\newtheorem{acknowledgement}{Acknowledgement}
\newtheorem{lemma}{Lemma}[section]
\newtheorem{Add in proof}{Add in proof}[section]
\newtheorem{prop}{Proposition}[section]
\newtheorem{defn}{Definition}[section]
\newtheorem{remark}{Remark}[section]
\newtheorem{theorem}{Theorem}
\def\R{{\Bbb R}}
\def\laplacian{\bigtriangleup}
 \def\a{\alpha}
\def\intave#1{-\kern-10.7pt\int_{\,#1}}
\def\b{\beta}
\def\g{\gamma}
\def\<{\langle}
\def\>{\rangle}
\def\({\left(}
\def\){\right)}
\def\limsup{\operatornamewithlimits{lim\,sup}}
\def\intave#1{-\kern-10.7pt\int_{\,#1}}
\begin{document}
\title{Blow-up criteria of strong solutions to
the  Ericksen-Leslie system  in $\Bbb R^3$}

\numberwithin{equation}{section}
\author{Min-Chun Hong, Jinkai Li and Zhouping Xin}

\address{Min-Chun Hong, Department of Mathematics, The University of Queensland\\
Brisbane, QLD 4072, Australia}

\address{Jinkai Li, The Institute of Mathematical Sciences,
The Chinese University of Hong Kong,  Shatin, N.T., Hong Kong}

\address{Zhouping Xin, The Institute of Mathematical Sciences,
The Chinese University of Hong Kong,  Shatin, N.T., Hong Kong}

\begin{abstract} In this paper, we establish
the local well-posedness and  blow-up criteria of
strong solutions to the Ericksen-Leslie system in $\Bbb R^3$ for
the well-known Oseen-Frank model. The local existence of
strong solutions to liquid crystal flows is obtained by using the
Ginzburg-Landau approximation approach to guarantee the
constraint that the direction vector of the fluid is of length one.
We establish four kinds of blow-up criteria, including (i) the Serrin type; (ii) the
Beal-Kato-Majda type; (iii) the mixed type, i.e., Serrin type condition for one field and Beal-Kato-Majda type condition on the other one; (iv) a new one, which characterizes the maximal existence time of the strong solutions to the Ericksen-Leslie system in terms of Serrin type norms of the strong solutions to the Ginzburg-Landau approximate system. Furthermore, we also prove that the strong solutions of the Ginzburg-Landau approximate system converge to the strong solution of the Ericksen-Leslie system up to the maximal existence time.
     \end{abstract}
\subjclass{AMS 35K50,  35Q30} \keywords{Liquid crystal flow, strong solutions, blow up criteria}

 \maketitle

\pagestyle{myheadings} \markright {The Ericksen-Leslie system}

\section{\bf Introduction}

The Ericksen-Leslie theory is successful
 in describing dynamic flows of liquid crystals in physics,  which is based on the fundamental Oseen-Frank
model.
Mathematically,  the static theory of nematic liquid crystals involves
a unit vector field $u$ in a region $\Omega\subset\R^3$. The
Oseen-Frank density $W (u,\nabla u)$ is given by
\begin{equation*}
W(u,\nabla u)=k_1(\text{div } u)^2+k_2 (u\cdot\text{curl }u)^2+k_3
|u\times \text{curl } u|^2+k_4[\text {tr} (\nabla u)^2-(\text {div
}u)^2],
\end{equation*}
where $k_1$, $k_2$, $k_3$ and $k_4$ are positive constants.
The free energy for a configuration $u\in H^1(\Omega ; S^2)$ is
\begin{equation*}
    E(u; \Omega )=\int_{\Omega} W(u, \nabla u)\,dx.
\end{equation*}
The Euler-Lagrange system for the Oseen-Frank energy
$E(u,\Omega)$ is:
\begin{align*}
 &\quad \nabla_{\a} \left [W_{p_{\a}^i} (u,\nabla u)- u^lu^i
W_{p_{\a}^l}(u,\nabla u) \right ]-W_{u^i}(u,\nabla
u)+W_{u^l}(u,\nabla u)u^l u^i \\
& +W_{p_{\a}^l}(u,\nabla u)\nabla_{\a}u^l u^i
+W_{p_{\a}^l}(u,\nabla u) u^l\nabla_{\a}u^i=0\quad \text {in
}\Omega \nonumber
  \end{align*}
 for $i=1,2,3$   (see \cite {HX}), where the standard summation convention is adopted.

Since the divergence of $\text {tr} (\nabla u)^2-(\text {div
}u)^2$ is free (\cite {GMS}), one can rewrite the density
$W(u,\nabla u)$ as
\begin{equation*}
W(u,\nabla u) = a|\nabla u|^2 +
V(u, \nabla u),\quad a =\min\{k_1, k_2,k_3\}>0,
\end{equation*}
where
$$V(u, \nabla u)=(k_1-a)(\text{div }
u)^2+(k_2-a) (u\cdot\text{curl }u)^2+(k_3-a) |u\times \text{curl }
u|^2. $$

Hardt, Kinderlehrer and Lin  in \cite {HKL1}  proved  that a
minimizer $u$ of the energy $E$ is smooth away from a closed set $\Sigma$ of
$\Omega$. Moreover, $\Sigma$ has Hausdorff dimension strictly less
than one.  See further contributions in \cite {GMS} and  \cite{Ho1} about the static theory of liquid crystals.

Dynamic motion of liquid
crystals are described by the Ericksen-Leslie system, including the
velocity vector $v$ and the direction vector $u$ of the fluid (see
\cite {Er} and \cite {Le}).  More precisely,  let
$v=(v^1,v^2,v^3)$ be the velocity vector of the fluid and
$u=(u^1,u^2,u^3)$ the unit direction vector. The Ericksen-Leslie
system is given by (e.g. \cite {LL1}
and \cite {LL2})
\begin{equation}
v^i_t+(v\cdot\nabla )v^i-\laplacian v^i+\nabla_{x_i} p=-
\nabla_{x_j}(\nabla_{x_i}u^kW_{p_j^k}(u, \nabla u)),  \label{1.5}
 \end{equation}
\begin{equation} \nabla \cdot v=0,  \label{1.6}
 \end{equation}
\begin{align} \label{1.7}
 u^i_t+(v\cdot \nabla  )u^i =&  \nabla_{\a} \left [W_{p_{\a}^i}
(u,\nabla u)- u^ku^i V_{p_{\a}^k}(u,\nabla u) \right
]-W_{u^i}(u,\nabla
u)\\
&+W_{u^k}(u,\nabla u)u^k u^i+W_{p_{\a}^l}(u,\nabla
u)\nabla_{\a}u^l u^i +V_{p_{\a}^k}(u,\nabla u) u^k\nabla_{\a}u^i\nonumber
  \end{align}
for $i=1,2,3$.
Here $\nu$, $\lambda$ are given positive
constants, and $p$ is the pressure.

The system (\ref{1.5})--(\ref{1.7}) is a system of the
Navier-Stokes equations coupled with the gradient flow for the
Oseen-Frank model, which is an extension of the harmonic map flow (\cite {ES}).  Caffarelli, Kohn and  Nirenberg \cite {CKN} established the fundamental result  on the existence and partial
regularity  of the global modified weak solutions of the Navier-Stokes equations (See also \cite{L}, \cite {TX}).
On the other hand,  Struwe  \cite{St} and  Chen-Struwe \cite{CS} established the existence and partial regularity of  global weak solutions of the harmonic map flow between manifolds.
There
is an interesting question to establish the global existence of
weak solutions of (\ref{1.5})-(\ref{1.7}) supplemented with initial or initial-boundary conditions.
The question for the case of $k_1=k_2=k_3$ was answered by
the first author in \cite {Ho3} in $\R^2$  and Lin-Lin-Wang \cite
{LLW} in a bounded domain of $\R^2$ independently. Recently,
the first and third authors \cite {HX} proved  the global existence of weak solutions of
the general Ericksen-Leslie system (\ref{1.5})--(\ref{1.7}) in $\R^2$. However, the
question on the global weak solution on the system in $3D$ is
still unknown. In the study of the Navier-Stokes
equations, there are two well-known blow-up
criteria for the strong (smooth) solutions: the Serrin (also called Ladyzhenskaya-Prodi-Serrin type) criterion \cite{Se} and the
Beal-Kato-Majda type criteria \cite{BKM}. Recently, for the simplified model, i.e.
$k_1=k_2=k_3$, the local strong solutions was obtained by Wen and
Ding \cite{WD}, and the blow up criterions were obtained by Huang and
Wang \cite{HW}, and there have been many new results developed in this direction \cite{HWW}.

In this paper, we consider the Cauchy problem to the Ericksen-Leslie system (\ref{1.5})--(\ref{1.7})
for the general Oseen-Frank model in $\R^3$. Suppose that the
initial data is given by
\begin{equation}\label{1.9}
u(x, 0)=u_0(x),\qquad v(x, 0)=v_0(x).
\end{equation}
Throughout this paper, we always assume that $(u_0, v_0)$ satisfies
\begin{eqnarray*}
&v_0\in H^1(\mathbb R^3),\quad\textmd{div}\,v_0=0,\quad u_0-b\in
H^2(\mathbb R^3),\quad|u_0|=1
\end{eqnarray*}
for some constant unit vector $b$.

In order to state our results, we give the definition of strong solutions and introduce some notations.

\begin{defn}
For any $T>0$, a couple $(u, v)$ is called a strong solution to the system (\ref{1.5})--(\ref{1.7}) in $\mathbb R^3\times(0, T)$ if and only if
\begin{eqnarray*}
&&u\in L^2(0, T; H_b^3(\mathbb R^3)),\quad\partial_tu\in L^2(0, T; H^1(\mathbb R^3)), \quad|u|=1,\\
&&v\in L^2(0, T; H_\sigma^2(\mathbb R^3)),\quad\partial_tv\in L^2(0, T; L^2(\mathbb R^3))
\end{eqnarray*}
and it satisfies the equation (\ref{1.5})--(\ref{1.7}) a.e. $(x, t)\in\mathbb R^3\times(0, T)$.
\end{defn}

\begin{defn}
A finite time $T^*>0$ is called the maximal existence time of a strong solution $(u, v)$ to the system (\ref{1.5})--(\ref{1.7}) if and only if $(u, v)$ is a strong solution in $\mathbb R^3\times(0, T)$ for all $T<T^*$ and
$$
\lim_{T\nearrow T^*}\|(\nabla u, v)\|_{L^2(0, T; H^2(\mathbb R^3))}=\infty.
$$
\end{defn}

The  maximal existence time  of the strong solution  to the approximate system (\ref{1.10})--(\ref{1.12}) can be defined similarly.
For $T>0$, we denote
\begin{align*}
&J_1(T)=\inf_{(q, r)\in\mathcal O}\|\nabla u\|_{L^q(\frac{T}{2}, T; L^r(\mathbb R^3))}+\inf_{(q, r)\in\mathcal O}\|v\|_{L^q(\frac{T}{2}, T; L^r(\mathbb R^3))},\\
&J_2(T)=\|\omega\|_{L^1(\frac{T}{2}, T; BMO(\mathbb R^3))}+\|\Delta u\|_{L^1(\frac{T}{2}, T; L^\infty(\mathbb R^3))},\\
&J_3(T)=\inf_{(q, r)\in\mathcal O}\|v\|_{L^q(\frac{T}{2}, T; L^r(\mathbb R^3))}+\|\Delta u\|_{L^1(\frac{T}{2}, T; L^\infty(\mathbb R^3))},\\
&J_4(T)=\|\omega\|_{L^1(\frac{T}{2}, T; BMO(\mathbb R^3))}+\inf_{(q, r)\in\mathcal O}\|\nabla u\|_{L^q(\frac{T}{2}, T; L^r(\mathbb R^3))},
\end{align*}
where $\omega=\nabla\times v$ and
$$
\mathcal O=\left\{(q, r)\in\mathbb R^2\left|\frac{2}{q}+\frac{3}{r}=1, q\in[2,\infty), r\in(3,\infty]\right.\right\}.
$$

Then, we have the following results on the local existence and blow up criteria of strong solutions to the system (\ref{1.5})--(\ref{1.9}).

\begin{theorem} \label{thm1}
The system (\ref{1.5})--(\ref{1.9})
has a unique strong solution $(u, v)$ in $\mathbb R^3\times(0, T^*)$ for some positive number $T^*$ depending only on the initial data. The maximal existence time $T^*<\infty$ can be described as
$$
J_1(T^*)=J_2(T^*)=J_3(T^*)=J_4(T^*)=\infty.
$$
Moreover, for any $T>0$, $J_1(T), J_2(T), J_3(T)$ and $J_4(T)$ are equivalent in the
following sense:
\begin{eqnarray*}
J_1(T)=\infty \Longleftrightarrow J_2(T)=\infty \Longleftrightarrow
J_3(T)=\infty \Longleftrightarrow J_4(T)=\infty.
\end{eqnarray*}
\end{theorem}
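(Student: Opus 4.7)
The plan is to proceed in three stages: (i) construct a unique local strong solution via a Ginzburg-Landau approximation; (ii) show that if any one of $J_1,\dots,J_4$ is finite at a time $T$, then $(u,v)$ extends past $T$; (iii) deduce both the maximal-time characterization and the equivalence of the four blow-up criteria as immediate corollaries of (ii).

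For stage (i) I would first solve the approximate system (\ref{1.10})--(\ref{1.12}), in which the constraint $|u|=1$ is relaxed by a penalty $\tfrac{1}{\varepsilon^{2}}(|u_\varepsilon|^{2}-1)u_\varepsilon$ in the director equation. For fixed $\varepsilon$, the relaxed system has no algebraic constraint, so a short-time strong solution $(u_\varepsilon,v_\varepsilon)$ exists by a standard contraction argument. The essential step is to derive $\varepsilon$-uniform bounds for $v_\varepsilon$ in $L^\infty_tH^1_x\cap L^2_tH^2_x$ and for $u_\varepsilon-b$ in $L^\infty_tH^2_x\cap L^2_tH^3_x$ on a time interval depending only on the initial data; I would do this by testing the $v$-equation with $-\Delta v_\varepsilon$ and the $u$-equation with $\Delta^2 u_\varepsilon$, exploiting cancellations between the Navier-Stokes stress and the Ericksen-Leslie elastic tensor together with the null-Lagrangian structure of $\operatorname{tr}(\nabla u)^2-(\operatorname{div}u)^2$. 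Aubin-Lions compactness then yields a strong solution of (\ref{1.5})--(\ref{1.7}); the constraint $|u|=1$ is recovered by passing to the limit in the evolution equation for $|u_\varepsilon|^2-1$, and uniqueness follows from a standard $L^2$-difference estimate.

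For stage (ii), fix $i$ and suppose $T^*<\infty$ with $J_i(T^*)<\infty$. The aim is to establish an a priori bound on $\sup_{t\in[T^*/2,T^*)}(\|\nabla u(t)\|_{H^1}^2+\|v(t)\|_{H^1}^2)+\int_{T^*/2}^{T^*}(\|\nabla u\|_{H^2}^2+\|v\|_{H^2}^2)\,dt$, which together with local existence applied near $T^*$ will contradict the definition of the maximal time. Testing the $v$-equation with $-\Delta v$ and the $u$-equation with $\Delta^2 u$ generates trilinear terms such as $\int(v\cdot\nabla)v\cdot\Delta v\,dx$ and $\int\nabla(v\cdot\nabla u)\cdot\nabla\Delta u\,dx$, plus quasilinear contributions from $V(u,\nabla u)$. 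For $J_1$, the Serrin scaling $2/q+3/r=1$ is precisely what H\"older and Gagliardo-Nirenberg require to absorb these into an integrable-in-time factor times the $H^2$-norms, closing Gronwall. For $J_2$, the Kozono-Taniuchi logarithmic inequality $\|\nabla v\|_{L^\infty}\le C(1+\|\omega\|_{BMO}\log(e+\|v\|_{H^2}))$ combined with $\|\Delta u\|_{L^1_tL^\infty_x}<\infty$ closes Gronwall in log-form. Cases $J_3$ and $J_4$ apply the two techniques separately to $v$ and to $\nabla u$. Stage (iii) is then immediate: if $J_i(T)<\infty$ for some $i$, then $(u,v)$ is regular on a neighborhood of $[T/2,T]$, so every $J_j(T)$ is automatically finite.

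The hardest part will be stage (ii) for the general Oseen-Frank model, where $V(u,\nabla u)$ produces genuinely quasilinear top-order terms in the director equation. To extract a coercive leading Laplacian I would exploit the null-Lagrangian structure together with $u\cdot\nabla u=0$ (from $|u|=1$) to rewrite the offending terms as commutator-type expressions controlled by lower-order quantities, in the spirit of \cite{HX}. Propagating these cancellations through the Ginzburg-Landau approximation of stage (i) will require extra care since $|u_\varepsilon|=1$ holds only approximately; the quantitative rate at which $|u_\varepsilon|^2\to 1$ has to be strong enough to preserve the structure of the top-order cancellations in the limit.
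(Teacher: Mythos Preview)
Your overall architecture matches the paper's, but two pieces are missing that the argument actually hinges on.

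In stage (i), the $\varepsilon$-uniform $H^2$ estimate requires more than testing against $\Delta^2 u_\varepsilon$. The paper also differentiates the director equation in time and tests against $\partial_t u_\varepsilon$; the penalty then throws off a sign-indefinite term $\int\tfrac{|u_\varepsilon|^2-1}{\varepsilon^2}|\partial_t u_\varepsilon|^2\,dx$ with an $\varepsilon^{-2}$ prefactor, and no mere rate of convergence of $|u_\varepsilon|^2\to 1$ will defeat this. The paper's device is the orthogonal decomposition $\partial_t u_\varepsilon = |u_\varepsilon|^{-2}(\partial_t u_\varepsilon\cdot u_\varepsilon)u_\varepsilon - |u_\varepsilon|^{-2}(\partial_t u_\varepsilon\times u_\varepsilon)\times u_\varepsilon$: the radial part pairs with the good term $\varepsilon^{-2}|\partial_t|u_\varepsilon|^2|^2$, while $\partial_t u_\varepsilon\times u_\varepsilon$ is $\varepsilon$-free because crossing (\ref{1.12}) with $u_\varepsilon$ annihilates the penalty. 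You flag the difficulty but do not supply this mechanism.

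In stage (ii), the BMO cases $J_2$ and $J_4$ do not close at the level you describe. The logarithmic inequality you quote, $\|\nabla v\|_{L^\infty}\le C\bigl(1+\|\omega\|_{BMO}\log(e+\|v\|_{H^2})\bigr)$, is false in $\mathbb R^3$: the logarithm must contain a supercritical norm, e.g.\ $\|\nabla^2 v\|_{L^p}$ with $p>3$, hence essentially $\|\nabla^3 v\|_{L^2}$ (this is the content of the paper's Lemma~\ref{lem4.2}). That quantity is \emph{not} produced by your single $H^1$-level energy identity. The paper therefore runs a second tier of estimates---testing with $\Delta^2 v$ and $-\Delta^3 u$ to control $\|(\nabla^3 u,\Delta v)\|_{L^\infty_tL^2_x}$ and $\|(\nabla^4 u,\nabla^3 v)\|_{L^2_{t,x}}$---and closes by a bootstrap: one chooses $s$ near $T^*$ with $\int_s^{T^*}[\omega]_{BMO}<\delta$, so that the log factor becomes $(1+f)^{C\delta}$ with $f$ the second-tier quantity; interpolation then gives an inequality of the form $f\le C(1+f)^{3/4+C\delta}$, which is sublinear for small $\delta$. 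Without this second level your Gronwall loop is open. A related point: the order-reduction that tames the quasilinear $V$-terms is not the null-Lagrangian structure but the pointwise identities $u\cdot\Delta u=-|\nabla u|^2$ and its higher analogue for $u^i\Delta^2 u^i$, both consequences of $|u|=1$.
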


The proof of Theorem \ref {thm1} is divided into two parts: local existence and blow-up criterion of the strong solution.
 For the proof of the local existence  of  the Ericksen-Leslie system,  the
main difficulty  is that  the  system (1.4)--(1.6) is not a standard parabolic system in the sense described in \cite {LSU} or \cite {Ei}. As a result, the
 constraint $|u|=1$ cannot be derived directly from the system by using the maximum principle. To overcome this difficulty, we follow  the same idea in \cite
{HX} to consider the approximating Ericksen-Leslie system
 in the following:
\begin{equation}v^i_t+(v\cdot\nabla )v^i-\laplacian v^i+\nabla_{x_i} p=
-\nabla_{x_j}(\nabla_{x_i}u^kW_{p_j^k}(u,\nabla u)),
\quad \label{1.10}
\end{equation}
\begin{equation} \nabla \cdot v=0,
\label{1.11}
\end{equation}
\begin{equation}
u^i_t+(v\cdot \nabla  )u^i =   \nabla_{\a} \left [W_{p_{\a}^i} (u,
\nabla u) \right ]-W_{u^i}(u,\nabla u)+\frac 1 {\varepsilon^2} u^i
(1-|u|^2) \label{1.12}
\end{equation}
for  $i=1,2,3$,  prescribing the initial condition
(\ref{1.9}). However, it should be noted that the condition that $ u_0\in H^1_b$ and $v_0\in L^2$ is insufficient to establish the  local existence of
the Ericksen-Leslie system in 3D. Instead we must assume that $ u_0\in H^2_b$ and $v_0\in H^1$. Under this condition, we can establish  uniform estimates in $\varepsilon$  on higher derivatives  of solutions $(u_{\varepsilon}, v_{\varepsilon})$ to the approximation system
(\ref{1.10})--(\ref{1.12}) in a short time and prove the local existence. In order to obtain such uniform estimates of $\int |\nabla^2 u_{\varepsilon}|^2+|\nabla v_{\varepsilon}|^2 +|\partial_t u_{\varepsilon}|^2$, the first key idea is to prove  that $|u_\varepsilon|$ is close to 1 as $\varepsilon$ goes to zero and the second key idea is  to control a difficult term term $\int\frac{1-|u_\varepsilon|^2}{\varepsilon^2}|\partial_t u_\varepsilon|^2$ by using the decomposition
$$
\partial_tu_\varepsilon=\frac{1}{|u_\varepsilon|^2}(\partial_tu_\varepsilon\cdot u_\varepsilon)u_\varepsilon-\frac{1}{|u_\varepsilon|^2}(\partial_tu_\varepsilon\times u_\varepsilon)\times u_\varepsilon.
$$
We note that $\partial_tu_\varepsilon\times u_\varepsilon$ is independent of $\varepsilon$ by equation (\ref{1.12}).

To establish the blow up criteria of the Ericksen-Leslie system, we need a prior estimates on high derivatives of the solution before the maximal existence time $T^*$. Two kinds of estimates are established, which roughly speaking involve the $L^\infty(H^1)$ and $L^\infty(H^2)$ bounds of $(v,\nabla u)$, respectively. One of the key ideas in establishing such estimates is using the constraint $|u|=1$ to handle the terms like $u\cdot\Delta^2u$ by reducing the order of the derivatives. In Theorem 1, we impose a Serrin type condition or Beal-Kato-Majda condition on $u$ or $v$. If we impose a Serrin type condition on the velocity field $v$, the $L^\infty(H^1)$-bounds on $(v,\nabla u)$ is sufficient for the proof, no matter what kind of condition is imposed on the direction field $u$. If imposing a Beal-Kato-Majda condition on the velocity field $v$, we have to analysis the  second kind estimate $L^\infty(H^2)$. In this case, a new logarithmic Sobolev type inequality is needed to control the $L^1([0,T];L^\infty (\R^3))$ norm of $\nabla v$ in term of its $L^1([0,T]; BMO(\R^3))$ and the norms of higher order derivatives.

\begin{remark} (i) $J_1(T^*)=\infty$ is a Serrin type condition  for both fields $u$ and $v$;
$J_2(T^*)=\infty$ is  a Beal-Kato-Majda type condition  for both fields; $J_3(T^*)=\infty$ and $J_4(T^*)=\infty$
are a Serin type condition for one field and a Beal-Kato-Majda type for the other one.

(ii) Recently, Huang-Wang \cite {HW} established the blow up
criterion  of the form
$$
\|\omega\|_{L^1_t(L^\infty_x)}+\|\nabla
u\|_{L^2_t(L^\infty_x)}=\infty,
$$
for the simplified model, which is a special case of $J_4$ in
Theorem \ref{thm1}.

(ii)
Theorem \ref{thm1} shows that the Serrin type condition is equivalent to the Beal-Kato-Majda type
in our case.
\end{remark}

 By comparing with the well-known result of Chen-Struwe \cite{CS} on the harmonic map flow, it is of interests to investigate the convergence  problem of solutions of the approximating system (\ref{1.10})--(\ref{1.12}). In fact, the approximating Ericksen-Leslie system (\ref{1.10})--(\ref{1.12}) was first introduced by Lin-Liu in \cite {LL1} through the Ginzburg-Landau approximation. They proved global existence of the
classical solution of the approximate system (\ref{1.10})--(\ref{1.12}) with
(\ref{1.9}) in dimension two and the weak solution
of the same system in dimension three for the case of
$k_1=k_2=k_3$. Since their estimates depends on the parameter $\varepsilon$ (also see \cite {LL2}), it is unknown whether as $\varepsilon\to 0$ the
 solutions $(u_{\varepsilon},v_{\varepsilon})$ of
(\ref{1.10})--(\ref{1.12}) converge to the solution of the
original Ericksen-Leslie system (\ref{1.5})--({\ref{1.7}). In this
paper, we can answered this problem and prove that  these strong solutions $(u_{\varepsilon},v_{\varepsilon})$ of the approximate system (\ref{1.10})--(\ref{1.12}) converge to the strong solution $(u, v)$ of the original Ericksen-Leslie system up to the maximal existence time of $(u, v)$. More precisely,  we have:

\begin{theorem}\label{thm2}
Let $(u, v)$ be a strong solution to the system (\ref{1.5})--(\ref{1.9}) in $\mathbb R^3\times(0, T^*)$. Let $(u_\varepsilon, v_\varepsilon)$ be the unique strong solution to the system (\ref{1.10})--(\ref{1.12}) in $\mathbb R^3\times(0, T_\varepsilon^*)$  with (\ref{1.9}), where $T_\varepsilon^*$ is the maximal existence time of  (\ref{1.10})--(\ref{1.12}).
Then for sufficiently small $\varepsilon$, $T_\varepsilon^*\geq T^*$ and
for any $T\in(0, T^*)$, it holds that
$$
(\nabla u_\varepsilon, v_\varepsilon)\rightarrow(\nabla u, v),\qquad\mbox{ in }{L^\infty(0, T; L^2(\mathbb R^3))\cap L^2(0,T; H^1(\mathbb R^3))}
$$
and
$$
\varlimsup_{\varepsilon\rightarrow0}(\|(\nabla u_\varepsilon, v_\varepsilon)\|_{L^\infty(0, T; H^1(\mathbb R^3))}+\|(\nabla u_\varepsilon, v_\varepsilon)\|_{W^{2,1}_2(\mathbb R^3\times(0, T))})<\infty,
$$
where $\|f\|_{W^{2,1}_2(\mathbb R^3\times(0, T))}=\|f\|_{L^2(0,T; H^2(\mathbb R^3))}+\|\partial_tf\|_{L^2(\mathbb R^3\times(0, T))}$.

Furthermore, $T^*<\infty$ is the maximal existence time if and only if
$$
\lim_{\varepsilon\rightarrow0}\|(\nabla u_\varepsilon, v_\varepsilon)\|_{L^q(0, T^*; L^r(\mathbb R^3))}=\infty
$$
for any $(q, r)\in \mathcal O$, with $\mathcal O$ being the same set stated as before.
\end{theorem}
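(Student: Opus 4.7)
The plan is threefold: obtain uniform-in-$\varepsilon$ estimates forcing $T_\varepsilon^*\geq T^*$, derive a difference estimate to get convergence on any $[0,T]\subset[0,T^*)$, and transfer Serrin-type norms across the convergence to obtain the maximal-time characterization.

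Step 1 (uniform bounds and $T_\varepsilon^*\geq T^*$). I fix $T<T^*$ and exploit the a priori estimates established in the proof of Theorem \ref{thm1} for $(u_\varepsilon,v_\varepsilon)$ on its existence interval (both $L^\infty H^1$ and $L^\infty H^2$ type on $(v_\varepsilon,\nabla u_\varepsilon)$). These bounds are, by construction, uniform in $\varepsilon$: the two delicate ingredients -- the near-unit-length of $u_\varepsilon$ and the control of $\int\frac{1-|u_\varepsilon|^2}{\varepsilon^2}|\partial_tu_\varepsilon|^2$ via the tangential/normal decomposition $\partial_tu_\varepsilon=\frac{1}{|u_\varepsilon|^2}(\partial_tu_\varepsilon\cdot u_\varepsilon)u_\varepsilon-\frac{1}{|u_\varepsilon|^2}(\partial_tu_\varepsilon\times u_\varepsilon)\times u_\varepsilon$ -- are $\varepsilon$-independent. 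A standard continuity argument opened by the zero initial difference (with the bounds of $(u,v)$ on $[0,T]$ as reference) propagates these estimates to all of $[0,T]$ for $\varepsilon$ small; the blow-up criterion of Theorem \ref{thm1} for the approximate system then forces $T_\varepsilon^*>T$. Letting $T\nearrow T^*$ gives $T_\varepsilon^*\geq T^*$, and the $\varlimsup$ bound of the statement is an immediate by-product.

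Step 2 (difference estimate). Set $w=u_\varepsilon-u$ and $\phi=v_\varepsilon-v$; subtracting (\ref{1.5})--(\ref{1.7}) from (\ref{1.10})--(\ref{1.12}) and testing the resulting system against $\phi$ and $-\Delta w$, the multilinear differences on the right-hand side are absorbed by the uniform bounds of Step 1 into quantities of the form $C(\|w\|_{H^1}^2+\|\phi\|_{H^1}^2)$. The decisive residual is
$$
R_\varepsilon=\frac{1}{\varepsilon^2}u_\varepsilon(1-|u_\varepsilon|^2)-\bigl\{u^ku^iV_{p_\alpha^k},\ldots\text{ (constraint terms present in (\ref{1.7}) but absent from (\ref{1.12}))}\bigr\}.
$$
The main obstacle is showing $R_\varepsilon\to 0$ in $L^2(0,T;L^2)$ despite the dangerous $\varepsilon^{-2}$ factor. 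This I would handle by testing (\ref{1.12}) against $u_\varepsilon(1-|u_\varepsilon|^2)$ and using $|u_\varepsilon(\cdot,0)|=1$ together with the Step-1 bounds to obtain a quantitative decay rate for $(1-|u_\varepsilon|^2)/\varepsilon$; the penalty then cancels the constraint terms in the limit once $|u_\varepsilon|\to 1$ is invoked. Gr\"onwall's inequality applied to the energy of $(w,\phi)$, whose initial datum vanishes, then delivers the convergence in $L^\infty(0,T;L^2)\cap L^2(0,T;H^1)$.

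Step 3 (maximal-time characterization). Suppose $T^*<\infty$. If for some $(q,r)\in\mathcal O$ the norm $\|(\nabla u_\varepsilon,v_\varepsilon)\|_{L^q(0,T^*;L^r)}$ stays bounded along some $\varepsilon_k\to 0$, then Step 2 combined with Fatou's lemma transfers the bound to $(\nabla u,v)$, so $J_1(T^*)<\infty$, contradicting the description of $T^*$ in Theorem \ref{thm1}. Conversely, if all Serrin norms of the approximations diverge as $\varepsilon\to 0$ on $[0,T^*]$, then $(u,v)$ cannot be continued past $T^*$ as a strong solution: any extension to $[0,T^*+\delta]$ would, by a repetition of Step 1 on that larger interval, furnish uniform-in-$\varepsilon$ Serrin bounds on the approximations there, contradicting the assumed divergence.
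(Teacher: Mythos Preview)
Your overall architecture and Step 3 are reasonable and close in spirit to the paper, but Step 2 contains a genuine gap, and the paper takes a completely different route to convergence.

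\textbf{The gap in Step 2.} The residual $R_\varepsilon$ is \emph{not} small in $L^2(0,T;L^2)$, and your proposed mechanism does not make it so. Testing (\ref{1.12}) against $u_\varepsilon(1-|u_\varepsilon|^2)$ yields at best the Ginzburg--Landau energy bound $\|(1-|u_\varepsilon|^2)/\varepsilon\|_{L^2}\leq C$, which is one power of $\varepsilon$ short: the penalty carries $\varepsilon^{-2}$. In fact $\frac{1}{\varepsilon^2}(1-|u_\varepsilon|^2)u_\varepsilon$ is merely bounded in $L^\infty_tL^2_x$ (via (\ref{L.2})) and converges to the \emph{nonzero} Lagrange multiplier enforcing $|u|=1$, i.e.\ precisely to the constraint terms in (\ref{1.7}). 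So $R_\varepsilon\to 0$ is not a decay statement but an identification of limits, and proving it directly is essentially the convergence you are after. A correct difference estimate would have to substitute the formula for the penalty obtained by dotting (\ref{1.12}) with $u_\varepsilon/|u_\varepsilon|^2$, recognize the result as the constraint terms of (\ref{1.7}) evaluated at $u_\varepsilon$ plus errors controlled by $|u_\varepsilon|^2-1$, and only then estimate the difference with the constraint terms at $u$ by $\|w\|_{H^1}$. You do not indicate this substitution, and your stated plan (``decay rate for $(1-|u_\varepsilon|^2)/\varepsilon$'') addresses the wrong scale.

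\textbf{How the paper proceeds instead.} The paper never subtracts the two systems. Convergence is obtained by \emph{compactness}: the uniform bounds (your Step 1, made precise as an iteration on time intervals of fixed length $T_M$ via Proposition \ref{prop} and Lemma \ref{lem4.3}) plus Aubin--Lions give $(\nabla u_\varepsilon,v_\varepsilon)\to(\nabla u,v)$ in $L^2(0,T;H^1(B_R))$ for every $R$; a local energy inequality with cutoff (Lemma \ref{lem5.2}) shows uniform smallness of $(\nabla u_\varepsilon,v_\varepsilon)$ outside large balls, and a Fr\'echet--Kolmogorov criterion upgrades this to convergence in $L^2(0,T;H^1(\mathbb R^3))$. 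The identification of the limit equation avoids the penalty altogether by crossing (\ref{1.12}) with $u_\varepsilon$ (which kills $\frac{1}{\varepsilon^2}(1-|u_\varepsilon|^2)u_\varepsilon$), passing to the limit, and then uncrossing using $|u|=1$. This is the structural trick that makes the singular term disappear; your difference estimate would need its algebraic analogue, which you have not supplied.
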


The key in the proof of  Theorem \ref{thm2} is to establish the
strong convergence and uniform estimates, which is divided in three steps: in step 1, we prove the strong convergence and uniform estimates up to a time $T_M$, where $M$ is a constant depending only on the initial data and $T$; in step 2, we show that if the strong convergence and uniform estimate hold true up to $T_1$ with $T_1<T$, then they hold true up to another time $T_2:=\min\{T, T_1+T_M\}$; in step 3, we prove the strong convergence and uniform estimate up to time $T$. To prove the strong convergence up to $T_M$, we need to derive high order estimates up to time $T_M$ and prove that the energy of $(u_\varepsilon, v_\varepsilon)$ is small outside a big ball uniformly for $\varepsilon$. High order estimates of these strong solutions are guaranteed by Proposition \ref{prop}, which, roughly speaking, states that the existence time and the uniform estimates of these strong solutions depend only on the $H^1$ bounds of the initial data $(\nabla u_{\varepsilon}(0), v_{\varepsilon}(0))$ and the $L^2$ bounds of $\frac{1-|u_{\varepsilon}(0)|^2}{\varepsilon^2}u_{\varepsilon}(0)$, while the uniform smallness outsider a big ball can be guaranteed by our Lemma \ref{lem4.2}, which is a local type of energy inequality. Using these two tools, we can prove the strong convergence of these solutions up to the time $T_M$. If the strong convergence and uniform estimate hold true up to time $T_1$ for some $T_1<T$, by the aid of the strong convergence and the uniform estimates up to time $T_1$, we show that the $H^1$ bounds of $(\nabla u_\varepsilon(T_1), v_\varepsilon(T_1))$ and the $L^2$ bounds of $\frac{1-|u_\varepsilon(T_1)|^2}{\varepsilon^2}u_{\varepsilon}(T_1)$ is controlled by $M$. As a result, starting from $T_1$ and taking $(u_\varepsilon(T_1), v_{\varepsilon}(T_1))$ as initial data, we obtain high order estimates up to time $T_2=\min\{T, T_1+T_M\}$. With this estimate in hand, using the same argument as in step 1, we can show the strong convergence up to $T_2$. Continuing this procedure, we prove the strong convergence up to $T$, and thus complete the proof of Theorem \ref{thm2}. By the aid of the strong convergence and uniform estimate, we can characterize the maximal existence time in term of the strong solutions to the Ginzburg-Landau system.

\begin{remark}
Theorem \ref{thm2} can be viewed as a blow up criterion of the strong solutions to the Ericksen-Leslie system (\ref{1.5})--(\ref{1.7}) in term of the Serrin type norms of the strong solutions to the Ginzburg-Landau approximation system (\ref{1.10})--(\ref{1.12}). It is a new kind of blow up criterion for the Ericksen-Leslie system even for the simplified case that $k_1=k_2=k_3$.
\end{remark}

The rest of the paper is organized as follows. In Section \ref{sec2}, we prove the local existence part of Theorem \ref{thm1};
the blow-up criteria part of of Theorem \ref{thm1} is proved in Section \ref{sec3}; Finally, we give the proof of Theorem \ref{thm2} in
Section \ref{sec4}.

\section{Local existence}\label{sec2}
\allowdisplaybreaks
In this section, we  prove the local existence of strong solutions to the Ericksen-Leslie system by using the Ginzburg-Landau approximation mentioned in Introduction. One can easily check that the following hold
\begin{eqnarray*}
&&W(z,p)\geq a|p|^2,\quad W_{p_\alpha^ip_\beta^j}(z,p)\xi_\alpha^i\xi_\beta^j\geq a|\xi|^2,\quad\forall z\in\mathbb R^3, p,\xi\in\mathbb M^{3\times3},\\
&&|W(u,\nabla u)|\leq C|u|^2|\nabla u|^2,\quad |W_{u^i}(u,\nabla u)|\leq C|u||\nabla u|^2,\\
&&|W_{u^iu^j}(u,\nabla u)|\leq C|\nabla u|^2,\quad |W_{p_\alpha^i}(u,\nabla u)|\leq C|u|^2|\nabla u|,\\
&&|W_{p_\alpha^i p_\beta^j}(u,\nabla u)|\leq C|u|^2,\quad |W_{u^ip_\beta^j}(u,\nabla u)|\leq C|u||\nabla u|.
\end{eqnarray*}
These inequalities will be used in the following text without any further mentions. For the Ginzburg-Landau approximate system (\ref{1.10})--(\ref{1.12}), the following local existence result holds.

\begin{lemma}\label{LLEM2.1}
Suppose that the initial data $(u_{0\varepsilon}, v_{0\varepsilon})$ satisfies
$$
u_{0\varepsilon}-b\in H^2(\mathbb R^3),\quad v_{0\varepsilon}\in H^1(\mathbb R^3),\quad\text{div}~v_{0\varepsilon}=0,
$$
where $b$ is a constant unit vector.
Then there is a positive number $T_\varepsilon^0$, such that the system (\ref{1.10})--(\ref{1.12}) with initial data $(u_{0\varepsilon}, v_{0\varepsilon})$ admits a unique solution $(u_\varepsilon, v_\varepsilon)$ on $\mathbb R^3\times(0, T_\varepsilon^0)$, satisfying
\begin{eqnarray*}
&&u_\varepsilon\in L^2(0, T_\varepsilon^0; H^3_b(\mathbb R^3)),\quad\partial_tu_\varepsilon\in L^2(0,T_\varepsilon^0; H^1(\mathbb R^3)),\\
&&v_\varepsilon\in L^2(0, T_\varepsilon^0; H^2(\mathbb R^3)),\quad\partial_tv_\varepsilon\in L^2(\mathbb R^3\times(0, T_\varepsilon^0)).
\end{eqnarray*}
\end{lemma}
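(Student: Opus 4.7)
The plan is to construct $(u_\varepsilon,v_\varepsilon)$ by a contraction-mapping iteration in the natural strong-solution space
\[
X_T := \{(u,v):\ u-b\in L^\infty(0,T;H^2)\cap L^2(0,T;H^3),\ \partial_t u\in L^2(0,T;H^1),\ v\in L^\infty(0,T;H^1)\cap L^2(0,T;H^2),\ \partial_t v\in L^2(0,T;L^2)\},
\]
normed in the obvious way. For parameters $R,T>0$ to be chosen, let $B_{R,T}\subset X_T$ denote the closed ball of radius $R$ around the constant-in-time extension of $(u_{0\varepsilon},v_{0\varepsilon})$. Given $(\tilde u,\tilde v)\in B_{R,T}$ with the correct initial trace, I would define $(u,v)=\Phi(\tilde u,\tilde v)$ as the solution of the linear decoupled system in which the nonlinearities are frozen at $(\tilde u,\tilde v)$: $v$ solves a non-stationary Stokes problem with source $-(\tilde v\cdot\nabla)\tilde v - \nabla_{x_j}\bigl(\nabla_{x_i}\tilde u^k\,W_{p_j^k}(\tilde u,\nabla\tilde u)\bigr)$, and $u$ solves the linear, uniformly parabolic system
\[
\partial_t u^i - W_{p_\alpha^i p_\beta^l}(\tilde u)\,\nabla_\alpha\nabla_\beta u^l = -(\tilde v\cdot\nabla)\tilde u^i - W_{p_\alpha^i u^k}(\tilde u,\nabla\tilde u)\nabla_\alpha\tilde u^k - W_{u^i}(\tilde u,\nabla\tilde u) + \varepsilon^{-2}\tilde u^i(1-|\tilde u|^2),
\]
with initial datum $u_{0\varepsilon}$. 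Note that $W_{pp}$ is a polynomial in $u$ alone (since $W$ is quadratic in $p$), so the leading coefficients depend only on $\tilde u$; by $H^2\hookrightarrow C^{0,1/2}$ in $\mathbb R^3$ they are H\"older continuous in $x$ and continuous in $t$, and the ellipticity constant is $a>0$.

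Maximal $L^2$-parabolic regularity for the Stokes semigroup and for linear parabolic systems with continuous, uniformly elliptic coefficients on $\mathbb R^3$ then supplies existence, uniqueness, and the $X_T$-bound $\|(u,v)\|_{X_T}\le C(\|u_{0\varepsilon}-b\|_{H^2}+\|v_{0\varepsilon}\|_{H^1}+\|F_v\|_{L^2(L^2)}+\|F_u\|_{L^2(H^1)})$, where $F_v, F_u$ denote the source terms above. Using the pointwise bounds on $W,V$ and their derivatives listed at the head of Section~\ref{sec2}, every nonlinearity reduces to a polynomial expression in $(\tilde u,\nabla\tilde u,\nabla^2\tilde u,\tilde v,\nabla\tilde v)$, which I would control via the 3D Sobolev embeddings $H^2\hookrightarrow L^\infty$, $H^1\hookrightarrow L^6$, interpolation between $L^\infty(H^2)\cap L^2(H^3)$ and between $L^\infty(H^1)\cap L^2(H^2)$, and H\"older in time. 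Each nonlinear source estimate carries a positive power of $T$, so choosing $R$ in terms of $\|u_{0\varepsilon}-b\|_{H^2}+\|v_{0\varepsilon}\|_{H^1}$ and then $T=T_\varepsilon^0$ sufficiently small (depending on $R$ and $\varepsilon^{-2}$) will force $\Phi(B_{R,T})\subset B_{R,T}$. The analogous computation applied to the difference $\Phi(\tilde u_1,\tilde v_1)-\Phi(\tilde u_2,\tilde v_2)$, measured in the weaker complete metric $L^\infty(0,T;L^2)\cap L^2(0,T;H^1)$, will show that $\Phi$ is a strict contraction for $T_\varepsilon^0$ small; Banach's fixed-point theorem then supplies the unique solution in $X_{T_\varepsilon^0}$. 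Uniqueness within the class of Lemma~\ref{LLEM2.1} follows by the same energy identity applied to two strong solutions with common initial data.

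The main obstacle is obtaining the $L^2(0,T;H^1)$ bound on $F_u$ required to extract $u\in L^2(H^3)$, in particular the borderline product $\nabla\tilde u\cdot\nabla^2\tilde u$ which appears after differentiating $W_u(\tilde u,\nabla\tilde u)$ or $W_{pu}(\tilde u,\nabla\tilde u)\nabla\tilde u$. In $\mathbb R^3$ this will be handled by pairing $\nabla\tilde u\in L^\infty_tL^6_x$ (from $\tilde u\in L^\infty(H^2)$) against $\nabla^2\tilde u\in L^2_tL^3_x$ (by interpolating $L^\infty_tL^2_x\cap L^2_tL^6_x$), producing an estimate with a gain of $T^{1/2}$. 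The Ginzburg-Landau penalty enters only as a lower-order polynomial with an $\varepsilon$-dependent constant and is readily absorbed, since $T_\varepsilon^0$ is allowed to depend on $\varepsilon$ in the statement.
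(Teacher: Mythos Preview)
Your approach is essentially the same as the paper's: the paper also invokes the contraction mapping principle based on the linearized system (Stokes for $\tilde v$ with frozen source, and a linear uniformly parabolic system for $\tilde u$ with principal coefficients $W_{p_\alpha^i}(u,\nabla\tilde u)$ frozen at the previous iterate), and then simply declares the remainder ``standard, and thus omitted.'' The only cosmetic difference is that the paper writes the $u$-equation in divergence form rather than your non-divergence form, which makes no material difference to the argument you outline.
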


\begin{proof}
We can apply  the standard contraction mapping principle based on the following linearized problem
\begin{equation*}
\tilde v^i_t-\laplacian \tilde v^i+\nabla_{i} P=
-\nabla_{j}(\nabla_{i}u^kW_{p_j^k}(u,\nabla u))-v\nabla v^i,
\end{equation*}
\begin{equation*} \nabla \cdot \tilde v=0,
\end{equation*}
\begin{equation*}
\tilde u^i_t-  \nabla_{\a} \left [W_{p_{\a}^i} (u,
\nabla \tilde u) \right ]=-W_{u^i}(u,\nabla u)-v\nabla  u^i +\frac 1 {\varepsilon^2} u^i
(1-|u|^2)
\end{equation*}
for  $i=1,2,3$. The argument is standard, and thus omitted.
\end{proof}

For strong solutions to the system (\ref{1.10})--(\ref{1.12}), it holds the following basic energy balance.

\begin{lemma}\label{LLEM2.2}
Let $(u_\varepsilon, v_\varepsilon)$ be a strong solution to the system (\ref{1.10})--(\ref{1.12}) in $\mathbb R^3\times (0, T)$.  Then
\begin{align*}
& \frac{d}{dt}\int_{\mathbb R^3}\left[\frac{|v_\varepsilon|^2}{2}+W(u_\varepsilon,\nabla u_\varepsilon)
+\frac{(1-|u_\varepsilon|^2)^2}{4\varepsilon^2}\right]dx\\&
+\int_{\mathbb R^3}(|\nabla v_\varepsilon|^2+|\partial_tu_\varepsilon +(v_\varepsilon\cdot\nabla)u_\varepsilon|^2)\,dt=0
\end{align*}
for any $t\in(0, T)$.
\end{lemma}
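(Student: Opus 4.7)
\smallskip
\noindent\textbf{Proof proposal for Lemma \ref{LLEM2.2}.}
The plan is the standard energy-type identity for Ericksen–Leslie systems: multiply the momentum equation by $v_\varepsilon$, multiply the direction-field equation by the material derivative $D_tu_\varepsilon:=\partial_tu_\varepsilon+(v_\varepsilon\cdot\nabla)u_\varepsilon$, integrate over $\mathbb R^3$, and show that the cross terms generated on the right-hand sides cancel. The strong-solution regularity stated in Lemma \ref{LLEM2.1} guarantees that every integration by parts below is admissible and that the pointwise chain-rule manipulations on $W(u_\varepsilon,\nabla u_\varepsilon)$ and on $(1-|u_\varepsilon|^2)^2$ hold in the sense of distributions in $t$.

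First I would multiply (\ref{1.10}) by $v_\varepsilon^i$ and integrate. Using $\nabla\cdot v_\varepsilon=0$ kills the convection term $\int v_\varepsilon^j\nabla_j v_\varepsilon^i\,v_\varepsilon^i$ and the pressure term; the Laplacian produces $\int|\nabla v_\varepsilon|^2$; integration by parts on the forcing yields
\begin{equation*}
\tfrac12\tfrac{d}{dt}\!\int|v_\varepsilon|^2+\!\int|\nabla v_\varepsilon|^2=\int W_{p_j^k}(u_\varepsilon,\nabla u_\varepsilon)\,\nabla_iu_\varepsilon^k\,\nabla_jv_\varepsilon^i\,dx=:I.
\end{equation*}

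Next I would multiply (\ref{1.12}) by $D_tu_\varepsilon^i$. The left-hand side becomes $\int|D_tu_\varepsilon|^2$. For the first term on the right I integrate by parts in $\alpha$ to obtain $-\int W_{p_\alpha^i}\nabla_\alpha(D_tu_\varepsilon^i)$, which splits into
\begin{equation*}
-\!\int W_{p_\alpha^i}\nabla_\alpha\partial_tu_\varepsilon^i-\!\int W_{p_\alpha^i}v_\varepsilon^j\nabla_\alpha\nabla_ju_\varepsilon^i-\!\int W_{p_\alpha^i}\nabla_\alpha v_\varepsilon^j\,\nabla_ju_\varepsilon^i.
\end{equation*}
Using the chain rule $\partial_tW=W_{u^i}\partial_tu_\varepsilon^i+W_{p_\alpha^i}\nabla_\alpha\partial_tu_\varepsilon^i$ and $\nabla_jW=W_{u^i}\nabla_ju_\varepsilon^i+W_{p_\alpha^i}\nabla_\alpha\nabla_ju_\varepsilon^i$ together with $\int v_\varepsilon^j\nabla_jW=-\int(\nabla\cdot v_\varepsilon)W=0$, the first two terms collapse to $-\tfrac{d}{dt}\!\int W(u_\varepsilon,\nabla u_\varepsilon)+\int W_{u^i}D_tu_\varepsilon^i$. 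The $+\int W_{u^i}D_tu_\varepsilon^i$ cancels exactly against the contribution of the $-W_{u^i}$ term in (\ref{1.12}).

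For the penalty term I would use $u_\varepsilon^iD_tu_\varepsilon^i=\tfrac12 D_t|u_\varepsilon|^2$ together with $(1-s)\partial_ts=-\tfrac12\partial_t(1-s)^2$ applied to $s=|u_\varepsilon|^2$, so that
\begin{equation*}
\int\tfrac{u_\varepsilon^i(1-|u_\varepsilon|^2)}{\varepsilon^2}\partial_tu_\varepsilon^i=-\tfrac{d}{dt}\!\int\tfrac{(1-|u_\varepsilon|^2)^2}{4\varepsilon^2},
\end{equation*}
while the convective piece $\int\tfrac{u_\varepsilon^i(1-|u_\varepsilon|^2)}{\varepsilon^2}v_\varepsilon^j\nabla_ju_\varepsilon^i=-\tfrac{1}{4\varepsilon^2}\int v_\varepsilon^j\nabla_j(1-|u_\varepsilon|^2)^2$ vanishes by $\nabla\cdot v_\varepsilon=0$. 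Collecting, the direction-field equation yields
\begin{equation*}
\tfrac{d}{dt}\!\int\!\Bigl[W(u_\varepsilon,\nabla u_\varepsilon)+\tfrac{(1-|u_\varepsilon|^2)^2}{4\varepsilon^2}\Bigr]+\int|D_tu_\varepsilon|^2=-\int W_{p_\alpha^i}\nabla_\alpha v_\varepsilon^j\,\nabla_ju_\varepsilon^i\,dx.
\end{equation*}

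Finally, adding the two identities I must verify that the right-hand sides cancel. A direct relabeling $(i,j,k)\mapsto(j,\alpha,i)$ shows $I=\int W_{p_\alpha^i}\nabla_\alpha v_\varepsilon^j\nabla_ju_\varepsilon^i$, so the cross terms match and sum to zero, producing exactly the claimed identity. The main obstacle is the bookkeeping of the Oseen--Frank cross terms and making sure the chain-rule/integration-by-parts steps on $W(u_\varepsilon,\nabla u_\varepsilon)$ and on $(1-|u_\varepsilon|^2)^2$ are justified; the regularity $u_\varepsilon\in L^2(H^3_b)$, $\partial_tu_\varepsilon\in L^2(H^1)$, $v_\varepsilon\in L^2(H^2)$, $\partial_tv_\varepsilon\in L^2(L^2)$ together with a standard mollification in space makes every manipulation rigorous and ensures the identity holds for a.e.\ $t\in(0,T)$.
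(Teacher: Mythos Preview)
Your proof is correct and follows exactly the paper's approach: multiply (\ref{1.10}) by $v_\varepsilon^i$, multiply (\ref{1.12}) by $\partial_tu_\varepsilon^i+(v_\varepsilon\cdot\nabla)u_\varepsilon^i$, integrate, and verify via the chain rule on $W(u_\varepsilon,\nabla u_\varepsilon)$ and index relabeling that the cross terms cancel. Your write-up is in fact more detailed than the paper's, which compresses the same computation into a single displayed chain of equalities.
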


\begin{proof}
Multiplying (\ref{1.10}) by $v_\varepsilon^i$ and (\ref{1.12}) by $\partial_tu_\varepsilon^i+v_\varepsilon\cdot\nabla u_\varepsilon^i$ respectively and then summing the resulting equations up and integrating over $\mathbb R^3$, we obtain
\begin{align*}
&\frac{d}{dt}\int_{\mathbb R^3}\left[\frac{|v_\varepsilon|^2}{2}+W(u_\varepsilon,\nabla u_\varepsilon)+\frac{(1-|u_\varepsilon|^2)^2}{4\varepsilon^2}\right]dx\\
&+\int_{\mathbb R^3}(|\nabla v_\varepsilon|^2+|\partial_tu_\varepsilon+(v_\varepsilon\cdot\nabla)u_\varepsilon|^2)dx\\
=&\int_{\mathbb R^3}[\nabla_iu_\varepsilon^kW_{p_j^k}(u_\varepsilon,\nabla u_\varepsilon)\nabla_jv_\varepsilon^i+v_\varepsilon\cdot \nabla u_\varepsilon^i(\nabla_\alpha(W_{p_\alpha^i}(u_\varepsilon,\nabla u_\varepsilon))\\
&-W_{u^i}(u_\varepsilon,\nabla u_\varepsilon))+v_\varepsilon\cdot\nabla u_\varepsilon^i\frac{1}{\varepsilon^2}(1-|u_\varepsilon|^2)u_\varepsilon^i]dx\\
=&\int_{\mathbb R^3}[\nabla_iu_\varepsilon^kW_{p_j^k}(u_\varepsilon,\nabla u_\varepsilon)\nabla_jv_\varepsilon^i-\nabla_\alpha v_\varepsilon\cdot\nabla u_\varepsilon^iW_{p_\alpha^i}(u_\varepsilon,\nabla u_\varepsilon)\\
&-v_\varepsilon^k\nabla_{k\alpha}u_\varepsilon^iW_{p_\alpha^i}(u_\varepsilon,\nabla u_\varepsilon)-v_\varepsilon^k\nabla_ku_\varepsilon^iW_{u_\varepsilon^i}(u_\varepsilon,\nabla u_\varepsilon)]dx\\
=&-\int_{\mathbb R^3} v_\varepsilon^k\nabla_k(W(u_\varepsilon,\nabla u_\varepsilon))dx=0,
\end{align*}
which proves the claim.
\end{proof}

The following high order estimate is one of key lemmas in this paper.

\begin{lemma}\label{lem5.1}
Let $(u_\varepsilon, v_\varepsilon)$ be a strong solution to the system (\ref{1.10})--(\ref{1.12}) in $\mathbb R^3\times(0, T)$. Assume that $\frac{3}{4}\leq|u_\varepsilon|\leq\frac{5}{4}$ on $\mathbb R^3\times(0, T)$. Then for any $t\in(0, T)$, it holds that
\begin{align*}
&\frac{d}{dt}\int_{\mathbb R^3}(|\Delta u_\varepsilon|^2+|\partial_t u_\varepsilon|^2+|\nabla v_\varepsilon|^2)dx+\int_{\mathbb R^3}\left(a|\nabla^3u_\varepsilon|^2+a|\nabla\partial_tu_\varepsilon|^2\right.\nonumber\\
&\left.+|\Delta v_\varepsilon|^2+|\partial_tv_\varepsilon|^2+\frac{1}{\varepsilon^2}|\Delta|u_\varepsilon|^2|^2+\frac{1}{\varepsilon^2}|\partial_t|u_\varepsilon|^2|^2\right)dx\nonumber\\
\leq&C\int_{\mathbb R^3}(|\nabla u_\varepsilon|^2+|v_\varepsilon|^2)(|\nabla^2u_\varepsilon|^2+|\partial_tu_\varepsilon|^2+|\nabla v_\varepsilon|^2)dx,
\end{align*}
where $C$ is a positive constant independent of $\varepsilon$.
\end{lemma}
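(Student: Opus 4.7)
The plan is to derive three energy identities---one for each of $\|\nabla v_\varepsilon\|_{L^2}^2$, $\|\partial_t u_\varepsilon\|_{L^2}^2$, and $\|\Delta u_\varepsilon\|_{L^2}^2$---and sum them. For the first, test the Stokes-like equation (\ref{1.10}) with $-\Delta v_\varepsilon$ and separately with $\partial_t v_\varepsilon$; the pressure drops by divergence-freeness, and adding the two contributions gives $\frac{d}{dt}\|\nabla v_\varepsilon\|_{L^2}^2+\|\Delta v_\varepsilon\|_{L^2}^2+\|\partial_t v_\varepsilon\|_{L^2}^2$ on the left, while the convection $(v_\varepsilon\cdot\nabla)v_\varepsilon$ and the stress coupling $\nabla_j(\nabla_i u_\varepsilon^k W_{p_j^k}(u_\varepsilon,\nabla u_\varepsilon))$ produce right-hand side contributions of the desired form via H\"older, Sobolev $H^1\hookrightarrow L^6$, and Young's inequality. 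For the second, differentiate (\ref{1.12}) in $t$ and test with $\partial_t u_\varepsilon$; the ellipticity $W_{p_\alpha^i p_\beta^j}\xi_\alpha^i\xi_\beta^j\geq a|\xi|^2$ yields the dissipation $a\|\nabla\partial_t u_\varepsilon\|_{L^2}^2$. For the third, apply $\nabla$ to (\ref{1.12}) and test with $-\nabla\Delta u_\varepsilon$ (equivalent to applying $\Delta$ and testing with $\Delta u_\varepsilon$); the same ellipticity yields $\frac{d}{dt}\|\Delta u_\varepsilon\|_{L^2}^2$ on the left and the dissipation $a\|\nabla^3 u_\varepsilon\|_{L^2}^2$.

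The central difficulty is the $\varepsilon$-uniform control of the Ginzburg-Landau contributions. The time-differentiated nonlinearity produces
\[
\frac{1}{\varepsilon^2}\int\bigl[(1-|u_\varepsilon|^2)|\partial_t u_\varepsilon|^2-2(u_\varepsilon\cdot\partial_t u_\varepsilon)^2\bigr]\,dx,
\]
which is the ``difficult term'' flagged in the introduction. Apply the orthogonal decomposition $|\partial_t u_\varepsilon|^2=|u_\varepsilon|^{-2}\bigl[\tfrac14(\partial_t|u_\varepsilon|^2)^2+|\partial_t u_\varepsilon\times u_\varepsilon|^2\bigr]$. The $(\partial_t|u_\varepsilon|^2)^2$ pieces combine into the positive contribution $\tfrac{3|u_\varepsilon|^2-1}{4|u_\varepsilon|^2\varepsilon^2}|\partial_t|u_\varepsilon|^2|^2$, whose coefficient is strictly bounded below by a positive constant under $|u_\varepsilon|\geq 3/4$, giving the target dissipation $\tfrac{1}{\varepsilon^2}|\partial_t|u_\varepsilon|^2|^2$. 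For the $|\partial_t u_\varepsilon\times u_\varepsilon|^2$ piece, the cross product of (\ref{1.12}) with $u_\varepsilon$ annihilates the Ginzburg-Landau term, so $\partial_t u_\varepsilon\times u_\varepsilon$ is $\varepsilon$-independent and pointwise bounded by $|\nabla^2 u_\varepsilon|+|\nabla u_\varepsilon|^2+|v_\varepsilon||\nabla u_\varepsilon|$. The singular prefactor $(1-|u_\varepsilon|^2)/\varepsilon^2$ is in turn controlled by dotting (\ref{1.12}) with $u_\varepsilon$:
\[
\frac{1-|u_\varepsilon|^2}{\varepsilon^2}=\frac{1}{|u_\varepsilon|^2}\bigl[\tfrac12\partial_t|u_\varepsilon|^2+\tfrac12(v_\varepsilon\cdot\nabla)|u_\varepsilon|^2-u_\varepsilon^i(\nabla_\alpha W_{p_\alpha^i}-W_{u^i})\bigr],
\]
pointwise bounded by $|\partial_t u_\varepsilon|+|v_\varepsilon||\nabla u_\varepsilon|+|\nabla^2 u_\varepsilon|+|\nabla u_\varepsilon|^2$ independently of $\varepsilon$. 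An analogous treatment applies to the Ginzburg-Landau residuals in the $\|\Delta u_\varepsilon\|_{L^2}^2$ identity: expanding $\Delta[u_\varepsilon^i(1-|u_\varepsilon|^2)]$ and using $u_\varepsilon\cdot\Delta u_\varepsilon=\tfrac12\Delta|u_\varepsilon|^2-|\nabla u_\varepsilon|^2$ isolates the dissipation $\tfrac{1}{2\varepsilon^2}\|\Delta|u_\varepsilon|^2\|_{L^2}^2$, and the identity above replaces the residual singular factors by $\varepsilon$-uniform expressions.

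The main obstacle is exactly this $\varepsilon$-uniformity: naive bounds on the Ginzburg-Landau nonlinearity scale as $\varepsilon^{-2}$, and their control hinges on converting them via the identity above into $\varepsilon$-independent derivative expressions. Once this is done, all residuals are of degree four (or reducible to degree four) in $\nabla u_\varepsilon,\nabla^2 u_\varepsilon,v_\varepsilon,\partial_t u_\varepsilon$, and H\"older, Sobolev, and Gagliardo-Nirenberg inequalities (in particular $\|\nabla^2 u_\varepsilon\|_{L^3}\leq C\|\nabla^2 u_\varepsilon\|_{L^2}^{1/2}\|\nabla^3 u_\varepsilon\|_{L^2}^{1/2}$ in $\mathbb R^3$) reduce every term either to the target form $\int(|\nabla u_\varepsilon|^2+|v_\varepsilon|^2)(|\nabla^2 u_\varepsilon|^2+|\partial_t u_\varepsilon|^2+|\nabla v_\varepsilon|^2)\,dx$ or to a small multiple of one of the dissipations $a\|\nabla^3 u_\varepsilon\|_{L^2}^2$, $a\|\nabla\partial_t u_\varepsilon\|_{L^2}^2$, $\|\Delta v_\varepsilon\|_{L^2}^2$, $\|\partial_t v_\varepsilon\|_{L^2}^2$, or the two Ginzburg-Landau dissipations, which can be absorbed on the left. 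Summing the three identities and collecting all dissipations yields the claim.
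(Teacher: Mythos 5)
Your treatment of the genuinely hard, $\varepsilon$-singular part is essentially the paper's own argument: the same three energy identities, the same pointwise bound on $(1-|u_\varepsilon|^2)/\varepsilon^2$ obtained from (\ref{1.12}) and $|u_\varepsilon|\geq 3/4$, the same parallel/orthogonal splitting of $\partial_t u_\varepsilon$, and the same observation that $\partial_t u_\varepsilon\times u_\varepsilon$ is $\varepsilon$-independent; your scalar identity $|\partial_tu_\varepsilon|^2=|u_\varepsilon|^{-2}[\tfrac14(\partial_t|u_\varepsilon|^2)^2+|\partial_tu_\varepsilon\times u_\varepsilon|^2]$ and the resulting coefficient $\tfrac{3|u_\varepsilon|^2-1}{4|u_\varepsilon|^2}$ are correct and reproduce, up to harmless constants, the paper's extraction of the dissipation $\varepsilon^{-2}|\partial_t|u_\varepsilon|^2|^2$.

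The gap is in the last reduction step. You claim that, once the Ginzburg--Landau factors are replaced by $\varepsilon$-independent expressions, ``H\"older, Sobolev, and Gagliardo--Nirenberg'' bring every residual either to the stated right-hand side or to an absorbable dissipation. That fails for the cubic terms $\int|\nabla^2u_\varepsilon|^3dx$ and $\int|\partial_tu_\varepsilon||\nabla^2u_\varepsilon|^2dx$ and for the quartics $\int|\nabla u_\varepsilon|^6dx$ and $\int|v_\varepsilon|^2|\nabla u_\varepsilon|^4dx$: interpolation (e.g.\ $\|\nabla^2u_\varepsilon\|_{L^3}^3\leq C\|\nabla^2u_\varepsilon\|_{L^2}^{3/2}\|\nabla^3u_\varepsilon\|_{L^2}^{3/2}$ followed by Young) leaves you with quantities such as $\|\nabla^2u_\varepsilon\|_{L^2}^6$ or $\|\partial_tu_\varepsilon\|_{L^2}^4\|\nabla^2u_\varepsilon\|_{L^2}^2$, which are \emph{not} dominated by $C\int(|\nabla u_\varepsilon|^2+|v_\varepsilon|^2)(|\nabla^2u_\varepsilon|^2+|\partial_tu_\varepsilon|^2+|\nabla v_\varepsilon|^2)dx$, so the lemma as stated is not obtained. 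What is needed (and what the paper does in (\ref{5.8})--(\ref{5.11})) is an integration by parts: shift one derivative to produce $|\nabla u_\varepsilon||\nabla^2u_\varepsilon||\nabla^3u_\varepsilon|$ or $|\nabla u_\varepsilon||\nabla^2u_\varepsilon||\nabla\partial_tu_\varepsilon|$, or integrate against the bounded function $u_\varepsilon-b$ for $|\nabla u_\varepsilon|^6$ and $|v_\varepsilon|^2|\nabla u_\varepsilon|^4$; only then does Young land exactly on the target form. A secondary point: by bounding \emph{both} factors of $|\partial_tu_\varepsilon\times u_\varepsilon|^2$ by spatial expressions you additionally create $\int|v_\varepsilon|^3|\nabla u_\varepsilon|^3dx$, which does not reduce to the target form by Young alone; the paper avoids it by keeping one factor as $((\partial_tu_\varepsilon\times u_\varepsilon)\times u_\varepsilon)\cdot\partial_tu_\varepsilon$ (so one $\partial_tu_\varepsilon$ survives) and integrating by parts in the terms $I_2$, $I_3$ rather than using the pointwise bound on $\nabla_\alpha(W_{p_\alpha})$ directly. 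With these integration-by-parts devices inserted, your outline closes; without them it proves only a weaker inequality.
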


\begin{proof}
Since $\frac{3}{4}\leq|u_\varepsilon|\leq \frac{5}{4}$ on $\mathbb R^3\times(0, T)$, it follows from (\ref{1.12}) that
\begin{align}
&\left|\frac{1}{\varepsilon^2}(1-|u_\varepsilon|^2)\right|\leq \frac 43 \nonumber\left|\frac{1}{\varepsilon^2}(1-|u_\varepsilon|^2)u_{\varepsilon}\right| \\
\leq &C|\partial_tu_\varepsilon+(v_\varepsilon\cdot\nabla)u_\varepsilon|+C|W_{p_\alpha^ip_\beta^j}(u_\varepsilon,\nabla u_\varepsilon)\nabla_{\alpha\beta}u_\varepsilon^j|\nonumber\\
&+C|W_{u_\varepsilon^jp_\alpha^i}(u_\varepsilon,\nabla u_\varepsilon)\nabla_\alpha u_\varepsilon^j|+C|W_{u^i}(u_\varepsilon,\nabla u_\varepsilon)|\nonumber\\
\leq&C|\partial_tu_\varepsilon+(v_\varepsilon\cdot\nabla)u_\varepsilon|+C|\nabla^2u_\varepsilon|+C|\nabla u_\varepsilon|^2.\label{L.2}
\end{align}

Differentiating (\ref{1.12}) in $x_\beta$, multiplying the resulting equation by $\nabla_\beta\Delta u_\varepsilon^i$ and integrating by parts, one obtains
\begin{align}
&\frac{1}{2}\frac{d}{dt}\int_{\mathbb R^3}|\Delta u_\varepsilon|^2dx-\int_{\mathbb R^3}\nabla_\beta (v_\varepsilon\nabla u_\varepsilon^i)\nabla_\beta\Delta u_\varepsilon^idx\nonumber\\
=&-\int_{\mathbb R^3}[\nabla_{\alpha\beta}(W_{p_\alpha^i}(u_\varepsilon,\nabla u_\varepsilon))-\nabla_\beta(W_{u^i}(u_\varepsilon,\nabla u_\varepsilon))]\nabla_\beta\Delta u_\varepsilon^i dx\nonumber\\
&-\int_{\mathbb R^3}\nabla_\beta\left[\frac{1}{\varepsilon^2}(1-|u_\varepsilon|^2)u_\varepsilon^i\right]\nabla_\beta\Delta u_\varepsilon^i dx. \label{L.2-1}
\end{align}

We will estimate the terms on the right hand side of (\ref{L.2-1}) term by term. Estimates on the first term can be found in \cite{HX}. For completeness, we outline it here. Recalling that $W(u, \nabla u)$ is quadratic in $u^k$ and $\nabla_iu^k$, one has
\begin{align*}
\nabla_{\g \b}W_{p_{\a}^i} (u_\varepsilon,\nabla u_\varepsilon)=& \nabla_{\g} [
W_{u^jp_{\a}^i}(u_\varepsilon,\nabla u_\varepsilon) \nabla_{\b}
u_\varepsilon^j+W_{p_{\a}^i}(u_\varepsilon,\nabla \nabla_{\b}u_\varepsilon) ]\\
=&W_{u^jp_{\a}^i}(u_\varepsilon,\nabla u_\varepsilon) \nabla_{\g\b}
u_\varepsilon^j+W_{u^jp_{\a}^i}(\nabla_\gamma u_\varepsilon,\nabla u_\varepsilon)\nabla_{\b}
u_\varepsilon^j\nonumber\\
&+W_{u^jp_{\a}^i}(u_\varepsilon,\nabla_\gamma\nabla u_\varepsilon)\nabla_{\b}
u_\varepsilon^j+W_{p_l^jp_{\a}^i}(u_\varepsilon,\nabla \nabla_{\b}u_\varepsilon) \nabla_{\b \g l}
u_\varepsilon^j\\
&+W_{u^jp_{\a}^i}(u_\varepsilon,\nabla \nabla_{\b}u_\varepsilon) \nabla_{\g} u_\varepsilon^j.
 \end{align*}
Since $W_{p_\alpha^ip_\beta^j}(z, u_\varepsilon)\xi_\alpha^i\xi_\beta^j\geq a|\xi|^2$ and
\begin{align*}
&|W_{u^jp_\alpha^i}(u_\varepsilon,\nabla u_\varepsilon)\nabla_{\gamma\beta}u_\varepsilon^j+W_{u^jp_{\a}^i}(\nabla_\gamma u_\varepsilon,\nabla u_\varepsilon)\nabla_{\b}
u_\varepsilon^j\\
+&W_{u^jp_\alpha^i}(u_\varepsilon,\nabla_\gamma\nabla u_\varepsilon)\nabla_\beta u_\varepsilon^j
+W_{u^jp_\alpha^i}(u_\varepsilon,\nabla_\beta\nabla u_\varepsilon)\nabla_\gamma u_\varepsilon^j|\leq C|\nabla u_\varepsilon||\nabla^2u_\varepsilon|,
\end{align*}
it follows that
\begin{align*}
-\int_{\mathbb R^3}W_{p_\alpha^ip_\beta^j}(u_\varepsilon,\nabla_\beta\nabla u_\varepsilon)\nabla_{\beta\gamma l}u_\varepsilon^j\nabla_{\gamma\beta\alpha}u_\varepsilon^idx
\leq-a\int_{\mathbb R^3}|\nabla^3u_\varepsilon|^2dx
\end{align*}
and
\begin{align*}
&\int_{\mathbb R^3}[W_{u^jp_\alpha^i}(u_\varepsilon,\nabla u_\varepsilon)\nabla_{\gamma\beta}u_\varepsilon^j+W_{u^jp_{\a}^i}(\nabla_\gamma u_\varepsilon,\nabla u_\varepsilon)\nabla_{\b}
u_\varepsilon^j\\
&+W_{u^jp_\alpha^i}(u_\varepsilon,\nabla_\gamma\nabla u_\varepsilon)\nabla_\beta u_\varepsilon^j
+W_{u^jp_\alpha^i}(u_\varepsilon,\nabla_\beta\nabla u_\varepsilon)\nabla_\gamma u_\varepsilon^j]\nabla_{\alpha\beta\gamma}u_\varepsilon^idx\nonumber\\
\leq&\eta\int_{\mathbb R^3}|\nabla^3u_\varepsilon|^2dx+C\int_{\mathbb R^3}(|\nabla u_\varepsilon|^2|\nabla^2u_\varepsilon|^2+|\nabla u_\varepsilon|^6)dx.
\end{align*}
Combining the above two inequalities yields
\begin{align}\label{2.5}
&-\int_{\R^3}\nabla_{\alpha\beta}(W_{p_{\a}^i} (u_\varepsilon,\nabla u_\varepsilon))
\nabla_{\b} \laplacian u_\varepsilon^i\,dx=-\int_{\R^3}\nabla^2_{\g \b}
W_{p_{\a}^i} (u_\varepsilon,\nabla u_\varepsilon)
\nabla^3_{\g \b \a} u_\varepsilon^i\,dx\nonumber\\
& \leq -(a-\eta)\int_{\R^3} |\nabla^3 u_\varepsilon|^2\,dx
+C\int_{\R^3}(|\nabla u_\varepsilon|^2|\nabla^2 u_\varepsilon|^2+|\nabla u_\varepsilon|^6)dx
\end{align}
for a sufficient small $\eta>0$.
Hence
\begin{align}
&\int_{\mathbb R^3}\nabla_\beta [W_{u^i}(u_\varepsilon,\nabla u_\varepsilon)]\nabla_\beta\Delta u_\varepsilon^idx\nonumber\\
=&\int_{\mathbb R^3}[W_{u^iu^j}(u_\varepsilon,\nabla u_\varepsilon)\nabla_\beta u_\varepsilon^j+W_{u^ip_\alpha^j}(u_\varepsilon,\nabla u_\varepsilon)\nabla_{\alpha\beta}u_\varepsilon^j]\nabla_\beta\Delta u_\varepsilon^idx\nonumber\\
\leq&\eta\int_{\mathbb R^3}|\nabla^3u_\varepsilon|^2dx+C\int_{\mathbb R^3}(|\nabla u_\varepsilon|^2|\nabla^2u_\varepsilon|^2+|\nabla u_\varepsilon|^6)dx.\label{2.6}
\end{align}

Now we estimate the second term on the right hand side of (\ref{L.2-1}).
Direct calculations give
\begin{align*}
u_\varepsilon^i\Delta^2u_\varepsilon^i=&\Delta(u_\varepsilon^i\Delta u_\varepsilon^i)-2\nabla u_\varepsilon^i\Delta\nabla u_\varepsilon^i-|\Delta u_\varepsilon^i|^2\\
=&\Delta\left(\frac{1}{2}\Delta|u_\varepsilon^i|^2-|\nabla u_\varepsilon^i|^2\right)-2\nabla u_\varepsilon^i\Delta\nabla u_\varepsilon^i-|\Delta u_\varepsilon^i|^2\\
=&\frac{1}{2}\Delta^2|u_\varepsilon^i|^2-2\nabla u_\varepsilon^i\nabla\Delta u_\varepsilon^i-2|\nabla^2u_\varepsilon^i|^2-2\nabla u_\varepsilon^i\Delta\nabla u_\varepsilon^i-|\Delta u_\varepsilon^i|^2\\
=&\frac{1}{2}\Delta^2|u_\varepsilon^i|^2-4\nabla u_\varepsilon^i\nabla\Delta u_\varepsilon^i-2|\nabla^2u_\varepsilon^i|^2-|\Delta u_\varepsilon^i|^2.
\end{align*}
Due to (\ref{L.2}), one has
\begin{align}
&-\int_{\mathbb R^3}\nabla_\beta\left[\frac{1}{\varepsilon^2}(1-|u_\varepsilon|^2)u_\varepsilon^i\right]\nabla_\beta\Delta u_\varepsilon^i dx=\int_{\mathbb R^3}\frac{|u_\varepsilon|^2 -1}{\varepsilon^2}u_\varepsilon^i\Delta^2 u_\varepsilon^idx\nonumber\\
=&\int_{\mathbb R^3}\frac{|u_\varepsilon|^2 -1}{\varepsilon^2}\left(\frac{1}{2}\Delta^2|u_\varepsilon^i|^2-4\nabla u_\varepsilon^i\nabla\Delta u_\varepsilon^i-2|\nabla^2u_\varepsilon^i|^2-|\Delta u_\varepsilon^i|^2\right)dx\nonumber\\
=&-\frac{1}{2\varepsilon^2}\int_{\mathbb R^3}|\Delta|u_\varepsilon|^2|^2dx+\int_{\mathbb R^3}\frac{|u_\varepsilon|^2 -1}{\varepsilon^2}(4\nabla u_\varepsilon^i\nabla\Delta u_\varepsilon^i\nonumber\\
&+2|\nabla^2u_\varepsilon^i|^2+|\Delta u_\varepsilon^i|^2)dx\nonumber\\
\leq&\eta\int_{\mathbb R^3}|\nabla\Delta u_\varepsilon|^2dx-\frac{1}{2\varepsilon^2}\int_{\mathbb R^3}|\Delta|u_\varepsilon|^2|^2dx+C\int_{\mathbb R^3}[|\nabla u_\varepsilon|^2(|\partial_tu+v_\varepsilon\nabla u_\varepsilon|^2\nonumber\\
&+|\nabla u_\varepsilon|^4+|\nabla^2u_\varepsilon|^2)+(|\partial_tu_\varepsilon+v_\varepsilon\nabla u_\varepsilon|+|\nabla^2u_\varepsilon|)|\nabla^2u_\varepsilon|^2]dx.\label{2.7}
\end{align}

Substituting (\ref{2.5})--(\ref{2.7}) into (\ref{L.2-1}) leads to
\begin{align}
&\frac{d}{dt}\int_{\mathbb R^3}|\Delta u_\varepsilon|^2dx+\int_{\mathbb R^3}\left(\frac{3a}{2}|\nabla^3u_\varepsilon|^2+\frac{1}{\varepsilon^2}|\Delta|u_\varepsilon|^2|^2\right)dx\nonumber\\
\leq&2\int_{\mathbb R^3}\nabla_\beta[(v_\varepsilon\cdot\nabla) u_\varepsilon]\nabla_\beta\Delta u_\varepsilon dx +C\int_{\mathbb R^3}[(|\partial_tu_\varepsilon+v_\varepsilon\nabla u_\varepsilon|+|\nabla^2u_\varepsilon|)|\nabla^2u_\varepsilon|^2\nonumber\\
&+|\nabla u_\varepsilon|^2(|\partial_tu_\varepsilon+v_\varepsilon\nabla u_\varepsilon|^2+|\nabla u_\varepsilon|^4+|\nabla^2u_\varepsilon|^2)]dx.\label{5.1-0}
\end{align}
Then it follows from Young inequality that
\begin{align}
&\frac{d}{dt}\int_{\mathbb R^3}|\Delta u_\varepsilon|^2dx+\int_{\mathbb R^3}\left(a|\nabla^3u_\varepsilon|^2+\frac{1}{\varepsilon^2}|\Delta|u_\varepsilon|^2|^2\right)dx\nonumber\\
\leq&C\int_{\mathbb R^3}[(|\nabla u_\varepsilon|^2+|v_\varepsilon|^2)(|\nabla^2u_\varepsilon|^2+|\partial_tu_\varepsilon|^2+|\nabla v_\varepsilon|^2)\nonumber\\
&+|\nabla^2u_\varepsilon|^2(|\nabla^2u_\varepsilon|+|\partial_tu_\varepsilon|)+|v_\varepsilon|^2|\nabla u_\varepsilon|^4+|\nabla u_\varepsilon|^6]dx.\label{5.1}
\end{align}

Differentiating equation (\ref{1.12}) with respect to $t$, multiplying the resulting equation by $\partial_tu_\varepsilon^i$ and integrating over $\mathbb R^3$, recalling (\ref{L.2}), we have
\begin{align*}
&\frac{d}{dt}\int_{\mathbb R^3}\frac{|\partial_tu_\varepsilon^i|^2}{2}dx+\int_{\mathbb R^3}W_{p_\alpha^ip_\beta^j}(u_\varepsilon,\nabla u_\varepsilon)\partial_t\nabla_\beta u_\varepsilon^j\partial_t\nabla_\alpha u_\varepsilon^idx\\
=&-\int_{\mathbb R^3}[W_{u^jp_\alpha^i}(u_\varepsilon,\nabla u_\varepsilon)\partial_tu_\varepsilon^j\nabla\partial_tu_\varepsilon^i+(v_\varepsilon\nabla\partial_tu_\varepsilon^i+\partial_tv_\varepsilon\nabla u_\varepsilon^i)\partial_tu_\varepsilon^i\\
&+W_{u^iu^j}(u_\varepsilon,\nabla u_\varepsilon)\partial_tu_\varepsilon^i\partial_tu_\varepsilon^j+W_{u^ip_\beta^j}(u_\varepsilon,\nabla u_\varepsilon)\nabla\partial_tu_\varepsilon^j\partial_tu_\varepsilon^i\\
&+\frac{1}{2\varepsilon^2}|\partial_t|u_\varepsilon|^2|^2+\frac{1}{\varepsilon^2}(|u_\varepsilon|^2-1)|\partial_tu_\varepsilon^i|^2]dx.
\end{align*}
This, together with $W_{p_\alpha^ip_\beta^j}(z,p)|\xi|^2\geq a|\xi|^2$ and $\int_{\mathbb R^3}v_\varepsilon\nabla\partial_tv_\varepsilon^i\partial_tv_\varepsilon^idx=0$, shows that
\begin{align}
&\frac{d}{dt}\int_{\mathbb R^3}|\partial_tu_\varepsilon|^2dx+\int_{\mathbb R^3}\left[\frac{3a}{2}|\nabla\partial_tu_\varepsilon|^2+\frac{|\partial_t|u_\varepsilon|^2|^2}{\varepsilon^2}+\frac{2(|u_\varepsilon|^2-1)}{\varepsilon^2}|\partial_tu_\varepsilon|^2\right]dx\nonumber\\
\leq&\eta\int_{\mathbb R^3}|\partial_tv_\varepsilon|^2dx+C\int_{\mathbb R^3}|\nabla u_\varepsilon|^2|\partial_tu_\varepsilon|^2dx.\label{5.2}
\end{align}

Due to the identity
$$
\partial_tu_\varepsilon=|u_\varepsilon|^{-2}(\partial_tu_\varepsilon\cdot u_\varepsilon)u_\varepsilon-|u_\varepsilon|^{-2}(\partial_tu_\varepsilon\times u_\varepsilon)\times u_\varepsilon,
$$
it holds
\begin{align}
&\int_{\mathbb R^3}\left[\frac{|\partial_t|u_\varepsilon|^2|^2}{\varepsilon^2}+\frac{2(|u_\varepsilon|^2-1)}{\varepsilon^2}|\partial_tu_\varepsilon|^2\right]dx\nonumber\\
=&\int_{\mathbb R^3}\left[\frac{|\partial_t|u_\varepsilon|^2|^2}{\varepsilon^2}+\frac{(|u_\varepsilon|^2-1)}{2\varepsilon^2|u_\varepsilon|^2}|\partial_t|u_\varepsilon|^2|^2\right.\nonumber\\
&-\left.\frac{2(|u_\varepsilon|^2-1)}{\varepsilon^2|u_\varepsilon|^2}((\partial_tu_\varepsilon\times u_\varepsilon)\times u_\varepsilon)\cdot\partial_tu_\varepsilon\right]dx\nonumber\\
\geq&\int_{\mathbb R^3}\left[\frac{|\partial_t|u_\varepsilon|^2|^2}{2\varepsilon^2}
-\frac{2(|u_\varepsilon|^2-1)}{\varepsilon^2|u_\varepsilon|^2}((\partial_tu_\varepsilon\times u_\varepsilon)\times u_\varepsilon)\cdot\partial_tu_\varepsilon\right]dx,\label{5.3}
\end{align}
where in the last step, the assumption $|u_\varepsilon|\geq\frac{3}{4}\geq\frac{1}{\sqrt 2}$ has been used.

Now, we estimate the term $\int_{\mathbb R^3}\frac{2(|u_\varepsilon|^2-1)}{\varepsilon^2|u_\varepsilon|^2}((\partial_tu_\varepsilon\times u_\varepsilon)\times u_\varepsilon)\cdot\partial_tu_\varepsilon dx$ in (\ref{5.3}).
It follows (\ref{1.12}) that
\begin{align*}
\frac{(|u_\varepsilon|^2-1)}{\varepsilon^2|u_\varepsilon|^2}=&\frac{-2}{|u_\varepsilon|^4}\left(\frac{1}{2}\partial_t|u_\varepsilon|^2
-\nabla_\alpha(W_{p_\alpha}(u_\varepsilon,\nabla u_\varepsilon))\cdot u_\varepsilon\right.\\
&+(W_u(u_\varepsilon,\nabla u_\varepsilon)+(v_\varepsilon\cdot \nabla )  u_\varepsilon)\cdot u_\varepsilon\Big),
\end{align*}
and thus
\begin{align*}
&\int_{\mathbb R^3}\frac{2(|u_\varepsilon|^2-1)}{\varepsilon^2|u_\varepsilon|^2}{((\partial_tu_\varepsilon\times u_\varepsilon)\times u_\varepsilon)\cdot\partial_tu_\varepsilon}dx\\
=&\int_{\mathbb R^3}\frac{-2}{|u_\varepsilon|^4}\left(\frac{1}{2}\partial_t|u_\varepsilon|^2-\nabla_\alpha(W_{p_\alpha}(u_\varepsilon,\nabla u_\varepsilon))\cdot u_\varepsilon\right.\\
&+(W_u(u_\varepsilon,\nabla u_\varepsilon)+v_\varepsilon\nabla u_\varepsilon)\cdot u_\varepsilon\Large)[((\partial_tu_\varepsilon\times u_\varepsilon)\times u_\varepsilon)\cdot\partial_tu_\varepsilon]dx\\
=&\int_{\mathbb R^3}\frac{-2}{|u_\varepsilon|^4}(W_u(u_\varepsilon,\nabla u_\varepsilon)+v_\varepsilon\nabla u_\varepsilon)\cdot u_\varepsilon((\partial_tu_\varepsilon\times u_\varepsilon)\times u_\varepsilon)\cdot\partial_tu_\varepsilon dx\\
&+\int_{\mathbb R^3}\frac{2}{|u_\varepsilon|^4}\nabla_\alpha(W_{p_\alpha}(u_\varepsilon,\nabla u_\varepsilon))\cdot u_\varepsilon((\partial_tu_\varepsilon\times u_\varepsilon)\times u_\varepsilon)\cdot\partial_tu_\varepsilon dx\\
&+\int_{\mathbb R^3}\frac{-1}{|u_\varepsilon|^4}\partial_t|u_\varepsilon|^2((\partial_tu_\varepsilon\times u_\varepsilon)\times u_\varepsilon)\cdot\partial_tu_\varepsilon dx\\
=&I_1+I_2+I_3.
\end{align*}

To estimate $I_1$ and $I_2$, we have
$$
I_1\leq C\int_{\mathbb R^3}(|v_\varepsilon|^2+|\nabla u_\varepsilon|^2)|\partial_tu_\varepsilon|^2dx,
$$
and
\begin{align*}
I_2=&-\int_{\mathbb R^3}W_{p_\alpha}(u_\varepsilon,\nabla u_\varepsilon)\nabla_\alpha\left(\frac{2}{|u_\varepsilon|^4}u_\varepsilon((\partial_tu_\varepsilon\times u_\varepsilon)\times u_\varepsilon)\cdot\partial_tu_\varepsilon\right)dx\\
\leq&C\int_{\mathbb R^3}|\nabla u_\varepsilon|(|\partial_tu_\varepsilon||\nabla\partial_tu_\varepsilon|+|\nabla u_\varepsilon||\partial_tu_\varepsilon|^2)dx\\
\leq&\eta\int_{\mathbb R^3}|\nabla\partial_tu_\varepsilon|^2dx+C\int_{\mathbb R^3}|\nabla u_\varepsilon|^2|\partial_tu_\varepsilon|^2dx.
\end{align*}
To estimate $I_3$, we use (\ref{1.12}) to obtain
$$
\partial_tu_\varepsilon\times u_\varepsilon=[\nabla_\alpha( W_{p_\alpha}(u_\varepsilon,\nabla u_\varepsilon))-W_u(u_\varepsilon,\nabla u_\varepsilon)-(v_\varepsilon\cdot\nabla u_\varepsilon)]\times u_\varepsilon.
$$
Thus
\begin{align*}
I_3=&\int_{\mathbb R^3}\frac{-1}{|u_\varepsilon|^4}\partial_t|u_\varepsilon|^2[(\nabla_\alpha(W_{p_\alpha}(u_\varepsilon,\nabla u_\varepsilon))-W_u(u_\varepsilon,\nabla u_\varepsilon)\\
&-(v_\varepsilon\cdot\nabla) u_\varepsilon)\times u_\varepsilon\times u_\varepsilon]\cdot\partial_tu_\varepsilon dx\\
\leq&C\int_{\mathbb R^3}[|W_{p_\alpha}(u_\varepsilon,\nabla u_\varepsilon)|(|\nabla u_\varepsilon||\partial_tu_\varepsilon|^2+|\partial_t|u_\varepsilon|^2||\nabla\partial_tu_\varepsilon|\\
&+|\nabla\partial_t|u_\varepsilon|^2||\partial_tu_\varepsilon|)+|\partial_t|u_\varepsilon|^2||\partial_tu_\varepsilon|(|\nabla u_\varepsilon|^2+|v_\varepsilon|^2)]dx\\
\leq&C\int_{\mathbb R^3}[|\nabla u_\varepsilon|(|\nabla u_\varepsilon||\partial_tu_\varepsilon|^2+|\partial_tu_\varepsilon||\nabla\partial_tu_\varepsilon|)+|\partial_tu_\varepsilon|^2(|\nabla u_\varepsilon|^2+|v_\varepsilon|^2)]dx\\
\leq&\eta\int_{\mathbb R^3}|\nabla\partial_tu_\varepsilon|^2dx+C\int_{\mathbb R^3}(|\nabla u_\varepsilon^2+|v_\varepsilon|^2)|\partial_tu_\varepsilon|^2dx.
\end{align*}
Combining above estimates of $I_1, I_2, I_3$ shows
\begin{align*}
&\int_{\mathbb R^3}\frac{2(|u_\varepsilon|^2-1)}{\varepsilon^2|u_\varepsilon|^2}{((\partial_tu_\varepsilon\times u_\varepsilon)\times u_\varepsilon)\cdot\partial_tu_\varepsilon}dx\nonumber\\
\leq&\eta\int_{\mathbb R^3}|\nabla\partial_tu_\varepsilon|^2dx+C\int_{\mathbb R^3}(|\nabla u_\varepsilon^2+|v_\varepsilon|^2)|\partial_tu_\varepsilon|^2dx,
\end{align*}
which, together with (\ref{5.2})--(\ref{5.3}), shows
\begin{align}
&\frac{d}{dt}\int_{\mathbb R^3}|\partial_tu_\varepsilon|^2dx+\int_{\mathbb R^3}\left(a|\nabla\partial_tu_\varepsilon|^2+\frac{1}{\varepsilon^2}|\partial_t|u_\varepsilon|^2|^2\right)dx\nonumber\\
\leq&\eta\int_{\mathbb R^3}|\partial_tv_\varepsilon|^2dx+C\int_{\mathbb R^3}(|\nabla u_\varepsilon|^2+|v_\varepsilon|^2)|\partial_tu_\varepsilon|^2dx. \label{5.4}
\end{align}

Multiplying equation (\ref{1.10}) by $\partial_t v^i_\varepsilon-\Delta v_\varepsilon^i$ and integrating over $\mathbb R^3$ yields
\begin{align*}
&\frac{d}{dt}\int_{\mathbb R^3}|\nabla v_\varepsilon|^2dx+\int_{\mathbb R^3}(|\partial_tv_\varepsilon|^2+|\Delta v_\varepsilon|^2)dx\nonumber\\
=&-\int_{\mathbb R^3}(v_\varepsilon\cdot\nabla v_\varepsilon^i+\nabla_j(\nabla_iu_\varepsilon^kW_{p_j^k}(u_\varepsilon,\nabla u_\varepsilon)))(\partial_t{v_\varepsilon^i}-\Delta {v_\varepsilon^i})dx\nonumber\\
\leq&\eta\int_{\mathbb R^3}(|\partial_t{v_\varepsilon^i}|^2+|\Delta {v_\varepsilon^i}|^2)dx+C\int_{\mathbb R^3}(|v_\varepsilon|^2|\nabla v_\varepsilon|^2\\
&+|\nabla u_\varepsilon|^2|\nabla^2u_\varepsilon|^2+|\nabla u_\varepsilon|^6)dx
\end{align*}
for a sufficient small $\eta>0$. Therefore
\begin{align}
&\frac{d}{dt}\int_{\mathbb R^3}|\nabla v_\varepsilon|^2dx+\frac{3}{4}\int_{\mathbb R^3}(|\partial_tv_\varepsilon|^2+|\Delta v_\varepsilon|^2)dx\nonumber\\
\leq&C\int_{\mathbb R^3}(|v_\varepsilon|^2|\nabla v_\varepsilon|^2+|\nabla u_\varepsilon|^2|\nabla^2u_\varepsilon|^2+|\nabla u_\varepsilon|^6)dx.\label{5.5}
\end{align}

It follows from (\ref{5.1}), (\ref{5.4}) and (\ref{5.5}) that
\begin{align}
&\frac{d}{dt}\int_{\mathbb R^3}(|\Delta u_\varepsilon|^2+|\partial_t u_\varepsilon|^2+|\nabla v_\varepsilon|^2)dx+\int_{\mathbb R^3}\left(a|\nabla^3u_\varepsilon|^2\right.\nonumber\\
&\left.+a|\nabla\partial_tu_\varepsilon|^2+|\Delta v_\varepsilon|^2+|\partial_tv_\varepsilon|^2+\frac{1}{\varepsilon^2}|\Delta|u_\varepsilon|^2|^2+\frac{1}{\varepsilon^2}|\partial_t|u_\varepsilon|^2|^2\right)dx\nonumber\\
\leq&C\int_{\mathbb R^3}[(|\nabla u_\varepsilon|^2+|v_\varepsilon|^2)(|\nabla^2u_\varepsilon|^2+|\partial_tu_\varepsilon|^2+|\nabla v_\varepsilon|^2)\nonumber\\
&+|\nabla^2u_\varepsilon|^2(|\nabla^2u_\varepsilon|+|\partial_tu_\varepsilon|)+|v_\varepsilon|^2|\nabla u_\varepsilon|^4+|\nabla u_\varepsilon|^6]dx.\label{5.6}
\end{align}
Note that
\begin{align}
&\int_{\mathbb R^3}|\nabla^2u_\varepsilon|^3dx=\int_{\mathbb R^3}|{\nabla_{ij}u_\varepsilon}|{\nabla_{ij}u_\varepsilon}\cdot{\nabla_{ij}u_\varepsilon}dx\nonumber\\
=&-\int_{\mathbb R^3}\nabla_i(|{\nabla_{ij}u_\varepsilon}|{\nabla_{ij}u_\varepsilon})\nabla_ju_\varepsilon dx\leq C\int_{\mathbb R^3}|\nabla^2u_\varepsilon||\nabla^3u_\varepsilon||\nabla u_\varepsilon|dx\nonumber\\
\leq&\eta\int_{\mathbb R^3}|\nabla^3u_\varepsilon|^2dx+C\int_{\mathbb R^3}|\nabla u_\varepsilon|^2|\nabla^2u_\varepsilon|^2dx\label{5.8}
\end{align}
and
\begin{align}
&\int_{\mathbb R^3}|\nabla^2u_\varepsilon|^2|\partial_tu_\varepsilon|dx\nonumber\\
=&\int_{\mathbb R^3}{\nabla_{ij}u_\varepsilon}\cdot{\nabla_{ij}u_\varepsilon}|\partial_tu_\varepsilon|dx=-\int_{\mathbb R^3}\nabla_i({\nabla_{ij}u_\varepsilon}|\partial_tu_\varepsilon|)\cdot\nabla_ju_\varepsilon dx\nonumber\\
\leq& C\int_{\mathbb R^3}|\nabla u_\varepsilon|(|\nabla^3u_\varepsilon||\partial_tu_\varepsilon|+|\nabla^2u_\varepsilon||\nabla\partial_tu_\varepsilon|)dx\nonumber\\
\leq&\eta\int_{\mathbb R^3}(|\nabla^3u_\varepsilon|^2+|\nabla\partial_tu_\varepsilon|^2)dx+C\int_{\mathbb R^3}|\nabla u_\varepsilon|^2(|\partial_tu_\varepsilon|^2+|\nabla^2u_\varepsilon|^2)dx\label{5.9}
\end{align}
for sufficient small $\eta>0$.
On the other hand, integrating by parts gives
\begin{align*}
&\int_{\mathbb R^3}|v_\varepsilon|^2|\nabla u_\varepsilon|^4dx=-\int_{\mathbb R^3}\textmd{div}(|v_\varepsilon|^2|\nabla u_\varepsilon|^2\nabla u_\varepsilon)(u_\varepsilon -b)dx\\
\leq&C\int_{\mathbb R^3}(|v_\varepsilon||\nabla v_\varepsilon||\nabla u_\varepsilon|^3+|v_\varepsilon|^2|\nabla u_\varepsilon|^2|\nabla^2u_\varepsilon|)dx\\
\leq&\frac{1}{2}\int_{\mathbb R^3}|v_\varepsilon|^2|\nabla u_\varepsilon|^4dx+C\int_{\mathbb R^3}(|\nabla u_\varepsilon|^2|\nabla v_\varepsilon|^2+|v_\varepsilon|^2|\nabla^2u_\varepsilon|^2)dx
\end{align*}
and
\begin{align*}
&\int_{\mathbb R^3}|\nabla u_\varepsilon|^6dx=-\int_{\mathbb R^3}\textmd{div}(|\nabla u_\varepsilon|^4\nabla u_\varepsilon)(u_\varepsilon -b)dx\\
\leq&C\int_{\mathbb R^3}|\nabla u_\varepsilon|^4|\nabla^2u_\varepsilon|dx\leq\frac{1}{2}\int_{\mathbb R^3}|\nabla u_\varepsilon|^6dx+C\int_{\mathbb R^3}|\nabla u_\varepsilon|^2|\nabla^2u_\varepsilon|^2dx.
\end{align*}
These imply
\begin{align}
\int_{\mathbb R^3}|v_\varepsilon|^2|\nabla u_\varepsilon|^4dx
\leq C\int_{\mathbb R^3}(|\nabla u_\varepsilon|^2|\nabla v_\varepsilon|^2+|v_\varepsilon|^2|\nabla^2u_\varepsilon|^2)dx\label{5.10}
\end{align}
and
\begin{align}
\int_{\mathbb R^3}|\nabla u_\varepsilon|^6dx \leq C\int_{\mathbb R^3}|\nabla u_\varepsilon|^2|\nabla^2u_\varepsilon|^2dx.\label{5.11}
\end{align}

Substituting (\ref{5.8})--(\ref{5.11}) into (\ref{5.6}) leads to
\begin{align*}
&\frac{d}{dt}\int_{\mathbb R^3}(|\Delta u_\varepsilon|^2+|\partial_t u_\varepsilon|^2+|\nabla v_\varepsilon|^2)dx+\int_{\mathbb R^3}\left(a|\nabla^3u_\varepsilon|^2+a|\nabla\partial_tu_\varepsilon|^2\right.\nonumber\\
&\left.+|\Delta v_\varepsilon|^2+|\partial_tv_\varepsilon|^2+\frac{1}{\varepsilon^2}|\Delta|u_\varepsilon|^2|^2+\frac{1}{\varepsilon^2}|\partial_t|u_\varepsilon|^2|^2\right)dx\nonumber\\
\leq&C\int_{\mathbb R^3}(|\nabla u_\varepsilon|^2+|v_\varepsilon|^2)(|\nabla^2u_\varepsilon|^2+|\partial_tu_\varepsilon|^2+|\nabla v_\varepsilon|^2)dx,
\end{align*}
which completes the proof.
\end{proof}

Due to the above lemma, we can prove the following uniform estimates (independent of $\varepsilon$) on the strong solutions to the system (\ref{1.10})--(\ref{1.12}).

\begin{prop}
\label{prop}
Suppose that the initial data $(u_{0\varepsilon}, v_{0\varepsilon})$ satisfies
\begin{align*}
&\frac{7}{8}\leq|u_{0\varepsilon}|\leq\frac{9}{8},\quad u_{0\varepsilon}-b\in H^2(\mathbb R^3),\quad v_{0\varepsilon}\in H^1(\mathbb R^3),\quad  \mbox{ div } v_{0\varepsilon} =0 \,\mbox { in }\R^3\\
&\|(\nabla u_{0\varepsilon}, v_{0\varepsilon}\|_{H^1(\mathbb R^3)}^2+\left\|\mathcal Q_\varepsilon(u_{0\varepsilon}, v_{0\varepsilon})\right\|_{L^2(\mathbb R^3)}^2\leq M^2
\end{align*}
for some positive constant $M$ and constant unit vector $b$, where
$$
\mathcal Q_{\varepsilon}(u, v)=\nabla_\alpha(W_{p_\alpha}(u,\nabla u))-W_u(u,\nabla u)+\frac{1-|u|^2}{\varepsilon^2}u-(v\cdot\nabla)u.
$$
Then there is an absolute constant $C^*>0$  such that the system (\ref{1.10})--(\ref{1.12}) with initial data $(u_{0\varepsilon}, v_{0\varepsilon})$ has a unique strong solutions $(u_\varepsilon, v_\varepsilon)$ in $\mathbb R^3\times(0, T_M)$ with $T_M=C^*M^{-4}$, satisfying
$$
\frac{7}{8}\leq|u_{\varepsilon}|\leq\frac{9}{8} \quad\mbox{ on }\mathbb R^3\times[0, C^*M^{-4}]
$$
and
\begin{align*}
&\sup_{0\leq t\leq T_M}\int_{\mathbb R^3}(|\Delta u_\varepsilon|^2+|\partial_tu_\varepsilon|^2+|\nabla v_\varepsilon|^2)dx\\
&+\int_0^{T_M}\int_{\mathbb R^3}(|\nabla\Delta u_\varepsilon |^2+|\nabla\partial_tu_\varepsilon|^2+|\Delta v_\varepsilon|^2+|\partial_tv_\varepsilon|^2)dxdt\leq C^*M^{-4},
\end{align*}
provided $\varepsilon\leq\varepsilon_M$, where $\varepsilon_M$ is a positive constant depending only on $M$.
\end{prop}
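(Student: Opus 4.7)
The plan is to run a bootstrap argument that couples the short--time existence of Lemma \ref{LLEM2.1} with the key differential inequality of Lemma \ref{lem5.1} and the pointwise smallness estimate (\ref{L.2}) produced inside the proof of Lemma \ref{lem5.1}. Let $T^\sharp_\varepsilon$ denote the supremum of times $T$ on which the strong solution from Lemma \ref{LLEM2.1} extends and satisfies $\tfrac{3}{4}\le|u_\varepsilon|\le\tfrac{5}{4}$ on $\mathbb R^3\times[0,T]$; by $\tfrac{7}{8}\le|u_{0\varepsilon}|\le\tfrac{9}{8}$ and continuity of strong solutions one has $T^\sharp_\varepsilon>0$. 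Set
\begin{equation*}
F(t)=\int_{\mathbb R^3}(|\Delta u_\varepsilon|^2+|\partial_tu_\varepsilon|^2+|\nabla v_\varepsilon|^2)\,dx,
\end{equation*}
and let $G(t)$ denote the integrated dissipation on the left side of Lemma \ref{lem5.1}. Equation (\ref{1.12}) at $t=0$ gives $\partial_tu_\varepsilon(0)=\mathcal Q_\varepsilon(u_{0\varepsilon},v_{0\varepsilon})$, so $F(0)\le 3M^2$.

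On $[0,T^\sharp_\varepsilon)$ the hypothesis of Lemma \ref{lem5.1} holds and the next task is to collapse its right hand side into a closed bound in $F$ alone. For the model term I would use the 3D Sobolev/Gagliardo--Nirenberg chain
\begin{equation*}
\int|\nabla u_\varepsilon|^2|\nabla^2u_\varepsilon|^2\,dx\le\|\nabla u_\varepsilon\|_{L^6}^2\|\nabla^2u_\varepsilon\|_{L^3}^2\le C\|\nabla^2u_\varepsilon\|_{L^2}^3\|\nabla^3u_\varepsilon\|_{L^2}\le CF^{3/2}G^{1/2},
\end{equation*}
and handle the other five products $|v|^2|\nabla^2u|^2$, $|\nabla u|^2|\partial_tu|^2$, $|v|^2|\partial_tu|^2$, $|\nabla u|^2|\nabla v|^2$, $|v|^2|\nabla v|^2$ analogously, using in addition $\|v_\varepsilon\|_{L^6}\le C\|\nabla v_\varepsilon\|_{L^2}$ and GN bounds for $\|\partial_tu_\varepsilon\|_{L^3}$ and $\|\nabla v_\varepsilon\|_{L^3}$. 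Young's inequality then converts $F^{3/2}G^{1/2}$ into $\eta G+CF^3$; choosing $\eta$ small enough absorbs $\eta G$ into the dissipation, leaving the closed ODI
\begin{equation*}
\frac{dF}{dt}+\tfrac{1}{2}G\le CF^3\qquad\text{on }[0,T^\sharp_\varepsilon).
\end{equation*}
Direct integration yields $F(t)\le 2F(0)\le CM^2$ and $\int_0^t G\,ds\le CM^2$ for all $t\le T_M:=C^*M^{-4}$ with $C^*$ absolute.

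The last and most delicate step, which I expect to be the main obstacle, is to upgrade these Sobolev norm bounds to the pointwise constraint $|u_\varepsilon|\in[\tfrac{7}{8},\tfrac{9}{8}]$; this closes the bootstrap by forcing $T^\sharp_\varepsilon\ge T_M$. For this I would exploit the $\varepsilon^2$--smallness encoded in (\ref{L.2}). Taking its $L^2$ norm in space and estimating the quadratic terms through the uniform bounds of the previous step yields
\begin{equation*}
\||u_\varepsilon|^2-1\|_{L^2(\mathbb R^3)}\le C\varepsilon^2\bigl(\|\partial_tu_\varepsilon\|_{L^2}+\|v_\varepsilon\|_{L^6}\|\nabla u_\varepsilon\|_{L^3}+\|\nabla^2u_\varepsilon\|_{L^2}+\|\nabla u_\varepsilon\|_{L^4}^2\bigr)\le C(M)\,\varepsilon^2,
\end{equation*}
while the chain rule combined with the $H^2$ bound on $u_\varepsilon-b$ (inherited from $F\le CM^2$ plus Lemma \ref{LLEM2.2}) gives $\||u_\varepsilon|^2-1\|_{H^2(\mathbb R^3)}\le C(M)$. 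The 3D interpolation $\|f\|_{L^\infty}\le C\|f\|_{L^2}^{1/4}\|\Delta f\|_{L^2}^{3/4}$ then produces
\begin{equation*}
\||u_\varepsilon|^2-1\|_{L^\infty(\mathbb R^3)}\le C(M)\,\varepsilon^{1/2}\qquad\text{on }[0,T_M\wedge T^\sharp_\varepsilon].
\end{equation*}
Choosing $\varepsilon_M$ so that $C(M)\varepsilon_M^{1/2}\le\tfrac{15}{64}$ keeps $|u_\varepsilon|$ strictly inside $[\tfrac{7}{8},\tfrac{9}{8}]$, hence strictly inside $[\tfrac{3}{4},\tfrac{5}{4}]$. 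By continuity $T^\sharp_\varepsilon$ cannot be strictly less than $T_M$, and Lemma \ref{LLEM2.1} continues the solution up to $T_M$ with all stated estimates. The subtle point here is that the $L^\infty$ smallness is not directly available from the a priori bounds (since $H^1\not\hookrightarrow L^\infty$ in 3D): it is only recovered by \emph{combining} the $\varepsilon^2$ smallness in $L^2$ from (\ref{L.2}) with an $M$-dependent, $\varepsilon$-uniform $H^2$ bound, via the quarter-three-quarter interpolation in $\mathbb R^3$.
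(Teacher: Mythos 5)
Your proposal is correct and follows essentially the same route as the paper: local existence, the differential inequality of Lemma \ref{lem5.1} closed by Gagliardo--Nirenberg into $f'\leq Cf^3$ to get uniform bounds on $[0,C^*M^{-4}]$, then $L^2$--$H^2$ interpolation to force $\|1-|u_\varepsilon|^2\|_{L^\infty}\leq C(M)\varepsilon^{\theta}$ and a continuity/bootstrap argument to keep $|u_\varepsilon|\in[\tfrac78,\tfrac98]$ for $\varepsilon\leq\varepsilon_M$. The only (harmless) deviation is the source of the $L^2$-smallness of $1-|u_\varepsilon|^2$: the paper reads it off the Ginzburg--Landau term in the energy identity of Lemma \ref{LLEM2.2} (giving $O(\varepsilon)$ in $L^2$, hence $O(\varepsilon^{1/4})$ in $L^\infty$), whereas you derive it pointwise from (\ref{L.2}) together with the uniform bounds (giving $O(\varepsilon^2)$, hence $O(\varepsilon^{1/2})$); both close the argument.
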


\begin{proof}
By Lemma \ref{LLEM2.1}, there is a unique local solution to the system (\ref{1.10})--(\ref{1.12}) with initial data $(u_{0\varepsilon}, v_{0\varepsilon})$, which can be extended to the maximum time $T_\varepsilon^{\mbox{max}}$. Note that the properties of $u$ stated in Lemma \ref{LLEM2.1} impliy that $u$ is H\"older continuous on $\mathbb R^3\times[0, T_\varepsilon^{\mbox{max}})$ due to the well-known Gagliado-Nirenberg-Sobolev inequality. Since $\frac{7}{8}\leq|u_{0\varepsilon}|\leq\frac{9}{8}$, there is a maximal time $T_\varepsilon^1\in(0, T_\varepsilon^{\mbox{max}}]$, such that $\frac{3}{4}\leq|u_\varepsilon|\leq\frac{5}{4}$ on $\mathbb R^3\times[0, T_\varepsilon^1)$.

It follows from Lemma \ref{lem5.1} that
\begin{align*}
&\frac{d}{dt}\int_{\mathbb R^3}(|\Delta u_\varepsilon|^2+|\partial_t u_\varepsilon|^2+|\nabla v_\varepsilon|^2)dx\\
&+\int_{\mathbb R^3}\left(a|\nabla^3u_\varepsilon|^2+a|\nabla\partial_tu_\varepsilon|^2+|\Delta v_\varepsilon|^2+|\partial_tv_\varepsilon|^2\right)dx\nonumber\\
\leq&C\int_{\mathbb R^3}(|\nabla u_\varepsilon|^2+|v_\varepsilon|^2)(|\nabla^2u_\varepsilon|^2+|\partial_tu_\varepsilon|^2+|\nabla v_\varepsilon|^2)dx
\end{align*}
for any $t\in(0, T_\varepsilon^1)$. Using the Gagliado-Nirenberg-Sobolev inequality in the above inequality yields
\begin{align*}
&\frac{d}{dt}\int_{\mathbb R^3}(|\Delta u_\varepsilon|^2+|\partial_t u_\varepsilon|^2+|\nabla v_\varepsilon|^2)dx\\
&+\int_{\mathbb R^3}\left(a|\nabla^3u_\varepsilon|^2+a|\nabla\partial_tu_\varepsilon|^2+|\Delta v_\varepsilon|^2+|\partial_tv_\varepsilon|^2\right)dx\nonumber\\
\leq&C\left[\int_{\mathbb R^3}(|\nabla u_\varepsilon|^6+|v_\varepsilon|^6)dx\right]^{1/3}\left[\int_{\mathbb R^3}(|\nabla^2u_\varepsilon|^2+|\partial_tu_\varepsilon|^2+|\nabla v_\varepsilon|^2)dx\right]^{1/2}\\
&\times\left[\int_{\mathbb R^3}(|\nabla^2u_\varepsilon|^6+|\partial_tu_\varepsilon|^6+|\nabla v_\varepsilon|^6)dx\right]^{1/6}\\
\leq&C\left[\int_{\mathbb R^3}(|\Delta u_\varepsilon|^2+|\nabla v_\varepsilon|^2)dx\right]\left[\int_{\mathbb R^3}(|\Delta u_\varepsilon|^2+|\partial_tu_\varepsilon|^2+|\nabla v_\varepsilon|^2)dx\right]^{1/2}\\
&\times\left[\int_{\mathbb R^3}(|\nabla\Delta u_\varepsilon |^2+|\nabla\partial_tu_\varepsilon|^2+|\Delta v_\varepsilon|^2)dx\right]^{1/2}\\
\leq&\eta\int_{\mathbb R^3}(|\nabla\Delta u_\varepsilon|^2+|\nabla\partial_tu_\varepsilon|^2+|\Delta v_\varepsilon|^2)dx\\
&+C\left[\int_{\mathbb R^3}(|\Delta u_\varepsilon|^2+|\partial_tu_\varepsilon|^2+|\nabla v_\varepsilon|^2)dx\right]^{3}
\end{align*}
and thus
\begin{align}
&\frac{d}{dt}\int_{\mathbb R^3}(|\Delta u_\varepsilon|^2+|\partial_t u_\varepsilon|^2+|\nabla v_\varepsilon|^2)dx\nonumber\\
&+\int_{\mathbb R^3}\left(\frac{a}{2}|\nabla^3u_\varepsilon|^2+\frac{a}{2}|\nabla\partial_tu_\varepsilon|^2+|\Delta v_\varepsilon|^2+|\partial_tv_\varepsilon|^2\right)dx\nonumber\\
\leq&C\left[\int_{\mathbb R^3}(|\Delta u_\varepsilon|^2+|\partial_tu_\varepsilon|^2+|\nabla v_\varepsilon|^2)dx\right]^{3}\label{L.8}
\end{align}
for any $t\in(0, T_\varepsilon^1)$.

Define
\begin{align*}
f(t)=&\int_{\mathbb R^3}\left(|\Delta u_\varepsilon|^2+|\partial_tu_\varepsilon|^2+|\nabla v_\varepsilon|^2\right)dx\nonumber\\
&+\int_0^t\int_{\mathbb R^3}\left(\frac{a}{2}|\nabla^3u_\varepsilon|^2+\frac{a}{2}|\nabla\partial_tu_\varepsilon|^2+|\Delta v_\varepsilon|^2+|\partial_tv_\varepsilon|^2\right)dxd\tau.
\end{align*}
It follows from (\ref{L.8}) that
$$
f'(t)\leq C_1f(t)^3,
$$
which implies
\begin{equation*}
f(t)\leq\left(\frac{1}{1-2C_1f(0)^2t}\right)^{1/2}f(0)\leq2f(0)
\end{equation*}
for any $t\leq \min\left\{T_\varepsilon^1, \frac{3}{8C_1f(0)^2}\right\}$.
Due to equation (\ref{1.12}), it holds that
$$
f(0)=\int_{\mathbb R^3}(|\Delta u_{0\varepsilon}|^2+|\nabla v_{0\varepsilon}|^2+|\mathcal Q_{\varepsilon}(u_{0\varepsilon}, v_{0\varepsilon})|^2)dx\leq C_2M^2
$$
and thus
\begin{equation}
f(t)\leq 2C_2M^2,\qquad\forall t\leq\min\{T_\varepsilon^1, C_3M^{-4}\}, \label{2.18}
\end{equation}
where $C_3=\frac{3}{8C_1C_2^2}$.

By Lemma \ref{LLEM2.2}, one has
$$
\sup_{0\leq t\leq T_\varepsilon^{\text{max}}}(\|(\nabla u_\varepsilon, v_\varepsilon)\|_{L^2}^2+\varepsilon^{-2}\|1-|u_\varepsilon|^2\|_{L^2}^2)\leq CM^2.
$$
Combining the above inequality with (\ref{2.18}) and using Gagliado-Nirenberg-Sobolev inequality, we deduce
\begin{align*}
\|1-|u_\varepsilon|^2\|_{L^\infty(\mathbb R^3)}\leq&C\|1-|u_\varepsilon|^2\|_{L^2(\mathbb R^3)}^{1/4}\|\nabla^2(1-|u_\varepsilon|^2)\|_{L^2(\mathbb R^3)}^{3/4}\\
\leq&C(\varepsilon M)^{1/4}(\|\nabla^2u_\varepsilon\|_{L^2(\mathbb R^3)}^{3/4}+\|\nabla u_\varepsilon\|_{L^4(\mathbb R^3)}^{3/4})\\
\leq&C(\varepsilon M)^{1/4}(1+\|\nabla u_\varepsilon\|_{H^1(\mathbb R^3)}^{3/2})\\
\leq&C(\varepsilon M)^{1/4}(1+M^{3/2})\leq C_4(1+M^{7/4})\varepsilon^{1/4}
\end{align*}
for any $t\leq \min\{T_\varepsilon^1, C_3M^{-4}\}$, and thus
$$
\frac{7}{8}\leq|u_\varepsilon|\leq\frac{9}{8},\qquad\text{on }\mathbb R^3\times[0, \min\{T_\varepsilon^1, C_3M^{-4}\}],
$$
provided $\varepsilon\leq\varepsilon_M:=\left[\frac{(9/8)^2-1}{C_4(1+M^{7/4})}\right]^4. $ Note that (\ref{2.18}) implies $\min\{T_\varepsilon^1, C_3M^{-4}\}<T_\varepsilon^{\text{max}}$, otherwise we can extend $(u_\varepsilon, v_\varepsilon)$ beyond $T_\varepsilon^{\text{max}}$, contradicting to the definition of $T_\varepsilon^{\text{max}}$. Due to this fact and the above inequality, there is another time $T_\varepsilon^2$ with
$$
\min\{T_\varepsilon^1, C_3M^{-4}\}<T_\varepsilon^2\leq T_\varepsilon^{\text{max}},
$$
such that
$$
\frac{3}{4}\leq|u_\varepsilon|\leq\frac{5}{4}\quad\mbox{on }\mathbb R^3\times[0, T_\varepsilon^2).
$$
The definition of $T_\varepsilon^1$ gives $T_\varepsilon^1\geq T_\varepsilon^2$, and thus
$$
\min\{T_\varepsilon^1, C_3M^{-4}\}<T_\varepsilon^2\leq T_\varepsilon^1,
$$
which forces $T_\varepsilon^1>C_3M^{-4}.$ As a result, it follows from (\ref{2.18}) that
$$
f(t)\leq 2C_2M^2,\qquad\forall t\leq C_3M^{-4},
$$
which implies the conclusion with $C^*=C_3$.
\end{proof}

We will use the following version of the Aubin-Lions lemma.

\begin{lemma}\label{lem2.3}
(Aubin-Lions Lemma, See Simon \cite{Simon} Corollary 4) Assume that $X, B$ and $Y$ are three Banach spaces, with $X\hookrightarrow\hookrightarrow B\hookrightarrow Y.$ Then it holds that

(i) If $F$ is a bounded subset of $L^p(0, T; X)$ where $1\leq p<\infty$, and $\frac{\partial F}{\partial t}=\left\{\frac{\partial f}{\partial t}|f\in F\right\}$ is bounded in $L^1(0, T; Y)$, then $F$ is relatively compact in $L^p(0, T; B)$;

(ii) If $F$ is bounded in $L^\infty(0, T; X)$ and $\frac{\partial F}{\partial t}$ is bounded in $L^r(0, T; Y)$ where $r>1$, then $F$ is relatively compact in $C([0, T]; B)$.
\end{lemma}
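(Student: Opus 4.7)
The plan is to deduce both parts from two classical ingredients: (a) an Ehrling-type interpolation inequality that trades the $X$-bound against the $Y$-bound to control the $B$-norm, and (b) a time-equicontinuity estimate derived from the bound on $\partial_tf$, after which a Fréchet-Kolmogorov (or Arzelà-Ascoli) argument packages everything into the desired compactness.

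First I would establish the Ehrling lemma: for every $\eta>0$ there exists $C_\eta>0$ such that
\begin{equation*}
\|w\|_B\le \eta\|w\|_X+C_\eta\|w\|_Y,\qquad\forall\,w\in X.
\end{equation*}
This is proved by contradiction using that $X\hookrightarrow\hookrightarrow B\hookrightarrow Y$: a counterexample produces $w_n$ with $\|w_n\|_B=1$, $\|w_n\|_X$ bounded and $\|w_n\|_Y\to0$; compactness yields a $B$-limit $w$ with $\|w\|_B=1$, while continuity of $B\hookrightarrow Y$ forces $w=0$, a contradiction. Second, I would control time translates in $Y$. For $f\in F$ and $0<h<T$, the fundamental theorem of calculus gives $\|f(t+h)-f(t)\|_Y\le\int_t^{t+h}\|\partial_s f(s)\|_Y\,ds$; under (i), integrating in $t$ and applying Fubini yields $\|f(\cdot+h)-f(\cdot)\|_{L^1(0,T-h;Y)}\le h\|\partial_tf\|_{L^1(0,T;Y)}$, and under (ii), Hölder gives $\|f(t+h)-f(t)\|_Y\le h^{1-1/r}\|\partial_tf\|_{L^r(0,T;Y)}$, i.e.\ uniform Hölder continuity in $Y$.

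For part (i), I would combine these: applying the Ehrling inequality pointwise in $t$ to $w=f(t+h)-f(t)$ and integrating,
\begin{equation*}
\|f(\cdot+h)-f(\cdot)\|_{L^p(B)}\le \eta\,\|f(\cdot+h)-f(\cdot)\|_{L^p(X)}+C_\eta\,\|f(\cdot+h)-f(\cdot)\|_{L^p(Y)}.
\end{equation*}
The first term is bounded by $2\eta$ times the uniform $L^p(X)$-bound; the second tends to zero as $h\to0$ uniformly in $f$ by the translate estimate (with a routine interpolation between the $L^1(Y)$ translate estimate and a uniform $L^\infty(Y)$ bound coming from the embedding applied to $L^p(X)$). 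Choosing $\eta$ small and then $h$ small, we obtain uniform $L^p(0,T-h;B)$-equicontinuity of time translates. Together with relative compactness of $\{f(t):f\in F\}$ in $B$ for a.e.\ $t$ (a consequence of the $L^p(X)$ bound combined with a density/slicing argument to pass from integral to pointwise bounds), the Fréchet-Kolmogorov theorem yields relative compactness of $F$ in $L^p(0,T;B)$. For part (ii), the argument is cleaner: the $L^r$-bound with $r>1$ gives uniform Hölder continuity in $Y$, hence in $B$ via Ehrling and the $L^\infty(X)$-bound, so $F\subset C([0,T];B)$ is uniformly equicontinuous; pointwise relative compactness in $B$ follows from the $L^\infty(X)$ bound; Arzelà-Ascoli then delivers relative compactness in $C([0,T];B)$.

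The main obstacle is the delicate step in part (i) of obtaining time-equicontinuity in $L^p(0,T;Y)$ from only an $L^1(0,T;Y)$ bound on $\partial_tf$ when $p>1$: the direct translate estimate yields only an $L^1(Y)$ bound, and upgrading it requires either equi-integrability of $\|\partial_tf\|_Y$ or an interpolation with a uniform bound in a stronger norm. The clean way is to note that $F$ is bounded in $L^\infty(0,T;Y)$ as well (since $f(t)=f(0)+\int_0^t\partial_sf\,ds$ with $f(0)$ controlled by the $L^p(X)$ bound via an averaging argument), then interpolate the $L^1$ translate estimate with this $L^\infty$ bound to produce an $L^p$ translate estimate going to zero with $h$; this is the technical heart of the argument and where the proof in Simon's paper is most careful.
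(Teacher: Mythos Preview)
The paper does not prove this lemma at all: it is stated with an explicit citation to Simon \cite{Simon}, Corollary 4, and is used as a black box in the subsequent compactness arguments. So there is no proof in the paper to compare against.

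Your sketch is broadly the standard route (and essentially Simon's): Ehrling interpolation plus translate estimates feeding into a Fr\'echet--Kolmogorov/Arzel\`a--Ascoli argument. A couple of points deserve tightening if you intend this as a complete proof rather than a sketch. First, for part (i) the Fr\'echet--Kolmogorov criterion you want is not ``$\{f(t):f\in F\}$ relatively compact in $B$ for a.e.\ $t$'' (which does not even make sense without further work, since $f(t)$ is only defined a.e.); what one actually verifies is relative compactness in $B$ of the set of time-averages $\{\frac{1}{|J|}\int_J f(t)\,dt : f\in F\}$ for each interval $J\subset(0,T)$, which follows directly from the $L^p(0,T;X)$ bound and the compact embedding. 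Second, your own last paragraph correctly identifies the genuine technical obstacle---upgrading the $L^1(Y)$ translate estimate to $L^p(Y)$ when $p>1$---and your proposed fix via interpolation with an $L^\infty(0,T;Y)$ bound is the right idea, but the averaging argument to control $f(0)$ (or rather some representative value) from $L^p(0,T;X)$ plus $L^1(0,T;Y)$ on $\partial_tf$ needs a careful statement; Simon handles this via his Lemma 4. These are refinements rather than fatal gaps, and since the paper simply imports the result, your proposal already goes well beyond what the paper itself provides.
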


Now we can prove the local existence and uniqueness of strong solutions to the Ericksen-Leslie system.

\begin{proof} [Proof of the local existence and uniqueness part of Theorem \ref{thm1}]
  For any $\varepsilon>0$, by Proposition \ref{prop}, there is a positive number $T$ independent of $\varepsilon$, such that the system (\ref{1.10})--(\ref{1.12}) with the initial condition (\ref{1.9}) has a unique solution $(u_\varepsilon, v_\varepsilon)$, with the properties
\begin{eqnarray*}
&&u_\varepsilon\in L^2(0,T; H_b^3(\mathbb R^3)),\quad\partial_tu_\varepsilon\in L^2(0, T; H^1(\mathbb R^3)),\\
&&v_\varepsilon\in L^2(0, T; H^2_\sigma(\mathbb R^3)),\quad\partial_tv_\varepsilon\in L^2(\mathbb R^3\times(0, T)),\\
&&\frac{7}{8}\leq|u_\varepsilon|\leq\frac{9}{8}\mbox{ on }\mathbb R^3\times(0,T)
\end{eqnarray*}
and
\begin{align*}
&\sup_{0\leq t\leq T}\int_{\mathbb R^3}\left(\frac{(1-|u_\varepsilon|^2)^2}{\varepsilon^2}+|\nabla u_\varepsilon|^2+|\nabla^2u_\varepsilon|^2+|\partial_tu_\varepsilon|^2+|v_\varepsilon|^2+|\nabla v_\varepsilon|^2\right)dx\\
&+\int_0^T\int_{\mathbb R^3}(|\nabla^2u_\varepsilon|^2+|\partial_tu_\varepsilon|^2+|\nabla^3u_\varepsilon|^2+|\nabla\partial_tu_\varepsilon|^2+|\nabla v_\varepsilon|^2+|\nabla^2v_\varepsilon|^2\\
&+|\partial_tv_\varepsilon|^2)dxdt\leq C.
\end{align*}

Due to (\ref{1.10}) and (\ref{1.11}), the pressure $p_\varepsilon$ satisfies
$$
\Delta p_\varepsilon=-\nabla_{ij}^2(v_\varepsilon^i v_\varepsilon^j+\nabla_iu_\varepsilon^kW_{p_{j}^k}(u_\varepsilon,\nabla u_\varepsilon))
$$
and
\begin{align*}
\Delta p_\varepsilon=&-\nabla\cdot((v_\varepsilon\cdot\nabla)v_\varepsilon)-\nabla_i[\nabla_{ij}^2u_\varepsilon^kW_{p_j^k}(u_\varepsilon,\nabla u_\varepsilon)\\
&+\nabla_i u_\varepsilon^k(W_{p_j^kp_\alpha^l}(u_\varepsilon,\nabla u_\varepsilon)\nabla_{j\alpha}^2u_\varepsilon^l+W_{u^lp_j^k}(u_\varepsilon,\nabla u_\varepsilon)\nabla_ju_\varepsilon^l)]
\end{align*}
from which, using elliptic estimates, we obtain
$$
\int_0^T\int_{\mathbb R^3}|p_\varepsilon|^2dxdt\leq C\int_0^T\int_{\mathbb R^3}(|v_\varepsilon|^4+|\nabla u_\varepsilon|^4)dxdt\leq C
$$
and
\begin{align*}
&\int_0^T\int_{\mathbb R^3}|\nabla p_\varepsilon|^2dxdt\\
\leq& C\int_0^T\int_{\mathbb R^3}[|v_\varepsilon|^2|\nabla v_\varepsilon|^2+|\nabla^2u_\varepsilon|^2|\nabla u_\varepsilon|^2+|\nabla u_\varepsilon|^2(|\nabla^2u_\varepsilon|^2+|\nabla u_\varepsilon|^4)]dxdt\\
\leq&\int_0^T\int_{\mathbb R^3}(|v_\varepsilon|^6+|\nabla u_\varepsilon|^6+|\nabla v_\varepsilon|^3+|\nabla^2u_\varepsilon|^3)dxdt\leq C
\end{align*}
for some positive constant $C$ independent of $\varepsilon$. In the above, we have used the Gagliado-Nirenberg-Sobolev inequality and the estimates stated in the previous.

On account of all the estimates obtained in the above, there is a subsequence, still denoted by $(u_\varepsilon, v_\varepsilon, p_\varepsilon)$, and $(u, v, p)$, such that
\begin{equation}\label{2.17}
\begin{array}{l}
u\in L^2(0,T; H_b^3(\mathbb R^3; S^2),\quad \partial_tu\in L^2(0, T; H^1(\mathbb R^3)),\\
v\in L^2(0, T; H^2(\mathbb R^3)),\quad\partial_tv\in L^2(\mathbb R^3\times(0, T)),\\
p\in L^2(0, T; H^1(\mathbb R^3)),\quad(u, v)\mbox{ satisfies the initial condition }
\end{array}
\end{equation}
and for any $R\in(0, \infty)$
\begin{eqnarray*}
&&u_\varepsilon\rightarrow u\mbox{ in }L^2(0, T; H^2(B_R))\cap C([0, T]; H^1(B_R)),\\
&&u_\varepsilon\rightharpoonup u\mbox{ in }L^2(0, T; H^3(\mathbb R^3)),\quad \partial_tu_\varepsilon\rightharpoonup\partial_tu\mbox{ in }L^2(0, T; H^1(\mathbb R^3)),\\
&&v_\varepsilon\rightarrow v\mbox{ in }L^2(0, T; H^1(B_R))\cap L^2([0,T ]; L^2(B_R)),\\
&&v_\varepsilon\rightharpoonup v\mbox{ in }L^2(0, T; H^2(\mathbb R^3)),\quad\partial_tv_\varepsilon\rightharpoonup\partial_t u\mbox{ in }L^2(\mathbb R^3\times(0, T)),\\
&&p_\varepsilon\rightharpoonup p\mbox{ in }L^2(0, T; L^2(\mathbb R^3)),
\end{eqnarray*}
where $|u|=1$ follows from the estimate that $\sup_{0\leq t\leq T}\int_{\mathbb R^3}\frac{(1-|u_\varepsilon|^2)^2}{\varepsilon^2}dx\leq C$, while the strong convergence stated above follows from the Aubin-Lions lemma.

By (\ref{1.12}), we have
\begin{align}
&\partial_tu_\varepsilon\times u_\varepsilon+(v_\varepsilon\cdot\nabla)u_\varepsilon\times u_\varepsilon=\nabla_\alpha[W_{p_\alpha}(u_\varepsilon,\nabla u_\varepsilon)]\times u_\varepsilon-W_{u}(u_\varepsilon,\nabla u_\varepsilon)\times u_\varepsilon\nonumber\\
=&\nabla_\alpha[W_{p_\alpha}(u_\varepsilon,\nabla u_\varepsilon)\times u_\varepsilon]-W_{p_\alpha}(u_\varepsilon,\nabla u_\varepsilon)\times\nabla_\alpha u_\varepsilon-W_u(u_\varepsilon,\nabla u_\varepsilon)\times u_\varepsilon.\label{L.3-1}
\end{align}

Thus we can take the limit $\varepsilon\rightarrow0$ in (\ref{1.10}), (\ref{1.11}) and (\ref{L.3-1}) to conclude
\begin{eqnarray*}
&&\partial_t{v^i}+v\nabla v^i-\Delta {v^i}+\nabla_ip=-\nabla_j[\nabla_iu^kW_{p_j^k}(u,\nabla u)],\\
&&\nabla\cdot v=0
\end{eqnarray*}
and
\begin{align}
&\partial_tu\times u+(v\cdot\nabla)u\times u\nonumber\\
=&\nabla_\alpha[W_{p_\alpha}(u,\nabla u)]\times u-W_u(u,\nabla u)\times u.\label{L.3-2}
\end{align}
Recalling that $|u|=1$, one can calculate to get
\begin{eqnarray*}
&&-(\partial_tu\times u)\times u=(u\cdot u)\partial_tu-(\partial_tu\cdot u)u=\partial_tu,\\
&&-(v_k\nabla_ku\times u)\times u=v_k(u\cdot u)\nabla_ku-v_k(\nabla_ku\cdot u)u=(v\cdot\nabla)u,
\end{eqnarray*}
\begin{align*}
-\nabla_\alpha[W_{p_\alpha}(u,\nabla u)]\times u\times u=&(u\cdot u)\nabla_\alpha[W_{p_\alpha}(u,\nabla u)]-[\nabla_\alpha(W_{p_\alpha}(u,\nabla u))\cdot u]u\\
=&\nabla_\alpha[W_{p_\alpha}(u,\nabla u)]-\nabla_\alpha[(V_{p_\alpha}(u,\nabla u)\cdot u)u]\\
&+[V_{p_\alpha}(u,\nabla u)\cdot u]\nabla_\alpha u+[W_{p_\alpha}(u,\nabla u)\cdot\nabla_\alpha u]u,
\end{align*}
and
\begin{align*}
-W_u(u,\nabla u)\times u\times u=&(u\cdot u)W_u(u,\nabla u)-(W_u(u,\nabla u)\cdot u) u\\
=&W_u(u,\nabla u)-(W_u(u,\nabla u)\cdot u) u.
\end{align*}
By the aid of the above identities, we obtain
\begin{align*}
\partial_tu+&(v\cdot\nabla )u=\nabla_\alpha[W_{p_\alpha}(u,\nabla u)-(V_{p_\alpha}(u,\cdot\nabla u)\cdot u)u]-W_u(u,\nabla u)\\
+&(W_u(u,\nabla u)\cdot u)u+(W_{p_\alpha}(u,\nabla u)\cdot\nabla_\alpha u)u+(V_{p_\alpha}(u,\nabla u)\cdot u)\nabla_\alpha u,
\end{align*}
which is exactly (\ref{1.7}).

The uniqueness of strong solutions follows from the regularities stated in (\ref{2.17}) by using the standard argument. The proof is completed. \end{proof}


\section{Blow up criteria}\label{sec3}

In this section, we establish Serrin type or Beal-Kato-Majda type or mixed type (Serrin condition on one field and Beal-Kato-Majda condition on the other one) blow up criteria to strong solutions to the Ericksen-Leslie system, in other words, which will complete the proof of Theorem \ref{thm1} on the blow up criteria.

Strong solutions to the Ericksen-Leslie system satisfy the following basic energy balance law.

\begin{lemma}\label{lem4.0}
Let $(u, v)$ be a strong solution to (\ref{1.5})--(\ref{1.7}) in $\mathbb R^3\times(0, T)$. Then we have
\begin{align}
\frac{d}{dt}\int_{\mathbb R^3}\left(\frac{|v|^2}{2}+W(u,\nabla u)\right)dx+\int_{\mathbb R^3}(|\nabla v|^2+|\partial_tu+(v\cdot\nabla u)|^2dx=0.\label{7.1}
\end{align}
for any $t\in(0, T)$.
\end{lemma}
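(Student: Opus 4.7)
\medskip
\noindent\textbf{Proof proposal.} The strategy parallels that of Lemma \ref{LLEM2.2} for the Ginzburg-Landau approximation, with the Ginzburg-Landau penalty replaced by the pointwise constraint $|u|=1$. The plan is to test (\ref{1.5}) against $v^i$ and (\ref{1.7}) against $\partial_tu^i+(v\cdot\nabla)u^i$, integrate over $\mathbb R^3$, and show that the resulting cross-terms involving $\int W_{p_\alpha^i}\nabla_\alpha v^j\nabla_j u^i\,dx$ cancel.

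First I would dispatch equation (\ref{1.5}): multiply by $v^i$, integrate, and use $\Div v=0$ together with integration by parts to kill the convection and pressure; the stress-coupling term on the right of (\ref{1.5}) integrates by parts (boundary terms vanish by the decay assumption $v\in H^2$, $u-b\in H^3$) to yield
\begin{equation*}
\frac{d}{dt}\int_{\mathbb R^3}\tfrac12|v|^2\,dx+\int_{\mathbb R^3}|\nabla v|^2\,dx=\int_{\mathbb R^3}W_{p_j^k}(u,\nabla u)\,\nabla_iu^k\,\nabla_jv^i\,dx.
\end{equation*}

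The main step is testing (\ref{1.7}) against $\partial_tu^i+(v\cdot\nabla)u^i$. Here the crucial observation is that $|u|=1$ implies $u^i\partial_tu^i=0$ and $u^i(v\cdot\nabla)u^i=0$, so the test vector is pointwise tangent to $S^2$. Consequently, every term on the right of (\ref{1.7}) that carries an undifferentiated factor of $u^i$ drops out in the pairing. More precisely, the algebraic terms $W_{u^k}u^ku^i$ and $W_{p_\alpha^l}\nabla_\alpha u^lu^i$ contribute zero, while in expanding the divergence $\nabla_\alpha[u^ku^iV_{p_\alpha^k}]$ via Leibniz the pieces in which the derivative does not fall on $u^i$ vanish upon contraction, and the remaining piece $-u^kV_{p_\alpha^k}\nabla_\alpha u^i$ cancels exactly the last term $V_{p_\alpha^k}u^k\nabla_\alpha u^i$ of (\ref{1.7}). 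Thus the right side of (\ref{1.7}) collapses effectively to $\nabla_\alpha W_{p_\alpha^i}(u,\nabla u)-W_{u^i}(u,\nabla u)$ when paired with $\partial_tu^i+(v\cdot\nabla)u^i$.

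Once this reduction is made, I would integrate by parts in space and use the chain rule
\begin{equation*}
\partial_tW(u,\nabla u)=W_{u^i}\partial_tu^i+W_{p_\alpha^i}\partial_t\nabla_\alpha u^i
\end{equation*}
to rewrite $\int\nabla_\alpha W_{p_\alpha^i}\,\partial_tu^i\,dx=-\frac{d}{dt}\int W\,dx+\int W_{u^i}\partial_tu^i\,dx$, whereupon the two $W_{u^i}\partial_tu^i$ contributions cancel. For the convective piece, integration by parts together with commuting of spatial derivatives gives
\begin{equation*}
\int_{\mathbb R^3}\nabla_\alpha W_{p_\alpha^i}(v^j\nabla_ju^i)\,dx=-\int_{\mathbb R^3}W_{p_\alpha^i}\nabla_\alpha v^j\nabla_ju^i\,dx-\int_{\mathbb R^3}v^j\nabla_jW(u,\nabla u)\,dx+\int_{\mathbb R^3}W_{u^i}v^j\nabla_ju^i\,dx,
\end{equation*}
and the middle integral vanishes because $\Div v=0$. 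Adding the two identities, the cross terms $\pm\int W_{p_\alpha^i}\nabla_\alpha v^j\nabla_ju^i\,dx$ cancel and one obtains (\ref{7.1}).

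The main obstacle is the bookkeeping in step 2: confirming that the three ``projection'' corrections that the Ericksen-Leslie equation (\ref{1.7}) uses to enforce $|u|=1$ (namely the divergence correction $-\nabla_\alpha[u^ku^iV_{p_\alpha^k}]$ and the algebraic corrections $W_{u^k}u^ku^i$ and $W_{p_\alpha^l}\nabla_\alpha u^lu^i$, plus the convective gauge term $V_{p_\alpha^k}u^k\nabla_\alpha u^i$) together produce no net contribution when paired with the tangent field $\partial_tu+(v\cdot\nabla)u$. Everything else is routine integration by parts justified by the strong-solution regularity in the Definition.
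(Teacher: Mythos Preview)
Your proposal is correct and follows exactly the approach the paper indicates: multiply (\ref{1.5}) by $v^i$ and (\ref{1.7}) by $\partial_tu^i+(v\cdot\nabla)u^i$, integrate, and observe that the cross terms cancel. The paper's own proof is a one-line reference to Lemma~3.1 of \cite{HX}; you have simply written out the details that are deferred there, including the key observation that the constraint $|u|=1$ forces the projection corrections in (\ref{1.7}) to annihilate against the tangent test field.
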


\begin{proof}
Equality (\ref{7.1}) follows from by multiplying (\ref{1.5}) by $v^i$ and (\ref{1.7}) by $\partial_tu^i+v\nabla u^i$, summing the resulting equations up and integrating over $\mathbb R^3$. Details can be found in the proof of Lemma 3.1 in \cite{HX}.\end{proof}

The following lemma states high order energy inequalities on the strong solutions to the Ericksen-Leslie system, which is one of the key lemmas of this paper.

\begin{lemma}\label{lem4.1}
Let $(u, v)$ be a strong solution to (\ref{1.5})--(\ref{1.7}) in $\mathbb R^3\times(0, T)$. Then
\begin{align}
&\frac{d}{dt}\int_{\mathbb R^3}(|\nabla v|^2+|\Delta u|^2)dx+\int_{
\mathbb R^3}(|\Delta v|^2+\frac{3a}{2}|\nabla^3u|^2)dx\nonumber\\
\leq&C\min\left\{\int_{\mathbb R^3}|v|^2|(|\nabla v|^2+\nabla^2u|^2)dx, \int_{\mathbb R^3}|\nabla v|(|\nabla v|^2+\nabla^2u|^2)dx\right\}\nonumber\\
&+C\int_{\mathbb R^3}|\nabla u|^2(|\nabla^2u|^2+|\nabla v|^2)dx
\end{align}
and
\begin{align}
&\frac{d}{dt}\int_{\mathbb R^3}(|\nabla^3u|^2+|\Delta v|^2)dx+\int_{\mathbb R^3}(\frac{3a}{2}|\nabla^4u|^2+|\nabla^3v|^2)dx\nonumber\\
\leq&C\int_{\mathbb R^3}[|\nabla u|^2(|\Delta v|^2+|\nabla^3u|^2)+|\nabla^2u|^4+|\nabla v|^4]dx\nonumber\\
&+C\min\left\{\int_{\mathbb R^3}|v|^2|\nabla^2v|^2dx,\int_{\mathbb R^3}|\nabla v||\nabla^2v|^2\right\}\label{3.3}
\end{align}
for any $t\in(0, T)$.
\end{lemma}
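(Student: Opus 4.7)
The plan mirrors the strategy used for the approximate system in Lemma \ref{lem5.1}, but now exploits the exact constraint $|u|=1$ (rather than the Ginzburg-Landau penalization) to handle all higher-order contributions coming from the projection terms on the right-hand side of (\ref{1.7}). The basic tool is the family of identities $u\cdot\nabla u=0$, $u\cdot\Delta u=-|\nabla u|^2$, and so on: whenever a top-order derivative is multiplied by an explicit factor $u$, one integration by parts transfers a derivative onto $u$ and cuts the top-order count by one.

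For the first inequality, I would differentiate (\ref{1.7}) in $x_\beta$ and test against $\nabla_\beta\Delta u^i$, while simultaneously testing (\ref{1.5}) against $-\Delta v^i$. The principal term $\nabla_\alpha[W_{p_\alpha^i}(u,\nabla u)]$ is treated exactly as in (\ref{2.5})--(\ref{2.6}) and produces the coercive dissipation $\tfrac{3a}{2}\int|\nabla^3 u|^2$ up to cubic/quartic gradient remainders bounded by $C\int|\nabla u|^2(|\nabla^2u|^2+|\nabla v|^2)$. The projection-type terms $u^ku^iV_{p_\alpha^k}$, $W_{u^k}u^ku^i$, $W_{p_\alpha^l}\nabla_\alpha u^l\,u^i$ and $V_{p_\alpha^k}u^k\nabla_\alpha u^i$ all carry an explicit $u$; integrating by parts against $\nabla_\beta\Delta u^i$ and invoking $u\cdot\nabla u=0$ reduces them to admissible cubic/quartic expressions. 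Similarly, the coupling term on the right of (\ref{1.5}) paired with $\Delta v^i$ yields cross terms controlled by $\int|\nabla u|^2(|\nabla^2u|^2+|\nabla v|^2)$ after Young's inequality. The convective terms $(v\cdot\nabla)v\cdot\Delta v$ and $\nabla_\beta[(v\cdot\nabla)u]\cdot\nabla_\beta\Delta u$ admit two alternative bounds: either place $|v|$ in $L^\infty_x$ via Hölder to get the $\int|v|^2(|\nabla v|^2+|\nabla^2 u|^2)$ bound, or integrate by parts and place $|\nabla v|$ in $L^\infty$ to get the $\int|\nabla v|(|\nabla v|^2+|\nabla^2 u|^2)$ alternative; the minimum of the two gives the stated right-hand side.

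For the second inequality (\ref{3.3}), I would apply $\nabla_\beta\nabla_\gamma$ to (\ref{1.7}) and pair with $\nabla_\gamma\nabla_\beta\Delta u^i$ to produce $\tfrac{3a}{2}\int|\nabla^4 u|^2$, and simultaneously differentiate (\ref{1.5}) in $x_\gamma$ and test with $-\nabla_\gamma\Delta v^i$ to produce $\int|\nabla^3 v|^2$. The critical new difficulty is the fourth-order contribution from the projection term $u^ku^iV_{p_\alpha^k}$: when two derivatives land on $V_{p_\alpha^k}$ and two on $\nabla_\gamma\nabla_\beta\Delta u^i$, there is a term of the schematic form $u^ku^i\cdot(\text{top order})$, which is reduced by integration by parts plus $u\cdot\nabla u=0$ as above. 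The remaining contributions land in the $L^4$-type quantities $\int|\nabla u|^2(|\Delta v|^2+|\nabla^3u|^2)$, $\int|\nabla^2u|^4$ and $\int|\nabla v|^4$, controlled by Gagliardo-Nirenberg. The two-sided bound on the convection term $\nabla[(v\cdot\nabla)v]\cdot\nabla\Delta v$ comes from the same dichotomy as before, yielding the $\min\{\int|v|^2|\nabla^2v|^2,\int|\nabla v||\nabla^2v|^2\}$ term.

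The main obstacle will be the combinatorial bookkeeping of the projection terms on the right-hand side of (\ref{1.7}): one must verify that every single top-order derivative produced by differentiating a product $u^ku^i\,(\cdots)$ twice is either absorbed by the coercive dissipation on the left or eliminated via the constraint $|u|=1$. This is the precise place where the exact unit-length condition plays a role that the Ginzburg-Landau penalization could not, and is what distinguishes the argument from the proof of Lemma \ref{lem5.1}.
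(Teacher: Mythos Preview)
Your proposal is correct and follows essentially the same approach as the paper: differentiate (\ref{1.7}) and test against suitable derivatives of $\Delta u$, test (\ref{1.5}) against $-\Delta v$ (respectively $\Delta^2 v$), use the coercivity $W_{p_\alpha^i p_\beta^j}\xi_\alpha^i\xi_\beta^j\geq a|\xi|^2$ for the principal part, and exploit the constraint $|u|=1$ via $u\cdot\Delta u=-|\nabla u|^2$ (and its higher analogue $u^i\Delta^2 u^i=-\Delta|\nabla u|^2-2\nabla u^i\nabla\Delta u^i-|\Delta u|^2$) to reduce the order of the projection terms carrying an explicit factor $u^i$. The only cosmetic difference is that for (\ref{3.3}) the paper multiplies (\ref{1.7}) directly by $-\Delta^3 u^i$ and (\ref{1.5}) by $\Delta^2 v^i$ rather than differentiating first, which is equivalent to your procedure after integration by parts; also note that the $\int|\nabla^2u|^4$ and $\int|\nabla v|^4$ terms are simply left on the right-hand side here (no Gagliardo--Nirenberg is needed within this lemma).
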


\begin{proof}

Differentiating (\ref{1.7}) with respect to $x_\beta$, multiplying the resulting equations by $-\nabla_\beta\Delta u^i$ and integrating over $\mathbb R^3$, we then obtain
\begin{align}
\frac{d}{dt}\int_{\mathbb R^3}&\frac{|\Delta u|^2}{2}dx=-\int_{\mathbb R^3}\nabla_{\alpha\beta}(W_{p_\alpha^i}(u,\nabla u))\nabla_\beta\Delta u^idx+\int_{\mathbb R^3}\nabla_\beta[\nabla_\alpha(u^ku^iV_{p_\alpha^k}(u,\nabla u)\nonumber\\
&+W_{u^i}(u,\nabla u)]\nabla_\beta\Delta u^i dx-\int_{\mathbb R^3}\nabla_\beta[u^ku^iW_{u^k}(u,\nabla u)+\nabla_\alpha u^kW_{p_\alpha^k}W_{p_\alpha^k}(u,\nabla u)u^i\nonumber\\
&+V_{p_\alpha^k}(u,\nabla u)u^k\nabla_\alpha u^i]\nabla_\beta\Delta u^idx+\int_{\mathbb R^3}\nabla_\beta(v\cdot\nabla u^i)\nabla_\beta\Delta u^idx. \label{4.1}
\end{align}
The terms on the right hand side of the above identity can be estimated term by term as follows. Estimates on all terms, except the last one, can be found in \cite{HX}. For reader's convenience, we rewrite them here. Similar to (\ref{2.5}), there holds
\begin{align}
&-\int_{\mathbb R^3}\nabla_{\alpha\beta}(W_{p_\alpha^i}(u,\nabla u))\nabla_\beta\Delta u^idx\nonumber\\
\leq&-(a-\eta)\int_{\mathbb R^3}|\nabla^3u|^2dx+C\int_{\mathbb R^3}(|\nabla u|^2|\nabla^2u|^2+|\nabla u|^6)dx.\label{4.2}
\end{align}
One checks that
\begin{align*}
\nabla_{\alpha_\beta}(u^ku^iV_{p_\alpha^k}&(u,\nabla u))
=\nabla_{\alpha\beta}(u^ku^i)V_{p_\alpha^k}(u,\nabla u)+\nabla_\alpha(u^ku^i)\nabla_\beta(V_{p_\alpha^k}(u,\nabla u))\\
&+\nabla_\beta(u^ku^i)\nabla_\alpha(V_{p_\alpha^k}(u,\nabla u))+u^ku^i\nabla_{\alpha\beta}(V_{p_\alpha^k}(u,\nabla u))\\
=&\nabla_{\alpha\beta}(u^ku^i)V_{p_\alpha^k}(u,\nabla u)+\nabla_\alpha(u^ku^i)\nabla_\beta(V_{p_\alpha^k}(u,\nabla u))\\
&+\nabla_\beta(u^ku^i)\nabla_\alpha(V_{p_\alpha^k}(u,\nabla u))+u^ku^i\nabla_{\alpha}(V_{u^jp_\alpha^k}(u,\nabla u)\nabla_\beta u^j)\\
&+u^ku^i\nabla_\alpha(V_{p_\alpha^kp_\sigma^j}(u,\nabla u))\nabla_{\beta\sigma}u^j+u^ku^iV_{p_\alpha^kp_\sigma^j}(u,\nabla u)\nabla_{\alpha\beta\sigma}u^j,
\end{align*}
which implies
\begin{align}
&\int_{\mathbb R^3}\nabla_{\alpha\beta}(u^ku^iV_{p_\alpha^k}(u,\nabla u))\nabla_\beta\Delta u^idx\nonumber\\
\leq&\int_{\mathbb R^3}u^ku^iV_{p_\alpha^kp_\sigma^j}(u,\nabla u)\nabla_{\alpha\beta\sigma}u^j\nabla_\beta\Delta u^idx\nonumber\\
&+C\int_{\mathbb R^3}(|\nabla u||\nabla^2u|+|\nabla u|^3)|\nabla^3u|dx\nonumber\\
\leq&C\int_{\mathbb R^3}[(|\nabla u||\nabla^2u|+|\nabla u|^3)|\nabla^3u|+|u^i\nabla_\beta\Delta u^i||\nabla^3u|]dx\nonumber\\
=&C\int_{\mathbb R^3}[(|\nabla u||\nabla^2u|+|\nabla u|^3)|\nabla^3u|+|\nabla_\beta(u^i\Delta u^i)-\nabla_\beta u^i\Delta u^i||\nabla^3u|]dx\nonumber\\
=&C\int_{\mathbb R^3}[(|\nabla u||\nabla^2u|+|\nabla u|^3)|\nabla^3u|+|\nabla_\beta(|\nabla u|^2+\nabla_\beta u^i\Delta u^i||\nabla^3u|]dx\nonumber\\
\leq&\eta\int_{\mathbb R^3}|\nabla^3u|^2dx+C\int_{\mathbb R^3}(|\nabla u|^2|\nabla^2u|^2+|\nabla u|^6)dx. \label{4.3}
\end{align}
Here, we have used the fact that $\Delta u\cdot u=-|\nabla u|^2$ guaranteed by $|u|=1$.
One can check easily that
\begin{align*}
&|\nabla_\beta[W_{u^i}(u,\nabla u)-u^ku^iW_{u^k}(u,\nabla u)\\
&-\nabla_\alpha u^kW_{p_\alpha^k}(u,\nabla u)u^i-V_{p_\alpha^k}(u,\nabla u)u^k\nabla_\alpha u^i]|\\
\leq&C(|\nabla u||\nabla^2u|+|\nabla u|^3).
\end{align*}
Thus
\begin{align}
&\int_{\mathbb R^3}\nabla_\beta[W_{u^i}(u,\nabla u)-u^ku^iW_{u^k}(u,\nabla u)\nonumber\\
&-\nabla_\alpha u^kW_{p_\alpha^k}(u,\nabla u)u^i-V_{p_\alpha^k}(u,\nabla u)u^k\nabla_\alpha u^i]]\nabla_\beta\Delta u^i dx\nonumber\\
\leq&\eta \int_{\mathbb R^3}|\nabla^3 u|^2dx+C\int_{\mathbb R^3}(|\nabla u|^6+|\nabla u|^2|\nabla^2u|^2)dx.\label{4.4}
\end{align}
It follows that
\begin{align*}
&\int_{\mathbb R^3}\nabla_\beta(v\cdot\nabla u^i)\nabla_\beta\Delta u^idx\\
=&\int_{\mathbb R^3}\nabla_\beta v\cdot\nabla u^i\nabla_\beta\Delta u^idx+\int_{\mathbb R^3}v\cdot\nabla\nabla_\beta v\nabla_\beta\Delta u^idx\\
=&\int_{\mathbb R^3}\nabla_\beta v\cdot\nabla u^i\nabla_\beta\Delta u^idx-\int_{\mathbb R^3}(\nabla_\alpha v\cdot\nabla\nabla_\beta u^i\nabla_{\alpha\beta}u^i+v\cdot\nabla\nabla_{\alpha\beta}u^i\nabla_{\alpha\beta}u^i)dx\\
=&\int_{\mathbb R^3}(\nabla_\beta v\cdot\nabla u^i\nabla_\beta\Delta u^i-\nabla_\alpha v\cdot\nabla\nabla_\beta u^i\nabla_{\alpha\beta}u^i)dx\\
\leq&C\int_{\mathbb R^3}(|\nabla v||\nabla u||\nabla^3u|+|\nabla v||\nabla^2u|^2)dx\\
\leq&\eta\int_{\mathbb R^3}|\nabla^3u|^2dx+C\int_{\mathbb R^3}(|\nabla v|^2|\nabla u|^2+|\nabla v||\nabla^2u|^2)dx,
\end{align*}
and
\begin{align*}
&\int_{\mathbb R^3}\nabla_\beta(v\cdot\nabla u^i)\nabla_\beta\Delta u^idx=\int_{\mathbb R^3}(\nabla_\beta v\cdot\nabla u^i+v\cdot\nabla\nabla_\beta u^i)\nabla_\beta\Delta u^idx\\
\leq&\eta\int_{\mathbb R^3}|\nabla^3u|^2dx+C\int_{\mathbb R^3}(|\nabla v|^2|\nabla u|^2+|v|^2|\nabla^2u|^2)dx.
\end{align*}
Hence, it holds that
\begin{align}
\int_{\mathbb R^3}\nabla_\beta(v\cdot\nabla u^i)\nabla_\beta\Delta u^idx
\leq& C\int_{\mathbb R^3}|\nabla u|^2|\nabla v|^2dx+\eta\int_{\mathbb R^3}|\nabla^3u|^2dx\nonumber\\
&+C\min\left\{\int_{\mathbb R^3}|v|^2|\nabla^2u|^2dx, \int_{\mathbb R^3}|\nabla v||\nabla^2u|^2dx\right\}.\label{4.5}
\end{align}
Substituting (\ref{4.2})--(\ref{4.5}) into (\ref{4.1}) yields
\begin{align}
&\frac{d}{dt}\int_{\mathbb R^3}|\Delta u|^2dx+a\int_{\mathbb R^3}|\nabla^3u|^2dx\nonumber\\
\leq&C\int_{\mathbb R^3}|\nabla u|^2(|\nabla u|^4+|\nabla^2u|^2+|\nabla v|^2)dx\nonumber\\
&+C\min\left\{\int_{\mathbb R^3}|v|^2|\nabla^2u|^2dx, \int_{\mathbb R^3}|\nabla v||\nabla^2u|^2dx\right\}.\label{4.6}
\end{align}

Multiplying (\ref{1.5}) by $-\Delta v^i$ and integrating the resulting equation over $\mathbb R^3$ yields
\begin{align*}
\frac{d}{dt}\int_{\mathbb R^3}\frac{|\nabla v|^2}{2}dx+\int_{\mathbb R^3}|\Delta v|^2dx=\int_{\mathbb R^3}(v\cdot\nabla v^i+\nabla_j(\nabla_iu^kW_{p_j^k}(u,\nabla u))\Delta v^idx.
\end{align*}
It follows that
\begin{align*}
\int_{\mathbb R^3}v\cdot\nabla v^i\Delta v^idx\leq\eta\int_{\mathbb R^3}|\Delta v|^2dx+C\int_{\mathbb R^3}|v|^2|\nabla v|^2dx,
\end{align*}
and
\begin{align*}
\int_{\mathbb R^3}v\cdot\nabla v^i\Delta v^idx=-\int_{\mathbb R^3}(\nabla_\beta v\cdot\nabla v^i\nabla_\beta v^i+v\cdot\nabla\nabla_\beta v^i\nabla_\beta v^i)dx\leq\int_{\mathbb R^3}|\nabla v|^3dx.
\end{align*}
Since $|\nabla_j(\nabla_iu^kW_{p_j^k}(u,\nabla u)|\leq C(|\nabla u||\nabla^2u|+|\nabla u|^3$, it follows that
$$
\int_{\mathbb R^3}\nabla_j\nabla_iu^kW_{p_j^k}(u,\nabla u)\Delta v^idx\leq\eta\int_{\mathbb R^3}|\Delta v|^2dx+C\int_{\mathbb R^3}|\nabla u|^2(|\nabla u|^4+|\nabla^2u|^2)dx.
$$
Hence
\begin{align*}
&\frac{d}{dt}\int_{\mathbb R^3}|\nabla v|^2dx+\int_{\mathbb R^3}|\Delta v|^2dx\nonumber\\
\leq& C\int_{\mathbb R^3}|\nabla u|^2(|\nabla u|^4+|\nabla^2u|^2)dx+C\min\left\{\int_{\mathbb R^3}|v|^2|\nabla v|^2dx,\int_{\mathbb R^3}|\nabla v|^3dx\right\}.
\end{align*}
Combining this with (\ref{4.6}) gives
\begin{align}
&\frac{d}{dt}\int_{\mathbb R^3}(|\nabla v|^2+|\Delta u|^2)dx+\int_{
\mathbb R^3}(|\Delta v|^2+|\nabla^3u|^2)dx\nonumber\\
\leq&C\min\left\{\int_{\mathbb R^3}|v|^2|(|\nabla v|^2+\nabla^2u|^2)dx, \int_{\mathbb R^3}|\nabla v|(|\nabla v|^2+|\nabla^2u|^2)dx\right\}\nonumber\\
&+C\int_{\mathbb R^3}|\nabla u|^2(|\nabla u|^4+|\nabla^2u|^2+|\nabla v|^2)dx\nonumber\\
\leq&C\min\left\{\int_{\mathbb R^3}|v|^2|(|\nabla v|^2+\nabla^2u|^2)dx, \int_{\mathbb R^3}|\nabla v|(|\nabla v|^2+|\nabla^2u|^2)dx\right\}\nonumber\\
&+C\int_{\mathbb R^3}|\nabla u|^2(|\nabla^2u|^2+|\nabla v|^2)dx.\label{4.7}
\end{align}
In the last step of the above inequality, we have used the fact that $|\nabla u|^2=-\Delta u\cdot u$.

Now we prove (\ref{3.3}). Multiplying (\ref{1.5}) by $\Delta^2v^i$ and integrating the resulting equation over $\mathbb R^3$ yield
\begin{align}
\frac{d}{dt}\int_{\mathbb R^3}\frac{|\Delta v|^2}{2}dx+\int_{\mathbb R^3}|\nabla^3v|^2dx=\int_{\mathbb R^3}[\nabla_j(\nabla_iu^kW_{p_j^k}(u,\nabla u))-v\cdot\nabla v^i]\Delta^2v^idx.\label{4.8}
\end{align}
It follows that
\begin{align}
-\int_{\mathbb R^3}v\cdot\nabla v^i\Delta^2v^idx=&-\int_{\mathbb R^3}(\Delta v\cdot\nabla v^i+2\nabla v\cdot\nabla^2v^i+v\nabla\Delta v^i)\Delta v^idx\nonumber\\
\leq& C\int_{\mathbb R^3}|\nabla v||\nabla^2v|^2,\label{4.9}
\end{align}
and
\begin{align}
-\int_{\mathbb R^3}v\cdot\nabla v^i\Delta^2v^idx=&\int_{\mathbb R^3}(\nabla_jv\cdot\nabla v^i+v\cdot\nabla\nabla_jv^i)\nabla_j\Delta v^idx\nonumber\\
\leq&\eta\int_{\mathbb R^3}|\nabla^3v|^2dx+C\int_{\mathbb R^3}(|\nabla v|^4+|v|^2|\nabla^2v|^2dx. \label{4.10}
\end{align}
Notice that
\begin{align*}
\int_{\mathbb R^3}|\nabla v|^4dx=&-\int_{\mathbb R^3}\textmd{div}(|\nabla v|^2\nabla v)vdx\leq C\int_{\mathbb R^3}|v||\nabla v|^2|\nabla^2v|dx\\
\leq&\frac{1}{2}\int_{\mathbb R^3}|\nabla v|^4dx+C\int_{\mathbb R^3}|v|^2|\nabla^2 v|^2dx,
\end{align*}
which implies
$$
\int_{\mathbb R^3}|\nabla v|^4dx\leq C\int_{\mathbb R^3}|v|^2|\nabla^2v|^2dx.
$$
This, together with (\ref{4.9})--(\ref{4.10}), shows
\begin{align}
-\int_{\mathbb R^3}v\cdot\nabla v^i\Delta^2v^idx\leq\eta\int_{\mathbb R^3}|\nabla^3v|^2dx+C\min\left\{\int_{\mathbb R^3}|v|^2|\nabla^2v|^2dx,\int_{\mathbb R^3}|\nabla v||\nabla^2v|^2\right\}.\label{4.11}
\end{align}
Note that
\begin{align*}
|\nabla_{jl}[\nabla_iu^kW_{p_j^k}(u,\nabla u)]|\leq& C(|\nabla u||\nabla^3u|+|\nabla u|^2|\nabla^2u|+|\nabla^2u|^2+|\nabla u|^4)\\
\leq&C(|\nabla u||\nabla^3u|+|\nabla^2u|^2),
\end{align*}
where we have used $|\nabla u|^2\leq|\Delta u|$. Then
\begin{align}
&\int_{\mathbb R^3}\nabla_j(\nabla_iu^kW_{p_j^k}(u,\nabla u))\Delta^2v^idx\nonumber\\
=&-\int_{\mathbb R^3}\nabla_{jl}(\nabla_iu^k W_{p_j^k}(u,\nabla u))\nabla_l\Delta v^idx\nonumber\\
\leq&C\int_{\mathbb R^3}(|\nabla u||\nabla^3u|+|\nabla^2u|^2)|\nabla^3v|dx\nonumber\\
\leq&\eta\int_{\mathbb R^3}|\nabla^3v|^2dx+C\int_{\mathbb R^3}(|\nabla u|^2|\nabla^3u|^2+|\nabla^2u|^4)dx.\label{4.12}
\end{align}
It follows that
\begin{align*}
\int_{\mathbb R^3}|\nabla^2u|^4dx=&-\int_{\mathbb R^3}\nabla_j(|\nabla^2u|^2\nabla_{ij}u)\nabla_iudx\leq\int_{\mathbb R^3}|\nabla u||\nabla^2u|^2|\nabla^3u|dx\\
\leq&\frac{1}{2}\int_{\mathbb R^3}|\nabla^2u|^2dx+C\int_{\mathbb R^3}|\nabla u|^2|\nabla^3u|^2dx,
\end{align*}
which, together with (\ref{4.12}), gives
\begin{align*}
\int_{\mathbb R^3}\nabla_j(\nabla_iu^kW_{p_j^k}(u,\nabla u))\Delta^2v^idx
\leq\eta\int_{\mathbb R^3}|\nabla^3v|^2dx+C\int_{\mathbb R^3}|\nabla u|^2|\nabla^3u|^2dx.
\end{align*}
Substitute the above inequality and (\ref{4.11}) into (\ref{4.8}) to get
\begin{align}
&\frac{d}{dt}\int_{\mathbb R^3}|\Delta v|^2dx+\int_{\mathbb R^3}|\nabla^3v|^2dx\nonumber\\
\leq&C\int_{\mathbb R^3}|\nabla u|^2|\nabla^3u|^2dx+C\min\left\{\int_{\mathbb R^3}|v|^2|\nabla^2v|^2dx,\int_{\mathbb R^3}|\nabla v||\nabla^2v|^2\right\}. \label{4.13}
\end{align}

Multiplying (\ref{1.7}) by $-\Delta^3u^i$ and integrating over $\mathbb R^3$ lead to
\begin{align}
\frac{d}{dt}\int_{\mathbb R^3}&\frac{|\nabla^3u|^2}{2}dx=-\int_{\mathbb R^3}\nabla_\alpha(W_{p_\alpha^i}(u,\nabla u))\Delta^3u^idx+\int_{\mathbb R^3}[\nabla_\alpha(u^ku^iV_{p_\alpha^k}(u,\nabla u))\nonumber\\
&+W_{u^i}(u,\nabla u)]\Delta^3u^idx-\int_{\mathbb R^3}[W_u^k(u,\nabla u)u^ku^i+W_{p_\alpha^k}(u,\nabla u)\nabla_\alpha u^ku^i\nonumber\\
&+V_{p_\alpha^k}(u,\nabla u)u^k\nabla_\alpha u^i]\Delta^3u^idx+\int_{\mathbb R^3}v\nabla u^i\Delta^3u^idx. \label{4.14}
\end{align}
Direct calculations give
\begin{align*}
&\nabla_{\beta\gamma\sigma}(W_{p_\alpha^i}(u,\nabla u))=\nabla_{\beta\gamma}[W_{u^jp_\alpha^i}(u,\nabla u)\nabla_\sigma u^j+W_{p_\alpha^ip_l^j}(u,\nabla u)\nabla_{\sigma l}u^j]\\
=&\nabla_{\beta\gamma}[W_{u^jp_\alpha^i}(u,\nabla u)\nabla_\sigma u^j]+\nabla_{\beta\gamma}[W_{p_\alpha^ip_l^j}(u,\nabla u)]\nabla_{\sigma l}u^j+\nabla_{\beta}[W_{p_\alpha^ip_l^j}(u,\nabla u)]\nabla_{\gamma\sigma l}u^j\\
&+\nabla_{\gamma}[W_{p_\alpha^ip_l^j}(u,\nabla u)]\nabla_{\beta\sigma l}u^j+W_{p_\alpha^ip_l^j}(u,\nabla u)\nabla_{\beta\gamma\sigma l}u^j.
\end{align*}
Due to $W_{p_\alpha^ip_\beta^j}(z,p)\xi_\alpha^i\xi_\beta^j\geq a|\xi|^2$ and
\begin{align*}
&|\nabla_{\beta\gamma}[W_{u^jp_\alpha^i}(u,\nabla u)\nabla_\sigma u^j]+\nabla_{\beta\gamma}[W_{p_\alpha^ip_l^j}(u,\nabla u)]\nabla_{\sigma l}u^j\\
&+\nabla_{\beta}[W_{p_\alpha^ip_l^j}(u,\nabla u)]\nabla_{\gamma\sigma l}u^j+\nabla_{\gamma}[W_{p_\alpha^ip_l^j}(u,\nabla u)]\nabla_{\beta\sigma l}u^j|\\
\leq& C(|\nabla u||\nabla^3u|+|\nabla u|^2|\nabla^2u|+|\nabla^2 u|^2)\leq C(|\nabla u||\nabla^3u|+|\nabla^2 u|^2),
\end{align*}
one can get
\begin{align}
&-\int_{\mathbb R^3}\nabla_\alpha(W_{p_\alpha^i}(u,\nabla u))\Delta^3u^idx=-\int_{\mathbb R^3}\nabla_{\beta\gamma\sigma}(W_{p_\alpha^i}(u,\nabla u))\nabla_{\alpha\beta\gamma\sigma}u^idx\nonumber\\
\leq&-a\int_{\mathbb R^3}|\nabla^4u|^2dx+C\int_{\mathbb R^3}(|\nabla u||\nabla^3u|+|\nabla^2 u|^2)|\nabla^4u|dx\nonumber\\
\leq&-(a-\eta)\int_{\mathbb R^3}|\nabla^4u|^2dx+C\int_{\mathbb R^3}(|\nabla u|^2|\nabla^3u|^2+|\nabla^2 u|^4)dx.\label{4.15}
\end{align}
Next
\begin{align*}
\nabla_\alpha\Delta&[u^ku^iV_{p_\alpha^k}(u,\nabla u)]=\nabla_\alpha[\Delta(u^ku^i)V_{p_\alpha^k}(u,\nabla u)\\
&+2\nabla(u^ku^i)\nabla(V_{p_\alpha^k}(u,\nabla u))+u^ku^i\Delta(V_{p_\alpha^k}(u,\nabla u)]\\
=&\nabla_\alpha\Delta(u^ku^i)V_{p_\alpha^k}(u,\nabla u)+\Delta(u^ku^i)\nabla_\alpha[V_{p_\alpha^k}(u,\nabla u)]\\
&+2\nabla_\alpha\nabla(u^ku^i)\nabla(V_{p_\alpha^k}(u,\nabla u))+2\nabla(u^ku^i)\nabla_\alpha\nabla(V_{p_\alpha^k}(u,\nabla u))\\
&+\nabla_\alpha(u^ku^i)\Delta(V_{p_\alpha^k}(u,\nabla u))+u^ku^i\Delta\nabla_\alpha(V_{p_\alpha^k}(u,\nabla u)).
\end{align*}
Note that
\begin{align*}
&|\nabla_\alpha\Delta(u^ku^i)V_{p_\alpha^k}(u,\nabla u)+\Delta(u^ku^i)\nabla_\alpha[V_{p_\alpha^k}(u,\nabla u)]\\
&+2\nabla_\alpha\nabla(u^ku^i)\nabla(V_{p_\alpha^k}(u,\nabla u))+2\nabla(u^ku^i)\nabla_\alpha\nabla(V_{p_\alpha^k}(u,\nabla u))\\
&+\nabla_\alpha(u^ku^i)\Delta(V_{p_\alpha^k}(u,\nabla u))|\\
\leq&C[(|\nabla u||\nabla^2u|+|\nabla^3u|)|\nabla u|+(|\nabla u|^2+|\nabla^2u|)(|\nabla u|^2+|\nabla^2u|)\\
&+|\nabla u|(|\nabla^3u|+|\nabla u||\nabla^2u|+|\nabla u|^3)\\
\leq&C(|\nabla u||\nabla^3u|+|\nabla u|^2|\nabla^2u|+|\nabla^2u|^2+|\nabla u|^4)\\
\leq&C(|\nabla u||\nabla^3u|+|\nabla^2u|^2)
\end{align*}
and
\begin{align*}
|\Delta\nabla_\alpha(V_{p_\alpha^k}(u,\nabla u))|\leq& C(|\nabla^4u|+|\nabla^3u||\nabla u|+|\nabla^2u|^2+|\nabla^2u||\nabla u|^2)\\
\leq&C(|\nabla^4u|+|\nabla^3u||\nabla u|+|\nabla^2u|^2),
\end{align*}
where we have used again $|\nabla u|^2=-\Delta u\cdot u$. Hence
\begin{align*}
&\int_{\mathbb R^3}\nabla_\alpha[u^ku^iV_{p_\alpha^k}(u,\nabla u)]\Delta^3u^idx=\int_{\mathbb R^3}\nabla_\alpha\Delta[u^ku^iV_{p_\alpha^k}(u,\nabla u)]\Delta^2u^idx\\
\leq&C\int_{\mathbb R^3}(|\nabla u||\nabla^3u|+|\nabla^2u|^2)|\Delta^2u|dx+\int_{\mathbb R^3}u^ku^i\Delta^2u^i\Delta\nabla_\alpha(V_{p_\alpha^k}(u,\nabla u))dx.
\end{align*}
Since
\begin{align*}
|u^i\Delta^2u^i|=&|\Delta(u^i\Delta u^i)-2\nabla u^i\nabla\Delta u^i-|\Delta u^i|^2|\\
=&|\Delta(|\nabla u|^2)+2\nabla u^i\nabla\Delta u^i+|\Delta u^i|^2|\leq C(|\nabla u||\nabla^3u|+|\nabla^2u|^2),
\end{align*}
we arrive at
\begin{align}
&\int_{\mathbb R^3}\nabla_\alpha[u^ku^iV_{p_\alpha^k}(u,\nabla u)]\Delta^3u^idx
\leq C\int_{\mathbb R^3}[(|\nabla u||\nabla^3u|+|\nabla^2u|^2)|\Delta^2u|\nonumber\\
&+(|\nabla^4u|+|\nabla^3u||\nabla u|+|\nabla^2u|^2)(|\nabla u||\nabla^3u|+|\nabla^2u|^2)]dx\nonumber\\
\leq&\eta\int_{\mathbb R^3}|\nabla^4u|^2dx+C\int_{\mathbb R^3}(|\nabla u|^2|\nabla^3u|^2+|\nabla^2u|^4)dx. \label{4.16}
\end{align}
One can check that
\begin{align*}
&|\Delta[W_{u^i}(u,\nabla u)-W_{u^k}(u,\nabla u)u^ku^i-W_{p_\alpha^k}(u,\nabla u)\nabla_\alpha u^ku^i-V_{p_\alpha^k}(u,\nabla u)u^k\nabla_\alpha u^i]\\
\leq&C(|\nabla u||\nabla^3u|+|\nabla u|^2|\nabla^2u|+|\nabla^2u|^2+|\nabla u|^4)\leq C(|\nabla u||\nabla^3u|+|\nabla^2u|^2),
\end{align*}
which implies
\begin{align}
&\int_{\mathbb R^3}[W_{u^i}(u,\nabla u)-W_{u^k}(u,\nabla u)u^ku^i\nonumber\\
&-W_{p_\alpha^k}(u,\nabla u)\nabla_\alpha u^ku^i-V_{p_\alpha^k}(u,\nabla u)u^k\nabla_\alpha u^i]\Delta^3u^idx\nonumber\\
=&\int_{\mathbb R^3}\Delta[W_{u^i}(u,\nabla u)-W_{u^k}(u,\nabla u)u^ku^i\nonumber\\
&-W_{p_\alpha^k}(u,\nabla u)\nabla_\alpha u^ku^i-V_{p_\alpha^k}(u,\nabla u)u^k\nabla_\alpha u^i]\Delta^2u^idx\nonumber\\
\leq&C\int_{\mathbb R^3}(|\nabla u||\nabla^3u|+|\nabla^2u|^2)|\nabla^4u|dx\nonumber\\
\leq&\eta\int_{\mathbb R^3}|\nabla^4u|^2dx+C\int_{\mathbb R^3}(|\nabla u|^2|\nabla^3u|^2+|\nabla^2u|^4)dx.\label{4.17}
\end{align}
Integrating by parts gives
\begin{align}
&\int_{\mathbb R^3}v\cdot\nabla u^i\Delta^3u^idx=\int_{\mathbb R^3}(\Delta v\cdot\nabla u^i+2\nabla v\cdot\nabla^2u^i+v\cdot\nabla\Delta u^i)\Delta^2u^idx\nonumber\\
=&\int_{\mathbb R^3}[(\Delta v\cdot\nabla u^i+2\nabla v\cdot\nabla^2u^i)\Delta^2 u^i-\nabla_jv\cdot\nabla\Delta u^i\nabla_j\Delta u^i-v\cdot\nabla\Delta\nabla_ju^i\Delta\nabla_ju^i]dx\nonumber\\
=&\int_{\mathbb R^3}[(\Delta v\cdot\nabla u^i+2\nabla v\cdot\nabla^2u^i)\Delta^2 u^i-\nabla_jv\cdot\nabla\Delta u^i\nabla_j\Delta u^i]dx\nonumber\\
=&\int_{\mathbb R^3}[(\Delta v\cdot\nabla u^i+2\nabla v\cdot\nabla^2u^i)\Delta^2 u^i+\Delta u^i\nabla_jv\cdot\nabla_j\nabla\Delta u^i]dx\nonumber\\
\leq&\eta\int_{\mathbb R^3}|\nabla^4u|^2dx+C\int_{\mathbb R^3}(|\nabla u|^2|\Delta v|^2+|\nabla v|^2|\nabla^2u|^2)dx\nonumber\\
\leq&\eta\int_{\mathbb R^3}|\nabla^4u|^2dx+C\int_{\mathbb R^3}(|\nabla u|^2|\Delta v|^2+|\nabla^2u|^4+|\nabla v|^4)dx. \label{4.18}
\end{align}
Substituting (\ref{4.15})--(\ref{4.18}) into (\ref{4.14}) yields
\begin{align*}
&\frac{d}{dt}\int_{\mathbb R^3}|\nabla^3u|^2dx+a\int_{\mathbb R^3}|\nabla^4u|^2dx\nonumber\\
\leq& C\int_{\mathbb R^3}[|\nabla u|^2(|\Delta v|^2+|\nabla^3u|^2)+|\nabla^2u|^4+|\nabla v|^4]dx,
\end{align*}
which, combined with (\ref{4.13}), gives
\begin{align*}
&\frac{d}{dt}\int_{\mathbb R^3}(|\nabla^3u|^2+|\Delta v|^2)dx+\int_{\mathbb R^3}(a|\nabla^4u|^2+|\nabla^3v|^2)dx\nonumber\\
\leq&C\int_{\mathbb R^3}[|\nabla u|^2(|\Delta v|^2+|\nabla^3u|^2)+|\nabla^2u|^4+|\nabla v|^4]dx\nonumber\\
&+C\min\left\{\int_{\mathbb R^3}|v|^2|\nabla^2v|^2dx,\int_{\mathbb R^3}|\nabla v||\nabla^2v|^2\right\}.
\end{align*}
This proves (\ref{3.3}).
\end{proof}

We also need the following logarithmic type Sobolev inequality to control $L^\infty$ norm of $\nabla v$ in term of its $BMO$ and higher order norms.

\begin{lemma}\label{lem4.2}
For any $f\in L^1(0,T; BMO(\mathbb R^3))\cap L^1(0, T; L^q(\mathbb R^3))$ and $\nabla f\in L^1$ $(0, T; L^p(\mathbb R^3))$ with $p\in(3,\infty)$ and $q\in[1,\infty)$, it holds that
\begin{align*}
\int_s^t\|f(\tau)\|_{L^\infty}d\tau\leq &C\left[\int_s^t[f(\tau)]_{BMO}d\tau\ln\left(1+\int_s^t\|\nabla f(\tau)\|_{L^p}d\tau\right)\right.\\
&\left.+\int_s^t\|f\|_{L^q}d\tau+1\right],
\end{align*}
with $C$ being a positive constant depending only on $p$ and $q$.
\end{lemma}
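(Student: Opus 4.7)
The plan is to establish a pointwise interpolation inequality of Kozono--Taniuchi / Brezis--Gallouet--Wainger type with a \emph{free} dyadic cutoff parameter, integrate it in time, and only then optimize the parameter globally (not pointwise in $\tau$). The payoff of doing the optimization globally is precisely that it pulls the logarithm outside the time integral, which is the form required by the lemma.

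Concretely, I would decompose $f=\Delta_{-1}f+\sum_{j\ge 0}\Delta_j f$ via the inhomogeneous Littlewood--Paley partition of unity, and for each integer $j_0\ge 1$ split the sum into the very-low-frequency piece $\Delta_{-1}f$, the mid-range $0\le j<j_0$, and the high-frequency tail $j\ge j_0$. Bernstein's inequality gives $\|\Delta_{-1}f\|_{L^\infty}\le C\|f\|_{L^q}$ (using $q<\infty$); the standard estimate $\|\Delta_j f\|_{L^\infty}\le C[f]_{BMO}$, valid for each $j\ge 0$ and coming from the mean-zero property of the convolution kernel together with the definition of $BMO$, gives $\sum_{0\le j<j_0}\|\Delta_j f\|_{L^\infty}\le Cj_0[f]_{BMO}$; and Bernstein combined with $\|\Delta_j f\|_{L^p}\le C2^{-j}\|\nabla f\|_{L^p}$ yields $\|\Delta_j f\|_{L^\infty}\le C2^{j(3/p-1)}\|\nabla f\|_{L^p}$, whose tail sums geometrically (since $p>3$) to $C2^{j_0(3/p-1)}\|\nabla f\|_{L^p}$. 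Putting these together I obtain, for every integer $j_0\ge 1$ and a.e.\ $\tau\in(s,t)$,
\begin{equation*}
\|f(\tau)\|_{L^\infty}\le C\|f(\tau)\|_{L^q}+Cj_0[f(\tau)]_{BMO}+C2^{j_0(3/p-1)}\|\nabla f(\tau)\|_{L^p}.
\end{equation*}

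Integrating over $\tau\in[s,t]$ preserves $j_0$ as a free parameter, so I pick
\begin{equation*}
j_0=1+\left\lceil\frac{\log\bigl(e+\int_s^t\|\nabla f(\tau)\|_{L^p}d\tau\bigr)}{(1-3/p)\log 2}\right\rceil,
\end{equation*}
which simultaneously gives $2^{j_0(3/p-1)}\int_s^t\|\nabla f\|_{L^p}d\tau\le C$ and $j_0\le C\bigl(1+\log(1+\int_s^t\|\nabla f\|_{L^p}d\tau)\bigr)$. Substituting back and using $\log(e+x)\le 1+\log(1+x)$ to consolidate additive constants (any leftover $\int_s^t[f]_{BMO}d\tau$ can be absorbed into the log-times-BMO term together with the final $+1$) then yields the stated inequality.

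The principal obstacle is exactly the requirement that the logarithm sit outside the time integral. A direct pointwise Kozono--Taniuchi bound followed by Jensen's inequality goes the wrong way because $\log$ is concave, so a naive route would produce $\int_s^t[f]_{BMO}\log(1+\|\nabla f\|_{L^p})d\tau$, which does \emph{not} dominate $\int_s^t[f]_{BMO}d\tau\cdot\log(1+\int_s^t\|\nabla f\|_{L^p}d\tau)$. The device of keeping $j_0$ free in the pointwise estimate and fixing it \emph{only after} integration in $\tau$ is what bypasses this difficulty without invoking Jensen-type arguments at all.
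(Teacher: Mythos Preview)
Your proof is correct and follows the same structural idea as the paper's --- establish a pointwise bound with a free scale parameter, integrate in time, and only then optimize --- but the implementation is genuinely different. The paper works in physical space: for a free radius $r<1$ it writes $f(x)=(f(x)-f_r(x))+f_r(x)$, bounds the oscillation term by $Cr^{1-3/p}\|\nabla f\|_{L^p}$ via the Morrey--Sobolev embedding, and controls $|f_r(x)|$ by a dyadic telescope $\sum_{j=1}^{k}|f_{2^{j-1}r}-f_{2^jr}|+|f_{2^kr}|$ (with $2^kr\sim 1$), which gives $C(1+|\ln r|)[f]_{BMO}+C\|f\|_{L^q}$. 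Integrating and choosing $r=(1+\int_s^t\|\nabla f\|_{L^p})^{-p/(p-3)}$ finishes. Your Littlewood--Paley argument replaces the scale $r$ by the cutoff frequency $2^{j_0}$, the Morrey oscillation estimate by Bernstein on the high-frequency tail, and the dyadic telescope of ball averages by the uniform bound $\|\Delta_j f\|_{L^\infty}\le C[f]_{BMO}$. Both routes yield the same three-term interpolation inequality; your approach sits more naturally in the Kozono--Taniuchi framework and generalizes readily to other endpoint spaces, while the paper's is entirely elementary (no Fourier analysis, only H\"older, Morrey, and the definition of BMO). Your explicit identification of \emph{why} the optimization must be done after time integration is a point the paper leaves implicit. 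One small remark: the ``absorption'' of the leftover $\int_s^t[f]_{BMO}d\tau$ into the stated right-hand side is not literally valid when $\int_s^t\|\nabla f\|_{L^p}d\tau$ is very small, but the paper's proof has exactly the same imprecision, and it is harmless in the applications.
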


\begin{proof}
Set
$$
f_r(x)=\frac{1}{|B_r|}\int_{B_r(x)}f(y)dy.
$$
For any $r\geq 1$, we apply H\"older's inequality to obtain
\begin{align*}
|f_r(x)|\leq \frac{C}{r^3}\int_{B_r(x)}|f(y)|dy\leq C\left(\frac{1}{r^3}\int_{B_r(x)}|f(y)|^qdy\right)^{1/q}\leq C \|f\|_{L^q(\mathbb R^3)}.
\end{align*}
For any $r<1$, there is a unique $k\in\mathbb N$, such that
$$
\frac{1}{2^{k}}\leq r<\frac{1}{2^{k-1}},\quad k\leq C(1+|\ln r|),
$$
and thus
\begin{align*}
|f_r(x)|\leq&\sum_{j=1}^k|f_{2^{j-1}r}(x)-f_{2^jr}(x)|+|f_{2^kr}(x)|\\
\leq&\sum_{j=1}^k\frac{1}{|B_{2^{j-1}r}|}\int_{B_{2^{j-1}r}(x)}|f(y)-f_{2^jr(x)}|dy+C\|f\|_{L^q(\mathbb R^3)}\\
\leq&\sum_{j=1}^k\frac{C}{|B_{2^{j}r}|}\int_{B_{2^{j}r}(x)}|f(y)-f_{2^jr(x)}|dy+C\|f\|_{L^q(\mathbb R^3)}\\
\leq&Ck[f]_{BMO(\mathbb R^3)}+C \|f\|_{L^q(\mathbb R^3)}\\
\leq&C(1+|\ln r|)[f]_{BMO(\mathbb R^3)}+C\|f\|_{L^q(\mathbb R^3)}.
\end{align*}
By a variant of the Sobolev embedding theorem (see e.g. page 268 of \cite{EVANS}), the above inequalities give
\begin{align*}
|f(x)|\leq&|f(x)-f_r(x)|+|f_r(x)|\\
\leq&Cr^{1-3/p}\|\nabla f\|_{L^p(\mathbb R^3)}+C(1+|\ln r|)[f]_{BMO(\mathbb R^3)}+C\|f\|_{L^q(\mathbb R^3)}
\end{align*}
for any $r<1$.
Integrating the above inequality over $(s, t)$ yields
\begin{align*}
\int_s^t\|f\|_{L^\infty}d\tau\leq &Cr^{1-3/p}\int_s^t\|\nabla f\|_{L^p(\mathbb R^3)}d\tau+C(1+|\ln r|)\int_s^t[f]_{BMO(\mathbb R^3)}d\tau\\
&+\int_s^t\|f\|_{L^q(\mathbb R^3)d\tau}
\end{align*}
for any $r<1$. Taking $r=\left(1+\int_s^t\|\nabla f\|_{L^p(\mathbb R^3)}d\tau\right)^{-p/(p-3)}$ in the above inequality proves the lemma.
\end{proof}

Now we finish the proof of the blow up criteria in Theorem \ref{thm1}.

 \begin {proof}  [Proof of the blow up criteria in Theorem \ref{thm1}]
Let $T^*$ be the maximum existence time for the strong solution $(u, v)$ to the system (\ref{1.5})--(\ref{1.7}). Suppose, by contradiction, that the conclusion fails. Then both the following two hold true
\begin{itemize}
  \item $\|\Delta u\|_{L^1(0,T^*; L^\infty(\mathbb R^3))}<\infty$ or $\|\nabla u\|_{L^{q_1}(0,T^*;L^{r_1}(\mathbb R^3))}<\infty$
for some $q_1\in[2,\infty), r_1\in(3,\infty)$ with $\frac{2}{q_1}+\frac{3}{r_1}=1$;
  \item $\|\omega\|_{L^1(0,T^*; BMO(\mathbb R^3))}<\infty$ or $\|v\|_{L^q(0,T^*; L^r(\mathbb R^3))}<\infty$ for some $q_2\in[2,\infty), r_2\in(3,\infty)$ with $\frac{2}{q_2}+\frac{3}{r_2}=1$.
\end{itemize}

By the Sobolev embedding inequality and the H\"older inequality, there holds
\begin{align}
\int_{\mathbb R^3}|w|^2|f|^2dx\leq \|w\|_{L^r}^2\|f\|_{L^2}^{\frac{2r-6}{r}}\|f\|_{L^6}^{\frac{6}{r}}\leq\eta\|\nabla f\|_{L^2}^2+C\|w\|_{L^r}^q\|f\|_{L^2}^2\label{4.19}
\end{align}
for any $r\in(3,\infty], q\in[2,\infty)$ with $\frac{2}{q}+\frac{3}{r}=1$. Note that $[\nabla v]_{BMO(\mathbb R^3)}\leq[\omega]_{BMO(\mathbb R^3)}$ by elliptic estimates. By the aid of this inequality, (\ref{4.19}), and $|\nabla u|^2\leq|\Delta u|$, one can get from Lemma \ref{lem4.1} and Lemma \ref{lem4.2} that
\begin{align}
&\left[\int_{\mathbb R^3}(|\nabla v|^2+|\Delta u|^2)dx\right](t)+\int_s^t\int_{\mathbb R^3}(|\Delta v|^2+a|\nabla^3u|^2)dxd\tau\nonumber\\
\leq&\left[\int_{\mathbb R^3}(|\nabla v|^2+|\Delta u|^2)dx\right](s)\exp\left\{C\min\left\{\int_s^t\|v\|_{L^{r_2}}^{q_2}d\tau,\int_s^t\|\nabla v\|_{L^\infty}d\tau\right\}\right\}\nonumber\\
&\times\exp\left\{C\min\left\{\int_s^t\|\nabla u\|_{L^{r_1}}^{q_1}d\tau,\int_s^t\|\Delta u\|_{L^\infty}d\tau\right\}\right\}\nonumber\\
\leq&C\exp\left\{C\min\left\{\int_s^t\|v\|_{L^{r_2}}^{q_2}d\tau,\int_s^t[\nabla v]_{BMO}d\tau\ln\left(1+\int_s^t\|\nabla^3v\|_{L^2}d\tau\right)\right.\right.\nonumber\\
&\left.\left.+\int_s^t\|\nabla v\|_{L^2(\mathbb R^3)}^2d\tau\right\}\right\}\left[\int_{\mathbb R^3}(|\nabla v|^2+|\Delta u|^2)dx\right](s)\nonumber\\
\leq&C\exp\left\{C\min\left\{\int_s^t\|v\|_{L^{r_2}}^{q_2}d\tau,\int_s^t[\omega]_{BMO}d\tau\ln\left(1+\int_s^t\|\nabla^3v\|_{L^2}d\tau\right)\right\}\right\}\nonumber\\
&\times\left[\int_{\mathbb R^3}(|\nabla v|^2+|\Delta u|^2)dx\right](s)\label{4.20}
\end{align}
for any $0<s\leq t<T^*$.

If $v\in L^{q_2}(\frac{T^*}{2}, T^*; L^{r_2})$ for some $q_2\in[2,\infty), r_2\in(3,\infty]$ with $\frac{2}{q_2}+\frac{3}{r_2}=1$, then the above inequality  shows that
\begin{align*}
&\left[\int_{\mathbb R^3}(|\nabla v|^2+|\Delta u|^2)dx\right](t)+\int_s^t\int_{\mathbb R^3}(|\Delta v|^2+a|\nabla^3u|^2)dxd\tau\\
\leq&C\left[\int_{\mathbb R^3}(|\nabla v|^2+|\Delta u|^2)dx\right](s)
\end{align*}
for any $\frac{T^*}{2}\leq s\leq t<T^*$. In particular, it holds that
$$
\sup_{\frac{T^*}{2}\leq t<T^*}\left[\int_{\mathbb R^3}(|\nabla v|^2+|\Delta u|^2)dx\right](t)+\int_{\frac{T^*}{2}}^{T^*}\int_{\mathbb R^3}(|\Delta v|^2+a|\nabla^3u|^2)dxdt<\infty.
$$
Consequently, one can apply  the local existence to extend the strong solution $(u,v)$  beyond $T^*$, which contradicts to the definition of $T^*$.

If $\omega\in L^1(\frac{T^*}{2},T^*; BMO)$, then it follows from (\ref{4.20}) that
\begin{align*}
&\left[\int_{\mathbb R^3}(|\nabla v|^2+|\Delta u|^2)dx\right](t)+\int_s^t\int_{\mathbb R^3}(|\Delta v|^2+a|\nabla^3u|^2)dxd\tau\\
\leq&C\exp\left\{\int_s^t[\omega]_{BMO}d\tau\ln\left(1+\int_s^t\|\nabla^3v\|_{L^2}d\tau\right)\right\}\left[\int_{\mathbb R^3}(|\nabla v|^2+|\Delta u|^2)dx\right](s)
\end{align*}
for any $\frac{T^*}{2}\leq s\leq t<T^*$. For any $\delta>0$, we can choose $s\in(\frac{T^*}{2}, T^*)$ such that $\int_s^t[\omega]_{BMO}d\tau<\delta$ for any $s\leq t<T^*$, and thus \begin{align}
&\left[\int_{\mathbb R^3}(|\nabla v|^2+|\Delta u|^2)dx\right](t)+\int_s^t\int_{\mathbb R^3}(|\Delta v|^2+a|\nabla^3u|^2)dxd\tau\nonumber\\
\leq&C\left[1+\left(\int_s^t\|\nabla^3v\|_{L^2}d\tau\right)^{C\delta}\right]\left[\int_{\mathbb R^3}(|\nabla v|^2+|\Delta u|^2)dx\right](s)\nonumber\\
\leq&C\left[1+\left(\int_s^t\|\nabla^3v\|_{L^2}^2d\tau\right)^{C\delta}\right]\left[\int_{\mathbb R^3}(|\nabla v|^2+|\Delta u|^2)dx\right](s)\label{4.21}
\end{align}
for any $s\leq t<T^*$. Due to (\ref{4.19}) and $|\nabla u|^2\leq|\Delta u|$, it follows from Lemma \ref{lem4.1} and Lemma \ref{lem4.2} that
\begin{align}
&\left[\int_{\mathbb R^3}(|\nabla^3u|^2+|\Delta v|^2)dx\right](t)+\int_s^t\int_{\mathbb R^3}(\frac{3a}{2}|\nabla^4u|^2+|\nabla^3v|^2)dxd\tau\nonumber\\
\leq&\exp\left\{C\min\left\{\int_s^t\|v\|_{L^{r_2}}^{q_2}d\tau,\int_s^t\|\nabla v\|_{L^\infty}d\tau\right\}\right\}\nonumber\\
&\times\exp\left\{C\min\left\{\int_s^t\|\nabla u\|_{L^{r_1}}^{q_1}d\tau,\int_s^t\|\Delta u\|_{L^\infty}d\tau\right\}\right\}\nonumber\\
&\times\left[\left(\int_{\mathbb R^3}(|\nabla^3u|^2+|\Delta v|^2)dx\right)(s)+\int_s^t\int_{\mathbb R^3}(|\nabla v|^4+|\nabla^2u|^4)dxd\tau\right]\nonumber\\
\leq&C\exp\left\{C\int_s^t\|\nabla v\|_{L^\infty}d\tau\right\}\left[\left(\int_{\mathbb R^3}(|\nabla^3u|^2+|\Delta v|^2)dx\right)(s)\right.\nonumber\\
&\left.+\int_s^t\int_{\mathbb R^3}(|\nabla v|^4+|\nabla^2u|^4)dxd\tau\right]\nonumber\\
\leq&C\exp\left\{C\int_s^t[\omega]_{BMO}d\tau\ln\left(1+\int_s^t\|\nabla^3v\|_{L^2}d\tau\right)\right\}\nonumber\\
&\times\left[\left(\int_{\mathbb R^3}(|\nabla^3u|^2+|\Delta v|^2)dx\right)(s)+\int_s^t\int_{\mathbb R^3}(|\nabla v|^4+|\nabla^2u|^4)dxd\tau\right]\nonumber\\
\leq&C\left(1+\int_s^t\|\nabla^3v\|_{L^2}d\tau\right)^{C\delta}\left[\left(\int_{\mathbb R^3}(|\nabla^3u|^2+|\Delta v|^2)dx\right)(s)\right.\nonumber\\
&\left.+\int_s^t\int_{\mathbb R^3}(|\nabla v|^4+|\nabla^2u|^4)dxd\tau\right]\nonumber\\
\leq&C\left(1+\int_s^t\|\nabla^3v\|_{L^2}^2d\tau\right)^{C\delta}\left[\left(\int_{\mathbb R^3}(|\nabla^3u|^2+|\Delta v|^2)dx\right)(s)\right.\nonumber\\
&\left.+\int_s^t\int_{\mathbb R^3}(|\nabla v|^4+|\nabla^2u|^4)dxd\tau\right]\label{4.22}
\end{align}
Set
$$
f(t)=\sup_{s\leq\tau\leq t}\int_{\mathbb R^3}(|\nabla^3u|^2+|\Delta v|^2)dx+\int_s^t\int_{\mathbb R^3}(|\nabla^4u|^2+|\nabla^3v|^2)dxd\tau.
$$
By (\ref{4.21}) and (\ref{4.22}), it holds
\begin{eqnarray}
&&\sup_{s\leq\tau\leq t}\int_{\mathbb R^3}(|\Delta u|^2+|\nabla v|^2)dx\leq C(1+f^{C\delta}(t))\left[\int_{\mathbb R^3}(|\Delta u|^2+|\nabla v|^2)dx\right](s),\label{4.23}\\
&&f(t)\leq C(1+f^{C\delta}(t))\left[f(s)+\int_s^t\int_{\mathbb R^3}(|\nabla v|^4+|\nabla^2u|^4)dxd\tau\right].\label{4.24}
\end{eqnarray}
By the Gagliado-Nirenberg-Sobolev inequality, it follows from the above two inequalities that
\begin{align*}
&\int_s^t\int_{\mathbb R^3}|\nabla v|^4dxd\tau
\leq C\int_s^t\|\nabla v\|_{L^2}\|\nabla^2v\|_{L^2}^3d\tau\\
\leq& C\int_s^t\|\nabla v\|_{L^2}\|\nabla v\|_{L^2}^{3/2}\|\nabla^3v\|_{L^2}^{3/2}d\tau\\
\leq& C\left(\int_s^t\|\nabla^3v\|_{L^2}^2d\tau\right)^{3/4}\left(\int_s^t\|\nabla v(\tau)\|_{L^2}^{10}d\tau\right)^{1/4}\\
\leq&C\left(\int_s^t\|\nabla^3v\|_{L^2}^2d\tau\right)^{3/4}(\sup_{s\leq\tau\leq t}\|\nabla v\|_{L^2}^2)^{5/4}\\
\leq&C(1+f^{3/4+C\delta}(t))\left[\int_{\mathbb R^3}(|\Delta u|^2+|\nabla v|^2)dx\right](s)
\end{align*}
and similarly
$$
\int_s^t\int_{\mathbb R^3}|\nabla^2 u|^4dxd\tau\leq C(1+f^{3/4+C\delta}(t))\left[\int_{\mathbb R^3}(|\Delta u|^2+|\nabla v|^2)dx\right](s).
$$
Combining the above two inequalities with (\ref{4.24}) yields
\begin{align*}
f(t)\leq C(1+f^{3/4+C\delta}(t))\left[f(s)+\left(\int_{\mathbb R^3}(|\Delta u|^2+|\nabla v|^2)dx\right)(s)\right]
\end{align*}
for any $s\leq t<T^*$. This, together with (\ref{4.21}), gives
$$
\sup_{s\leq t<T^*}\int_{\mathbb R^3}(|\nabla v|^2+|\Delta u|^2)dx+\int_s^{T^*}(|\Delta v|^2+|\nabla^3u|^2)dx<\infty.
$$
As a consequence, one can apply Theorem \ref{thm1} to extend $(u, v)$ to be a strong solution beyond $T^*$, which contradicts to the definition of $T^*$ again.

Now we prove the equivalency of the quantities $J_i$, $i=1,2,3,4$. Suppose that $J_1(T)$ is finite, then the statements proved in the above implies that $T$ is not the maximal existence time; as a result, $(u, v)$ can be extended to be a strong solution beyond $T$, and thus
\begin{align*}
 &u\in L^2(0, T; H_b^3(\mathbb R^3))\cap L^\infty(0, T; H_b^2(\mathbb R^3)),\\
 &v\in L^2(0, T; H^2(\mathbb R^3))\cap L^\infty(0, T; H^1(\mathbb R^3)).
\end{align*}
Due to these facts, by Lemma \ref{lem4.1}, one can easily prove that
$$
u\in L^2(\tfrac{T}{2}, T; H_b^4(\mathbb R^3)),\quad v\in L^2(\tfrac{T}{2}, T; H^3(\mathbb R^3)).
$$
Thus, one can check easily that $J_2(T), J_3(T)$ and $J_4(T)$ are all finite. Other cases can be proved in the same way.
The proof of Theorem 1 is complete. \end{proof}

Finally, it should be noted that Theorem 1 has an equivalent version:

 \begin{theorem} \label{Theorem 3}  Let $(u_0, v_0)\in H_b^{2} (\R^3; S^2)\times
H^1(\R^3,\R^3)$  be  given  initial data with $\text{div } v_0=0$.
Then, there exists a unique strong solution $(u, v):\R^3\times
[0,T^*)\to S^2\times \R^3$ of (\ref{1.5})-(\ref{1.7}) with initial values
(\ref{1.9}). Moreover, the maximal time $T^*$ can be
characterized by the condition that there are two constants
$\varepsilon_0>0$ and $R_0>0$ such that at a singular point
$ x_{i}$,
$$\limsup_{t \nearrow  T^*} \int_{B_{R}( x_{i})} |\nabla
u\left(\cdot , t\right)|^3 +|v\left(\cdot , t\right)|^3  \,dx \geq
\varepsilon_{0}$$
 for any  $R>0$ with $R\leq R_0$.
 \end{theorem}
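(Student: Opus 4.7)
The local existence and uniqueness of a strong solution on its maximal interval $[0, T^*)$ is already provided by Theorem \ref{thm1}, so the content of Theorem \ref{Theorem 3} is the $\varepsilon$-regularity characterization of $T^*$. The plan is to argue by contradiction: suppose $T^* < \infty$ is maximal and that no singular point exists, meaning that for every $x \in \R^3$ there is $R = R(x) \leq R_0$ (with $\varepsilon_0, R_0 > 0$ to be fixed) for which
\[ \limsup_{t \nearrow T^*}\int_{B_R(x)}(|\nabla u|^3 + |v|^3)\, dy < \varepsilon_0. \]
I will extract from this a Serrin type bound on $[T^*/2, T^*)$ and contradict $J_1(T^*) = \infty$ from Theorem \ref{thm1}. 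The global energy bound in Lemma \ref{lem4.0} yields $\sup_{t < T^*}\|(\nabla u, v)(t)\|_{L^2(\R^3)} \leq C$, whence H\"older forces the local $L^3$ integral to fall below $\varepsilon_0$ automatically on $B_{R_0}(x)$ when $|x|$ exceeds some $R_* = R_*(\|u_0\|, \|v_0\|, \varepsilon_0)$. Combined with the hypothesized smallness on $\bar B_{R_*}$, a standard Besicovitch type covering produces a finite family $\{B_{R_1}(x_j)\}_{j=1}^N$ of bounded overlap covering $\R^3$ and such that $\limsup_{t \to T^*}\int_{B_{R_1}(x_j)}(|\nabla u|^3 + |v|^3) < \varepsilon_0$ on each member.

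The key step is a local $\varepsilon$-regularity estimate asserting that, under this smallness, $(\nabla u, v)$ remains bounded in $L^\infty_tH^1_{\loc}$ near $T^*$. To prove it I would localize the identities behind Lemma \ref{lem4.1} with a cutoff $\varphi \in C_c^\infty(B_{R_1}(x_j))$, testing (\ref{1.5}) against $-\varphi^2\Delta v$ and the $x_\beta$-derivative of (\ref{1.7}) against $-\varphi^2\nabla_\beta\Delta u^i$. The dangerous cross terms $\int \varphi^2(|v|^2 + |\nabla u|^2)(|\nabla v|^2 + |\nabla^2 u|^2)$ are controlled via H\"older and Gagliardo-Nirenberg-Sobolev as
\[ \int \varphi^2(|v|^2 + |\nabla u|^2)(|\nabla v|^2 + |\nabla^2 u|^2)\, dx \leq C\varepsilon_0^{2/3}\|\varphi(\nabla^3 u, \Delta v)\|_{L^2}^2 + \textrm{l.o.t.}, \]
so for $\varepsilon_0$ small the coercive part of Lemma \ref{lem4.1} absorbs them. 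Summing the localized inequalities over the finite cover by a partition of unity then yields
\[ \sup_{T^*/2 \leq t < T^*}\int_{\R^3}(|\Delta u|^2 + |\nabla v|^2)\, dx + \int_{T^*/2}^{T^*}\!\!\int_{\R^3}(|\nabla^3 u|^2 + |\Delta v|^2)\, dx\, dt < \infty, \]
and the Sobolev embedding $H^2(\R^3) \hookrightarrow L^\infty(\R^3)$ upgrades this to $(\nabla u, v) \in L^\infty_t L^\infty_x$ on the finite interval $[T^*/2, T^*)$, which forces $J_1(T^*) < \infty$ via the Serrin pair $(q,r) = (2, \infty) \in \mathcal O$. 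This contradicts Theorem \ref{thm1}, as required.

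The main obstacle is the local $\varepsilon$-regularity step, which is substantially more delicate than the analogous estimate for the Ginzburg-Landau approximation or the harmonic map heat flow of \cite{CS}. Two features of the Oseen-Frank model are responsible: the anisotropic coupling of $u$ and $\nabla u$ inside $W(u, \nabla u)$ produces unfriendly commutator terms after introducing the cutoff $\varphi$, which must be handled by systematically exploiting the constraint $|u| = 1$ to reduce derivative orders (as with $\Delta u \cdot u = -|\nabla u|^2$ in Lemma \ref{lem4.1}); and the non-local pressure $p$ in (\ref{1.5}) requires a Calder\'on-Zygmund type local estimate derived from $\Delta p = -\partial_i\partial_j(v^iv^j + \nabla_iu^kW_{p_j^k}(u,\nabla u))$, where the local $L^{3/2}$ smallness of $|v|^2 + |\nabla u|^2$ (implied by the hypothesized $L^3$ smallness) provides the needed control on the pressure contribution.
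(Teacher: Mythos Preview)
Your overall strategy---assume no singular point, derive an a priori $H^1$ bound on $(\nabla u,v)$ near $T^*$, and contradict Theorem \ref{thm1}---is correct, but you are making the main step considerably harder than the paper does.

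The key simplification in the paper is that one never localizes the energy identity. Instead of testing against $-\varphi^2\Delta v$ and $-\varphi^2\nabla_\beta\Delta u^i$ and then fighting the cutoff commutators and a local pressure estimate, the paper simply invokes the \emph{global} higher-order inequality already proved inside Lemma \ref{lem4.1} (at the $H^2$ level rather than your $H^1$ level):
\[
\frac{d}{dt}\int_{\R^3}(|\nabla^3u|^2+|\Delta v|^2)\,dx+\int_{\R^3}(a|\nabla^4u|^2+|\nabla^3v|^2)\,dx
\leq C\int_{\R^3}(|\nabla u|^2+|v|^2)(|\Delta v|^2+|\nabla^3u|^2)\,dx.
\]
The covering argument is applied \emph{only to the right-hand side}: take a bounded-overlap cover of $\R^3$ by balls $B_{R_0}(x_i)$, use H\"older on each ball to pull out $\|(\nabla u,v)\|_{L^3(B_{R_0}(x_i))}^2\leq\varepsilon_0^{2/3}$, and control the remaining $L^6$ factor by the local Sobolev inequality $\|f\|_{L^6(B_{R_0})}\leq C\|\nabla f\|_{L^2(B_{R_0})}+CR_0^{-1}\|f\|_{L^2(B_{R_0})}$. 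Summing over the cover gives
\[
C\varepsilon_0^{2/3}\int_{\R^3}\Bigl(|\nabla^4u|^2+|\nabla^3v|^2+R_0^{-2}(|\nabla^3u|^2+|\Delta v|^2)\Bigr)\,dx,
\]
and for $\varepsilon_0$ small the dissipative terms on the left absorb the first two, closing a Gronwall inequality for $\int(|\nabla^3u|^2+|\Delta v|^2)$. Because the differential inequality is global to begin with, no cutoff ever touches the pressure or the anisotropic terms in $W(u,\nabla u)$; the two ``main obstacles'' you identify simply do not arise.

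One further remark: your claim that the global energy bound $\sup_t\|(\nabla u,v)\|_{L^2}\leq C$ forces the local $L^3$ integral below $\varepsilon_0$ for $|x|$ large is not correct---$L^2$ control does not dominate $L^3$ on balls. In the paper's argument this issue is moot, since the covering with uniform $L^3$ smallness is taken as the standing hypothesis (the negation of the blow-up characterization) rather than something to be manufactured from the energy.
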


To see this, we note that in the proof of Lemma 3.1, we have proved that
\begin{align*}
&\frac{d}{dt}\int_{\mathbb R^3}(|\nabla^3u|^2+|\Delta v|^2)dx+\int_{\mathbb R^3}(a|\nabla^4u|^2+|\nabla^3v|^2)dx\nonumber\\
\leq&C\int_{\mathbb R^3}(|\nabla u|^2+|v|^2)(|\Delta v|^2+|\nabla^3u|^2)\,dx.
\end{align*}
By a standard covering argument of $\R^3$, one can obtain
\begin{align*}
&\quad\int_{\mathbb R^3}(|\nabla u|^2+|v|^2)(|\Delta v|^2+|\nabla^3u|^2)\,dx\\
&\leq C\sum_i \int_{B_{R_0}(x_i)}(|\nabla u|^2+|v|^2)(|\Delta v|^2+|\nabla^3u|^2)\,dx
\\
&\leq C\sum_i \left [\int_{B_{R_0}(x_i)}(|\nabla u|^3+|v|^3)\right ]^{2/3}\left [\int_{B_{R_0}(x_i)} (|\Delta v|^6+|\nabla^3u|^6)\,dx\right ]^{1/3}\\
&\leq C\varepsilon_0^{2/3}\int_{\R^3}(|\nabla^4u|^2+|\nabla^3v|^2+\frac 1{R_0^2}(|\nabla^3 u|^2 +|\Delta v|^2)dx.
\end{align*}
We now can prove Theorem \ref{Theorem 3} by similar arguments as before, details are ommitted.

\section{Convergence of Ginzburg-Landau to Ericksen-Leslie}\label{sec4}

In this section, we prove that the strong solutions to the Ginzburg-Landau approximate system converge to the strong solution of the Ericksen-Leslie system and give a new blow up criterion of the strong solutions to the Ericksen-Leslie system in term of Serrin type norms of the strong solutions to the Ginzburg-Landau approximate system.

The following lemma is a characterization of precompact subset of $L^p(\mathbb R^N)$.

\begin{lemma}\label{lemA} (see Adams and Fournier \cite{Adams} Theorem 2.32) Let $1\leq p<\infty$. A bounded subset $K\subseteq L^p(\mathbb R^N)$ is precompact in $L^p(\mathbb R^N)$ if and only if for every number $\varepsilon>0$ there exists a number $\delta>0$ and a compact subset $G$ such that for every $u\in K$ and $h\in\mathbb R^N$ with $|h|<\delta$ both of the following inequalities hold:
$$
\int_{\mathbb R^N}|u(x+h)-u(x)|^pdx\leq\varepsilon^p,\qquad\int_{\mathbb R^N\setminus G}|u(x)|^pdx\leq\varepsilon^p.
$$

\end{lemma}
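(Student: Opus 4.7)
The plan is to prove the two directions of this Kolmogorov--Riesz--type compactness theorem separately.

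For the necessity direction (precompactness implies the two conditions), I would use that a precompact subset of a metric space is totally bounded. Given $\varepsilon>0$, choose a finite $(\varepsilon/3)$-net $\{u_1,\ldots,u_N\}\subset L^p(\mathbb R^N)$ for $K$. Each $u_i$ is approximable in $L^p$ by $C_c^\infty$ functions, so translation continuity and decay at infinity of $L^p$ functions yield $\delta_i>0$ and a closed ball $G_i=\overline{B_{R_i}(0)}$ with $\|u_i(\cdot+h)-u_i\|_{L^p}<\varepsilon/3$ for $|h|<\delta_i$ and $\|u_i\|_{L^p(\mathbb R^N\setminus G_i)}<\varepsilon/3$. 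Taking $\delta=\min_i\delta_i$ and $G=\bigcup_i G_i$ (compact), the triangle inequality applied with the nearest $u_i$ in the net to any $u\in K$ delivers both required estimates.

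For the sufficiency direction, which is the more substantial part, I would use mollification. Fix a nonnegative $\rho\in C_c^\infty(\mathbb R^N)$ supported in the unit ball with $\int\rho=1$, and set $\rho_\eta(x)=\eta^{-N}\rho(x/\eta)$. Writing $(\rho_\eta*u)(x)-u(x)=\int\rho_\eta(y)(u(x-y)-u(x))\,dy$ and applying Minkowski's integral inequality gives
\[
\|\rho_\eta*u-u\|_{L^p(\mathbb R^N)}\leq \sup_{|y|<\eta}\|u(\cdot-y)-u\|_{L^p(\mathbb R^N)},
\]
so by the first hypothesis, for any $\varepsilon>0$ one may choose $\eta$ small so that $\|\rho_\eta*u-u\|_{L^p}<\varepsilon$ uniformly in $u\in K$. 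For this fixed $\eta$, the family $K_\eta:=\{\rho_\eta*u:u\in K\}$ is uniformly bounded and uniformly Lipschitz via Young's inequality (using the $L^p$-boundedness of $K$): $\|\rho_\eta*u\|_\infty\leq \|\rho_\eta\|_{p'}\|u\|_p$ and $|\nabla(\rho_\eta*u)|\leq \|\nabla\rho_\eta\|_{p'}\|u\|_p$. By Arzel\`a--Ascoli on the compact set $G$ provided by the second hypothesis, $K_\eta\big|_G$ is precompact in $C(G)$, hence in $L^p(G)$.

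To conclude, I would assemble the pieces: given $\varepsilon>0$, fix $G$ and $\eta$ as above so that $\|u\|_{L^p(\mathbb R^N\setminus G)}<\varepsilon$ and $\|\rho_\eta*u-u\|_{L^p(\mathbb R^N)}<\varepsilon$ for every $u\in K$; extract a finite $\varepsilon$-net $\{f_1,\ldots,f_M\}$ for $K_\eta\big|_G$ in $L^p(G)$; then for each $u\in K$, choosing $f_j$ nearest to $\rho_\eta*u$ yields $\|u-f_j\mathbf 1_G\|_{L^p(\mathbb R^N)}\leq \|u\|_{L^p(\mathbb R^N\setminus G)}+\|u-\rho_\eta*u\|_{L^p}+\|\rho_\eta*u-f_j\|_{L^p(G)}\leq 3\varepsilon$, so $\{f_j\mathbf 1_G\}$ is a finite $3\varepsilon$-net for $K$. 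Total boundedness plus completeness of $L^p$ then gives precompactness. The main obstacle to watch for is the $\varepsilon$-bookkeeping and, more substantively, verifying that the uniform equicontinuity constants for $K_\eta$ depend only on $\sup_{u\in K}\|u\|_{L^p}$ (not on individual elements of $K$), which is precisely where the ``bounded subset'' hypothesis is used.
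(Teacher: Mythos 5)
Your proposed proof is correct. Note, however, that the paper does not prove this lemma at all: it is quoted verbatim as Theorem 2.32 of Adams--Fournier and used as a black box, so there is no in-paper argument to compare against. What you have written is a complete and accurate rendition of the standard Kolmogorov--Riesz--Fr\'echet compactness argument: total boundedness plus approximation by a finite net for the necessity direction, and mollification combined with Arzel\`a--Ascoli on the compact set $G$ for the sufficiency direction. The two points that usually cause trouble are both handled correctly --- the Minkowski integral inequality step $\|\rho_\eta*u-u\|_{L^p}\leq\sup_{|y|<\eta}\|u(\cdot-y)-u\|_{L^p}$, which converts the uniform translation estimate into a uniform mollification error, and the observation that the equicontinuity constants of $K_\eta$ via Young's inequality depend only on $\sup_{u\in K}\|u\|_{L^p}$, which is exactly where the boundedness hypothesis enters.
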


We need the following local type energy inequality.

\begin{lemma}\label{lem5.2}
Let $(u_\varepsilon, v_\varepsilon)$ be a strong solution to the system (\ref{1.10})--(\ref{1.12}) in $\mathbb R^3\times(0, T)$, satisfying $\frac{7}{8}\leq|u_\varepsilon|\leq\frac{9}{8}$ on $\mathbb R^3\times(0, T)$.  Then for any $\varphi\in C^\infty(\mathbb R^3)\cap W^{1,\infty}(\mathbb R^3)$, there holds
\begin{align*}
&\frac{d}{dt}\int_{\mathbb R^3}\left[|v_\varepsilon|^2+\frac{(1-|u_\varepsilon|^2)^2}{2\varepsilon^2}+2W(u_\varepsilon,\nabla u_\varepsilon)\right]\varphi^2dx\\
&+\int_{\mathbb R^3}(|\nabla v_\varepsilon|^2+|\partial_tu_\varepsilon+v_\varepsilon\nabla u_\varepsilon|^2)\varphi^2dx\\
\leq&C\int_{\mathbb R^3}[(|v_\varepsilon|^2+|\nabla u_\varepsilon|^2+|p_\varepsilon|+|\nabla v_\varepsilon|+|\nabla^2u_\varepsilon|)|v_\varepsilon||\varphi||\nabla\varphi|\\
&+(|v_\varepsilon|^2+|\nabla u_\varepsilon|^2)|\nabla\varphi|^2]dx
\end{align*}
for any $t\in(0, T)$, where $C$ is an absolute constant.
\end{lemma}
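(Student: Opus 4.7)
The plan is to mimic the global energy identity of Lemma \ref{LLEM2.2}, now weighted by $\varphi^2$. Concretely, I would test equation (\ref{1.10}) against $v_\varepsilon^i\varphi^2$ and equation (\ref{1.12}) against $(\partial_tu_\varepsilon^i+v_\varepsilon\cdot\nabla u_\varepsilon^i)\varphi^2$, integrate over $\mathbb R^3$, and add the two resulting identities. Every bulk cancellation from the proof of Lemma \ref{LLEM2.2} still occurs, because $\varphi$ enters only algebraically; the new feature is that each integration by parts now generates a remainder carrying one or two factors of $\nabla\varphi$, and these remainders must be dominated by the claimed right-hand side.

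On the $v$-equation side, integrating by parts in $-\Delta v_\varepsilon^i$, in the pressure gradient, and in the convection $(v_\varepsilon\cdot\nabla)v_\varepsilon$ (using $\text{div}\,v_\varepsilon=0$) recovers $\frac{1}{2}\frac{d}{dt}\int|v_\varepsilon|^2\varphi^2+\int|\nabla v_\varepsilon|^2\varphi^2$ plus remainders of the form $\int (v_\varepsilon\cdot\nabla v_\varepsilon)\varphi\nabla\varphi$, $\int p_\varepsilon v_\varepsilon\varphi\nabla\varphi$, and $\int|v_\varepsilon|^2 v_\varepsilon\varphi\nabla\varphi$. The right-hand side of (\ref{1.10}), after one integration by parts, produces the internal bilinear term $\int\nabla_iu_\varepsilon^k W_{p_j^k}\nabla_jv_\varepsilon^i\varphi^2$ plus a remainder $\int\nabla_iu_\varepsilon^k W_{p_j^k}v_\varepsilon^i\varphi\nabla\varphi$. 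On the $u$-equation side, the identity $W_{p_\alpha^i}\nabla_\alpha\partial_tu_\varepsilon^i+W_{u^i}\partial_tu_\varepsilon^i=\partial_tW(u_\varepsilon,\nabla u_\varepsilon)$ extracts $\frac{d}{dt}\int W\varphi^2$; combined with $\text{div}\,v_\varepsilon=0$ the convective part leaves the remainder $\int Wv_\varepsilon\varphi\nabla\varphi$; the penalization produces both $\frac{1}{4\varepsilon^2}\frac{d}{dt}\int(1-|u_\varepsilon|^2)^2\varphi^2$ and the remainder $\int\varepsilon^{-2}(1-|u_\varepsilon|^2)^2v_\varepsilon\varphi\nabla\varphi$; and integration by parts against $\nabla_\alpha W_{p_\alpha^i}$ contributes $\int W_{p_\alpha^i}(\partial_tu_\varepsilon^i+v_\varepsilon\cdot\nabla u_\varepsilon^i)\varphi\nabla\varphi$. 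Crucially, the two interaction terms $\int W_{p_\alpha^i}\nabla_\alpha v_\varepsilon^k\nabla_ku_\varepsilon^i\varphi^2$ and $\int\nabla_iu_\varepsilon^k W_{p_j^k}\nabla_jv_\varepsilon^i\varphi^2$ cancel after relabeling indices, exactly as in Lemma \ref{LLEM2.2}; this is what produces the clean left-hand side of the lemma.

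It remains to bound every $\nabla\varphi$-remainder by the claimed expression. Since $|u_\varepsilon|\leq 9/8$, the estimates $|W_p|\leq C|\nabla u_\varepsilon|$ and $|W|\leq C|\nabla u_\varepsilon|^2$ apply; combined with Young's inequality to absorb small multiples of $\int|\partial_tu_\varepsilon+v_\varepsilon\cdot\nabla u_\varepsilon|^2\varphi^2$ into the left-hand side at the cost of $C\int|\nabla u_\varepsilon|^2|\nabla\varphi|^2$, every remainder except the penalization one fits directly. The main obstacle is the dangerous remainder $\int\varepsilon^{-2}(1-|u_\varepsilon|^2)^2 v_\varepsilon\varphi\nabla\varphi$, which has no obvious uniform bound in $\varepsilon$. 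To tame it I would invoke the pointwise bound (\ref{L.2}) from Section \ref{sec2}, namely $|\varepsilon^{-2}(1-|u_\varepsilon|^2)|\leq C(|\partial_tu_\varepsilon+v_\varepsilon\cdot\nabla u_\varepsilon|+|\nabla^2u_\varepsilon|+|\nabla u_\varepsilon|^2)$, together with $|1-|u_\varepsilon|^2|\leq C$ (valid since $|u_\varepsilon|\leq 9/8$), to convert $\varepsilon^{-2}(1-|u_\varepsilon|^2)^2$ into an $\varepsilon$-independent quantity. A final Young inequality absorbs the resulting $|\partial_tu_\varepsilon+v_\varepsilon\cdot\nabla u_\varepsilon|$ piece into the left-hand side, and what remains is exactly of the required form. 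The uniform-in-$\varepsilon$ character of the lemma rests entirely on this last maneuver.
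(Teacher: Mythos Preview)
Your proposal is correct and follows essentially the same route as the paper: test (\ref{1.10}) against $v_\varepsilon^i\varphi^2$ and (\ref{1.12}) against $(\partial_tu_\varepsilon^i+v_\varepsilon\cdot\nabla u_\varepsilon^i)\varphi^2$, exploit the same bulk cancellation as in Lemma \ref{LLEM2.2}, and control the $\nabla\varphi$-remainders using $|W_p|\leq C|\nabla u_\varepsilon|$, $|W|\leq C|\nabla u_\varepsilon|^2$ together with Young's inequality. In particular, you have correctly isolated the one nontrivial point---the $\varepsilon^{-2}(1-|u_\varepsilon|^2)^2$ remainder---and your remedy via the pointwise bound (\ref{L.2}) and $|1-|u_\varepsilon|^2|\leq C$ is exactly what the paper does.
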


\begin{proof}
Multiplying (\ref{1.9}) by $v^i_\varepsilon\varphi^2$ and integrating over $\mathbb R^3$ yield
\begin{align}
&\frac{d}{dt}\int_{\mathbb R^3}\frac{|v_\varepsilon|^2}{2}\varphi^2dx+\int_{\mathbb R^3}|\nabla v_\varepsilon|^2\varphi^2dx\nonumber\\
=&\int_{\mathbb R^3}\left[\left(\frac{|v_\varepsilon|^2}{2}+p_\varepsilon\right)\textmd{div}(v_\varepsilon\varphi^2)-\frac{1}{2}\nabla|v_\varepsilon|^2\nabla\varphi^2\right]dx\nonumber\\
&+\int_{\mathbb R^3}W_{p_j^k}(u_\varepsilon,\nabla u_\varepsilon)\nabla u_\varepsilon^k\nabla_j(v_\varepsilon\varphi^2)dx.\label{5.12}
\end{align}

Multiplying (\ref{1.11}) by $(\partial_tu_\varepsilon^i+v_\varepsilon\nabla u_\varepsilon^i)\varphi^2$ and integrating over $\mathbb R^3$, one can get
\begin{align}
&\int_{\mathbb R^3}(\partial_tu_\varepsilon^i+v_\varepsilon\cdot\nabla u_\varepsilon^i)^2\varphi^2dx\nonumber\\
=&\int_{\mathbb R^3}\left[\nabla_\alpha(W_{p_\alpha^i}(u_\varepsilon,\nabla u_\varepsilon)-W_{u^i}(u_\varepsilon,\nabla u_\varepsilon)+\frac{1-|u_\varepsilon|^2}{\varepsilon^2}u^i_\varepsilon\right](\partial_tu_\varepsilon^i+v_\varepsilon\cdot\nabla u_\varepsilon^i)\varphi^2dx. \label{5.13}
\end{align}
It follows from integrating by parts that
\begin{align*}
&\int_{\mathbb R^3}[\nabla_\alpha(W_{p_\alpha^i}(u_\varepsilon,\nabla u_\varepsilon))-W_{u^i}(u_\varepsilon,\nabla u_\varepsilon)]\partial_tu_\varepsilon^i\varphi^2dx\\
=&-\int_{\mathbb R^3}[W_{p_\alpha^i}(u_\varepsilon,\nabla u_\varepsilon)\partial_t\nabla_\alpha u_\varepsilon^i+W_{u^i}(u_\varepsilon,\nabla u_\varepsilon)\partial_tu_\varepsilon^i]\varphi^2dx\\
&-\int_{\mathbb R^3}W_{p_\alpha^i}(u_\varepsilon,\nabla u_\varepsilon)\partial_tu_\varepsilon^i\nabla_\alpha\varphi^2dx\\
=&-\int_{\mathbb R^3}[\partial_tW(u_\varepsilon,\nabla u_\varepsilon)\varphi^2+W_{p_\alpha^i}(u_\varepsilon,\nabla u_\varepsilon)\partial_tu_\varepsilon^i\nabla_\alpha\varphi^2]dx\\
=&-\frac{d}{dt}\int_{\mathbb R^3}W(u_\varepsilon,\nabla u_\varepsilon)\varphi^2dx-\int_{\mathbb R^3}W_{p_\alpha^i}(u_\varepsilon,\nabla u_\varepsilon)\partial_tu_\varepsilon^i\nabla_\alpha\varphi^2dx
\end{align*}
and
\begin{align*}
&\int_{\mathbb R^3}[\nabla_\alpha(W_{p_\alpha^i}(u_\varepsilon,\nabla u_\varepsilon))-W_{u^i}(u_\varepsilon,\nabla u_\varepsilon)]v\nabla u_\varepsilon^i\varphi^2dx\\
=&-\int_{\mathbb R^3}[W_{p_\alpha^i}(u_\varepsilon,\nabla u_\varepsilon)v\nabla\nabla_\alpha u_\varepsilon^i+W_{u^i}(u_\varepsilon, \nabla u_\varepsilon)v\nabla u_\varepsilon^i]\varphi^2dx\\
&-\int_{\mathbb R^3}W_{p_\alpha^i}(u_\varepsilon, \nabla u_\varepsilon)\nabla u_\varepsilon^i\nabla_\alpha(v_\varepsilon\varphi^2)dx\\
=&-\int_{\mathbb R^3}[v\nabla W(u_\varepsilon, \nabla u_\varepsilon)\varphi^2+W_{p_\alpha^i}(u_\varepsilon, \nabla u_\varepsilon)\nabla u_\varepsilon^i\nabla_\alpha(v_\varepsilon\varphi^2)]dx\\
=&\int_{\mathbb R^3}[W(u_\varepsilon, \nabla u_\varepsilon)\textmd{div}(v_\varepsilon\varphi^2)-W_{p_{\alpha}^i}(u_\varepsilon, \nabla u_\varepsilon)\nabla u_\varepsilon^i\nabla_\alpha(v_\varepsilon\varphi^2)]dx.
\end{align*}
Moreover, direct calculations give
\begin{align*}
&\int_{\mathbb R^3}\frac{1-|u_\varepsilon|^2}{\varepsilon^2}u^i_\varepsilon(\partial_tu_\varepsilon^i+v_\varepsilon\cdot\nabla u_\varepsilon^i)\varphi^2dx=-\int_{\mathbb R^3}(\partial_t+v_\varepsilon\cdot\nabla)\left[\frac{(1-|u_\varepsilon|^2)^2}{4\varepsilon^2}\right]\varphi^2dx\\
=&-\frac{d}{dt}\int_{\mathbb R^3}\frac{(1-|u_\varepsilon|^2)^2}{4\varepsilon^2}\varphi^2dx+\int_{\mathbb R^3}\frac{(1-|u_\varepsilon|^2)^2}{4\varepsilon^2}\textmd{div}(v_\varepsilon\varphi^2)dx.
\end{align*}
Substituting the above three equalities into (\ref{5.13}) gives
\begin{align}
&\frac{d}{dt}\int_{\mathbb R^3}\left[\frac{(1-|u_\varepsilon|^2)^2}{4\varepsilon^2}+W(u_\varepsilon, \nabla u_\varepsilon)\right]\varphi^2dx+\int_{\mathbb R^3}|\partial_tu_\varepsilon+v_\varepsilon\cdot\nabla u_\varepsilon|^2\varphi^2dx\nonumber\\
=&\int_{\mathbb R^3}\left[\left(\frac{(1-|u_\varepsilon|^2)^2}{4\varepsilon^2}+W(u_\varepsilon, \nabla u_\varepsilon)\right)\textmd{div}(v_\varepsilon\varphi^2)-W_{p_\alpha^i}(u_\varepsilon, \nabla u_\varepsilon)\partial_tu_\varepsilon^i\nabla_\alpha\varphi^2\right]dx\nonumber\\
&-\int_{\mathbb R^3}W_{p_\alpha^i}(u_\varepsilon, \nabla u_\varepsilon)\nabla u_\varepsilon^i\nabla_\alpha(v_\varepsilon\varphi^2)dx. \label{5.14}
\end{align}

Combining (\ref{5.12}) with (\ref{5.14}), we obtain
\begin{align*}
&\frac{d}{dt}\int_{\mathbb R^3}\left[\frac{|v_\varepsilon|^2}{2}+\frac{(1-|u_\varepsilon|^2)^2}{4\varepsilon^2}+W(u_\varepsilon, \nabla u_\varepsilon)\right]\varphi^2dx\\
&+\int_{\mathbb R^3}(|\nabla v_\varepsilon|^2+|\partial_tu_\varepsilon+v_\varepsilon\cdot\nabla u_\varepsilon|^2)\varphi^2dx\nonumber\\
=&\int_{\mathbb R^3}\left[\left(\frac{|v_\varepsilon|^2}{2}+p_\varepsilon+\frac{(1-|u_\varepsilon|^2)^2}{4\varepsilon^2}+W(u_\varepsilon, \nabla u_\varepsilon)\right)\textmd{div}(v_\varepsilon\varphi^2)\right.\\
&\left.-\frac{1}{2}\nabla|v_\varepsilon|^2\nabla\varphi^2-W_{p_\alpha^i}(u_\varepsilon, \nabla u_\varepsilon)\partial_tu_\varepsilon^i\nabla_\alpha\varphi^2\right]dx\nonumber\\
\leq&\eta\int_{\mathbb R^3}|\partial_tu_\varepsilon+v_\varepsilon\cdot\nabla u_\varepsilon|^2\varphi^2dx+C\int_{\mathbb R^3}[(|v_\varepsilon|^2+|\nabla u_\varepsilon|^2\\
&+|p_\varepsilon|+|\nabla v_\varepsilon|)|v_\varepsilon||\varphi||\nabla\varphi|+|\nabla u_\varepsilon|^2|\nabla\varphi|^2]dx\\
&+C\int_{\mathbb R^3}\frac{(1-|u_\varepsilon|^2)^2}{\varepsilon^2}|v_\varepsilon||\varphi||\nabla\varphi|dx.
\end{align*}
This, together with the facts that $\frac{7}{8}\leq|u_\varepsilon|\leq\frac{9}{8}$ and
$$
\left|\frac{1-|u_\varepsilon|^2}{\varepsilon^2}\right|\leq C(|\partial_tu_\varepsilon+v_\varepsilon\cdot\nabla u_\varepsilon|+|\nabla^2u_\varepsilon|+|\nabla u_\varepsilon|^2),
$$
gives
\begin{align*}
&\frac{d}{dt}\int_{\mathbb R^3}\left[\frac{|v_\varepsilon|^2}{2}+\frac{(1-|u_\varepsilon|^2)^2}{4\varepsilon^2}+W(u_\varepsilon, \nabla u_\varepsilon)\right]\varphi^2dx\\
&+\int_{\mathbb R^3}\left(|\nabla v_\varepsilon|^2+\frac{3}{4}|\partial_tu_\varepsilon+v_\varepsilon\cdot\nabla u_\varepsilon|^2\right)\varphi^2dx\nonumber\\
\leq&C\int_{\mathbb R^3}[(|v_\varepsilon|^2+|\nabla u_\varepsilon|^2+|p_\varepsilon|+|\nabla v_\varepsilon|)|v_\varepsilon||\varphi||\nabla\varphi|+|\nabla u_\varepsilon|^2|\nabla\varphi|^2]dx\\
&+C\int_{\mathbb R^3}(|\partial_tu_\varepsilon+v_\varepsilon\nabla u_\varepsilon|+|\nabla^2u_\varepsilon|+|\nabla u_\varepsilon|^2)|v_\varepsilon||\varphi||\nabla\varphi|dx\\
\leq&\eta\int_{\mathbb R^3}|\partial_tu_\varepsilon+v_\varepsilon\nabla u_\varepsilon|^2\varphi^2dx+C\int_{\mathbb R^3}[(|v_\varepsilon|^2+|\nabla u_\varepsilon|^2\\
&+|p_\varepsilon|+|\nabla v_\varepsilon|+|\nabla^2u_\varepsilon|)|v_\varepsilon||\varphi||\nabla\varphi|+(|v_\varepsilon|^2+|\nabla u_\varepsilon|^2)|\nabla\varphi|^2]dx,
\end{align*}
which implies the conclusion. This completes a proof.
\end{proof}

The following lemma will be used in the proof of strong convergence and uniform estimates.

\begin{lemma}
\label{lem4.3}
Let $(u_\varepsilon, v_\varepsilon)$ and $(u, v)$ be strong solutions to the systems (\ref{1.10})--(\ref{1.12}) and (\ref{1.5})--(\ref{1.7}) in $\mathbb R^3\times(0, T)$ with the same initial data $(u_0, v_0)$, respectively. Suppose that
$$
(\nabla u_\varepsilon, v_\varepsilon)\rightarrow(\nabla u, v)\quad\text{in }L^2(0,T; H^1(\mathbb R^3))
$$
and
$$
\varlimsup_{\varepsilon\rightarrow0}\sup_{0\leq t\leq T}\|(\nabla u_\varepsilon, v_\varepsilon)\|_{H^1(\mathbb R^3)}<\infty.
$$
Let $K>0$ be a constant such that
$$
\sup_{0\leq t\leq T}\|(\nabla u, v)\|_{H^1(\mathbb R^3)}^2+\int_0^T(\|\nabla^2u\|_2^4+\|\nabla v\|_2^4)dt\leq K.
$$
Then there are two positive constants $\varepsilon_0$ and $S_0$, with $S_0$ depending only on the initial data $(u_0, v_0)$ and $K$, such that
\begin{align*}
&\sup_{0<\varepsilon\leq\varepsilon_0}\sup_{0\leq t\leq T}\left(\|(\nabla u_\varepsilon, v_\varepsilon)\|_{H^1(\mathbb R^3)}^2+\left\|\partial_tu_\varepsilon\right\|_{L^2(\mathbb R^3)}^2\right)\leq S_0.
\end{align*}
\end{lemma}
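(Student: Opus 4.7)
The plan is to apply the high-order estimate of Lemma \ref{lem5.1} to $(u_\varepsilon,v_\varepsilon)$ and then close a Gronwall-type argument whose integrating factor is bounded by $K$ plus a quantity that tends to $0$ as $\varepsilon\to 0$ thanks to the assumed strong convergence; the resulting bound is combined with the basic energy identity of Lemma \ref{LLEM2.2} to produce the claimed $H^1$ control, while the bound on $\|\partial_tu_\varepsilon\|_{L^2}^2$ comes for free from the same $F$-quantity. Since Lemma \ref{lem5.1} requires $\tfrac34\le|u_\varepsilon|\le\tfrac54$, this is maintained by a continuation argument in the spirit of Proposition \ref{prop}: on the maximal subinterval of $[0,T]$ on which $|u_\varepsilon|\in[\tfrac34,\tfrac54]$, the estimate derived below combined with the Gagliardo--Nirenberg bound $\|1-|u_\varepsilon|^2\|_{L^\infty}\le C\varepsilon^{1/4}(1+\|\nabla u_\varepsilon\|_{H^1}^{3/2})$ forces $|u_\varepsilon|\in[\tfrac78,\tfrac98]$ once $\varepsilon\le\varepsilon_0$, ruling out loss of the pointwise inequality.

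Writing $F(t):=\|(\Delta u_\varepsilon,\partial_tu_\varepsilon,\nabla v_\varepsilon)\|_{L^2}^2$ and $G(t)$ for the corresponding third-order good term on the left-hand side of Lemma \ref{lem5.1}, I would substitute
\[
|\nabla u_\varepsilon|^2+|v_\varepsilon|^2\le 2(|\nabla u|^2+|v|^2)+2\bigl(|\nabla(u_\varepsilon-u)|^2+|v_\varepsilon-v|^2\bigr)
\]
into the right-hand side of that lemma and estimate each piece via the Hölder pair $L^3\times L^{3/2}$. Using $H^1\hookrightarrow L^6$ and the interpolation $\|h\|_{L^3}\le C\|h\|_{L^2}^{1/2}\|\nabla h\|_{L^2}^{1/2}$, one gets $\|f_\varepsilon\|_{L^{3/2}}\le CF^{1/2}G^{1/2}$ for $f_\varepsilon:=|\nabla^2 u_\varepsilon|^2+|\partial_tu_\varepsilon|^2+|\nabla v_\varepsilon|^2$; the $(u,v)$-piece is then bounded by $\eta G+C(\|\nabla^2u\|_{L^2}^4+\|\nabla v\|_{L^2}^4)F$, and the difference piece by $\eta G+C\delta_\varepsilon(t)^2 F$ where $\delta_\varepsilon(t):=\|\nabla^2(u_\varepsilon-u)\|_{L^2}^2+\|\nabla(v_\varepsilon-v)\|_{L^2}^2$. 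Absorbing the $\eta G$ terms leaves
\[
F'(t)+cG(t)\le C\bigl(\|\nabla^2u(t)\|_{L^2}^4+\|\nabla v(t)\|_{L^2}^4+\delta_\varepsilon(t)^2\bigr)F(t).
\]

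The integrating factor thus splits into $CK$, which is bounded by the very definition of $K$, and $C\int_0^T\delta_\varepsilon^2\,dt$, which is bounded by $C(\sup_{0\le t\le T}\delta_\varepsilon(t))\int_0^T\delta_\varepsilon(t)\,dt$; the supremum is uniform in $\varepsilon$ by the assumed $\limsup$ and the $K$-bound on $(\nabla u,v)$, and the time integral vanishes as $\varepsilon\to 0$ by the strong convergence in $L^2(0,T;H^1)$, so for $\varepsilon\le\varepsilon_0$ one obtains $F(t)\le F(0)\exp(CK+C)$. The initial value $F(0)=\|\Delta u_0\|_{L^2}^2+\|\nabla v_0\|_{L^2}^2+\|\mathcal Q_\varepsilon(u_0,v_0)\|_{L^2}^2$ is bounded independently of $\varepsilon$ because $|u_0|=1$ makes the singular term $\varepsilon^{-2}(1-|u_0|^2)u_0$ in $\mathcal Q_\varepsilon(u_0,v_0)$ vanish, leaving a constant depending only on $(u_0,v_0)$; combining with Lemma \ref{LLEM2.2} to control $\|\nabla u_\varepsilon\|_{L^2}^2+\|v_\varepsilon\|_{L^2}^2$ yields the desired $S_0$ depending only on $(u_0,v_0)$ and $K$. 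The main obstacle is the circular interplay between the pointwise bound $|u_\varepsilon|\in[\tfrac34,\tfrac54]$ needed to apply Lemma \ref{lem5.1} and the $F$-estimate needed to enforce that pointwise bound; this is resolved by the continuation scheme above, which parallels Proposition \ref{prop} but runs on the macroscopic interval $[0,T]$, making it essential that both the $K$-bound on the limit and the strong convergence keep the Gronwall exponent uniform in $\varepsilon$.
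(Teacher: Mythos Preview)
Your proposal is correct and follows essentially the same strategy as the paper's proof: apply Lemma~\ref{lem5.1}, use Gagliardo--Nirenberg interpolation to turn the right-hand side into a coefficient of the form $\|\nabla^2 u_\varepsilon\|_2^4+\|\nabla v_\varepsilon\|_2^4$ (or an equivalent quantity) times $F$, then split the $\varepsilon$-dependent coefficient into the limiting piece controlled by $K$ and a difference piece that vanishes as $\varepsilon\to 0$ by the assumed strong convergence together with the uniform $H^1$ bound; finish with Gronwall, the observation that $|u_0|=1$ kills the singular term in $F(0)$, and Lemma~\ref{LLEM2.2} for the zeroth-order energy. The only cosmetic difference is where the splitting occurs: the paper first derives the clean differential inequality $F'\le C(\|\nabla^2 u_\varepsilon\|_2^4+\|\nabla v_\varepsilon\|_2^4)F$ and then splits the time-integrated quartic norms as $\|\nabla^2 u_\varepsilon\|_2^4\le 8\|\nabla^2 u\|_2^4+8\|\nabla^2(u_\varepsilon-u)\|_2^4$, whereas you split the pointwise weight $|\nabla u_\varepsilon|^2+|v_\varepsilon|^2$ inside the integral before applying H\"older; the outcome is the same. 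Your treatment of the pointwise constraint $\tfrac34\le|u_\varepsilon|\le\tfrac54$ via a continuation argument is in fact more careful than the paper, which applies Lemma~\ref{lem5.1} without comment---in the paper this is harmless because in the only place Lemma~\ref{lem4.3} is invoked (Step~2 of the proof of Theorem~\ref{thm2}) the bound $\tfrac78\le|u_\varepsilon|\le\tfrac98$ has already been established.
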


\begin{proof} Set $M_1=\|\nabla u_0\|_{H^1(\mathbb R^3)}+\|v_0\|_{H^1(\mathbb R^3)}.$ Using  Lemma \ref{lem5.1} and the Gagliado-Nirenberg-Sobolev inequality, we have
\begin{align*}
&\frac{d}{dt}(\|\Delta u_\varepsilon\|_{2}^2+\|\partial_tu_\varepsilon\|_2^2+\|\nabla v_\varepsilon\|_2^2)\\
&+a(\|\nabla^3u_\varepsilon\|_2^2+\|\nabla\partial_tu_\varepsilon\|_2^2)+(\|\Delta v_\varepsilon\|_2^2+\|\partial_tv_\varepsilon\|_2^2)\\
\leq&C\int_{\mathbb R^3}(|\nabla u_\varepsilon|^2+|v_\varepsilon|^2)(|\nabla^2u_\varepsilon|^2+|\partial_tu_\varepsilon|^2+|\nabla v_\varepsilon|^2)dx\\
\leq&C(\|\nabla^2u_\varepsilon\|_2^2+\|\nabla v_\varepsilon\|_2^2)(\|\nabla^3u_\varepsilon\|_2+\|\nabla\partial_tu_\varepsilon\|_2+\|\nabla^2v_\varepsilon\|_2)\\
&\times(\|\Delta u_\varepsilon\|_2+\|\partial_tu_\varepsilon\|_2+\|\nabla v_\varepsilon\|_2)\\
\leq&\eta(\|\nabla^3u_\varepsilon\|_2^2+\|\nabla\partial_tu_\varepsilon\|_2^2+\|\nabla^2v_\varepsilon\|_2^2)+C(\|\nabla^2u_\varepsilon\|_2^4+\|\nabla v_\varepsilon\|_2^4)\\
&\times(\|\Delta u_\varepsilon\|_2^2+\|\partial_tu_\varepsilon\|_2^2+\|\nabla v_\varepsilon\|_2^2).
\end{align*}
Thus
\begin{align}
&\frac{d}{dt}(\|\Delta u_\varepsilon\|_{2}^2+\|\partial_tu_\varepsilon\|_2^2+\|\nabla v_\varepsilon\|_2^2)\nonumber\\
\leq&C(\|\nabla^2u_\varepsilon\|_2^4+\|\nabla v_\varepsilon\|_2^4)(\|\Delta u_\varepsilon\|_2^2+\|\partial_tu_\varepsilon\|_2^2+\|\nabla v_\varepsilon\|_2^2).\label{6.6}
\end{align}

Using equation (\ref{1.12}) and $|u_0|=1$, one has
$$
\|\partial_tu_\varepsilon(0)\|_2^2\leq C(\|\nabla^2u_0\|_2^2+\|\nabla u_0\|_4^4+\|v_0\|_4^4)\leq C(M_1^4+1).
$$
Due to the assumptions in the lemma, there is a constant $\varepsilon_0$, such that for any $\varepsilon\in(0, \varepsilon_0]$, it holds that
\begin{align*}
&\int_0^{T}(\|\nabla^2u_\varepsilon\|_2^4+\|\nabla v_\varepsilon\|_2^4)dt\\
\leq&8\int_0^{T}(\|\nabla^2u\|_2^4+\|\nabla v\|_2^4)dt+8\int_0^{T}(\|\nabla^2(u_\varepsilon-u)\|_2^4+\|\nabla (v_\varepsilon-v)\|_2^4)dt\\
\leq&8K+C\sup_{0\leq t\leq T}(\|(\nabla^2u_\varepsilon,\nabla v_\varepsilon)\|_2^2+\|(\nabla^2u, \nabla v)\|_2^2)\\
&\times\int_0^{T}(\|\nabla^2(u_\varepsilon-u)\|_2^2+\|\nabla (v_\varepsilon-v)\|_2^2)dt\\
\leq&8K+C(K+2\varlimsup_{\varepsilon\rightarrow0}\|(\nabla u_\varepsilon, v_\varepsilon)\|_{H^1(\mathbb R^3)}^2)\int_0^{T}(\|\nabla^2(u_\varepsilon-u)\|_2^2+\|\nabla (v_\varepsilon-v)\|_2^2)dt\\
\leq&8K+1.
\end{align*}
It follows from these two inequalities and (\ref{6.6}) that
\begin{align}
&\sup_{0\leq t\leq T}(\|\Delta u_\varepsilon\|_{2}^2+\|\partial_tu_\varepsilon\|_2^2+\|\nabla v_\varepsilon\|_2^2)\nonumber\\
\leq&e^{C\int_0^{T}(\|\nabla^2u_\varepsilon\|_2^4+\|\nabla v_\varepsilon\|_2^4)dt}(\|\Delta u_0\|_{2}^2+\|\partial_tu_\varepsilon(0)\|_2^2+\|\nabla v_0\|_2^2)\nonumber\\
\leq&C(M_1^4+1)e^{CK+C}:=M_2^2\label{6.7}
\end{align}
for any $\varepsilon\in(0, \varepsilon_0].$
By Lemma \ref{LLEM2.2},
$$
\sup_{0\leq t\leq T}\|v_\varepsilon\|_2^2+\|\nabla u_\varepsilon\|_2^2\leq CM_1^2.
$$
Combining this with (\ref{6.7}), we have
\begin{align*}
\sup_{0<\varepsilon\leq\varepsilon_0}\sup_{0\leq t\leq T}\left(\|(\nabla u_\varepsilon, v_\varepsilon)\|_{H^1(\mathbb R^3)}^2+\left\|\partial_tu_\varepsilon\right\|_{L^2(\mathbb R^3)}^2\right)
\leq C(M_1^2+M_2^2):=S_0.
\end{align*}
This completes the proof.
\end{proof}

The following lemma will be used to prove the new blow up criterion.

\begin{lemma}\label{lem4.4}
Let $(u_\varepsilon, v_\varepsilon)$ be a strong solution in $\mathbb R^3\times(0, T_\varepsilon)$ to the system (\ref{1.10})--(\ref{1.12}) with (\ref{1.9}). Suppose that
$$
\|(\nabla u_\varepsilon, v_\varepsilon)\|_{L^q(0, T_\varepsilon; L^r(\mathbb R^3))}\leq L
$$
for some positive constant $L$ and $\frac{2}{q}+\frac{3}{r}=1$ with $q\in[2,\infty)$ and $r\in(3,\infty]$. Then there is a constant $N$ depending only on $L$ and the initial data such that
$$
\sup_{0\leq t\leq T_\varepsilon}\|(\nabla u_\varepsilon, v_\varepsilon)\|_{H^1}^2+\int_0^{T_\varepsilon}(\|(\nabla^3u_\varepsilon, \nabla^2v_\varepsilon)\|_{L^2}^2+\|(\nabla\partial_tu_\varepsilon, \partial_tv_\varepsilon)\|_{L^2}^2)dt\leq N
$$
for small $\varepsilon$.
\end{lemma}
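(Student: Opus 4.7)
The plan is to use the high order estimate of Lemma \ref{lem5.1} together with the Serrin type hypothesis and a continuity argument to maintain the constraint $\frac34\le|u_\varepsilon|\le\frac54$ throughout $[0,T_\varepsilon]$. First, the basic energy identity of Lemma \ref{LLEM2.2} applied to the initial data $(u_0,v_0)$ (which satisfies $|u_0|=1$, so the Ginzburg--Landau potential vanishes at $t=0$) yields
\[
\sup_{0\le t\le T_\varepsilon}\bigl(\|v_\varepsilon\|_{L^2}^2+\|\nabla u_\varepsilon\|_{L^2}^2\bigr)+\varepsilon^{-2}\|1-|u_\varepsilon|^2\|_{L^\infty(0,T_\varepsilon;L^2)}^2\le C\|(v_0,\nabla u_0)\|_{L^2}^2.
\]

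Next, introduce the bootstrap interval
\[
T^\ast=\sup\Bigl\{t\in[0,T_\varepsilon]:\ \tfrac34\le|u_\varepsilon(\cdot,s)|\le\tfrac54\ \text{on }\mathbb R^3\times[0,t]\Bigr\},
\]
which is positive by continuity since $|u_0|=1$. On $[0,T^\ast)$ Lemma \ref{lem5.1} applies, so with $Y(t):=\|\Delta u_\varepsilon\|_{L^2}^2+\|\partial_tu_\varepsilon\|_{L^2}^2+\|\nabla v_\varepsilon\|_{L^2}^2$ and
$D(t):=\|\nabla^3u_\varepsilon\|_{L^2}^2+\|\nabla\partial_tu_\varepsilon\|_{L^2}^2+\|\Delta v_\varepsilon\|_{L^2}^2+\|\partial_tv_\varepsilon\|_{L^2}^2$ we have
\[
Y'(t)+aD(t)\le C\int_{\mathbb R^3}(|\nabla u_\varepsilon|^2+|v_\varepsilon|^2)(|\nabla^2u_\varepsilon|^2+|\partial_tu_\varepsilon|^2+|\nabla v_\varepsilon|^2)\,dx.
\]
The core analytic step is to absorb the right hand side via Hölder and Gagliardo--Nirenberg: for any $f\in H^1(\mathbb R^3)$ and $(q,r)\in\mathcal O$,
\[
\int_{\mathbb R^3}|g|^2|f|^2dx\le\|g\|_{L^r}^2\|f\|_{L^{2r/(r-2)}}^2\le C\|g\|_{L^r}^2\|f\|_{L^2}^{2-6/r}\|\nabla f\|_{L^2}^{6/r}\le\eta\|\nabla f\|_{L^2}^2+C\|g\|_{L^r}^q\|f\|_{L^2}^2,
\]
where I use $\tfrac2q+\tfrac3r=1$ to match the Young exponent. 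Applying this with $g\in\{\nabla u_\varepsilon,v_\varepsilon\}$ and $f\in\{\nabla^2u_\varepsilon,\partial_tu_\varepsilon,\nabla v_\varepsilon\}$ and choosing $\eta$ small to absorb into $aD(t)$ gives
\[
Y'(t)+\tfrac a2 D(t)\le C\bigl(\|\nabla u_\varepsilon\|_{L^r}^q+\|v_\varepsilon\|_{L^r}^q\bigr)Y(t).
\]

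Grönwall's inequality together with the hypothesis $\|(\nabla u_\varepsilon,v_\varepsilon)\|_{L^q(0,T_\varepsilon;L^r)}\le L$ then yields
\[
\sup_{0\le t<T^\ast}Y(t)+\int_0^{T^\ast}D(t)\,dt\le Y(0)e^{CL^q}.
\]
Since $|u_0|=1$, equation (\ref{1.12}) at $t=0$ gives $\|\partial_tu_\varepsilon(0)\|_{L^2}^2\le C(\|\Delta u_0\|_{L^2}^2+\|\nabla u_0\|_{L^4}^4+\|v_0\|_{L^4}^2\|\nabla u_0\|_{L^4}^2)$, so $Y(0)$ is bounded solely in terms of the initial data, and hence the above bound is a constant $N_0=N_0(L,u_0,v_0)$.

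The last step is to close the bootstrap. From $Y(t)\le N_0$ and the basic energy bound, $\|\nabla u_\varepsilon\|_{H^1}\le N_1$ on $[0,T^\ast)$; by the same Gagliardo--Nirenberg argument used at the end of the proof of Proposition \ref{prop},
\[
\|1-|u_\varepsilon|^2\|_{L^\infty}\le C\|1-|u_\varepsilon|^2\|_{L^2}^{1/4}\|\nabla^2(1-|u_\varepsilon|^2)\|_{L^2}^{3/4}\le C(1+N_1)^{7/4}\varepsilon^{1/4}.
\]
For $\varepsilon\le\varepsilon_0$ with $\varepsilon_0$ small depending on $N_1$, the right hand side is strictly less than $\tfrac7{16}$, which forces $|u_\varepsilon(\cdot,T^\ast)|\in(\tfrac34,\tfrac54)$ strictly; by continuity one can extend past $T^\ast$, contradicting maximality unless $T^\ast=T_\varepsilon$. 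Hence $Y(t)\le N_0$ on all of $[0,T_\varepsilon]$, and combined with the $L^2$ energy bound this gives the claimed estimate with $N=N_0+C\|(v_0,\nabla u_0)\|_{L^2}^2$. The main obstacle is the simultaneous handling of the Ginzburg--Landau constraint and the Serrin type bound; resolving this through the bootstrap on $|u_\varepsilon|$ and the choice of small $\varepsilon$ is what permits the uniform estimate independent of $\varepsilon$.
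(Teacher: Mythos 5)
Your proposal is correct and follows essentially the same route as the paper: the basic energy balance of Lemma \ref{LLEM2.2}, the high order differential inequality of Lemma \ref{lem5.1} closed via the Serrin--H\"older--Gagliardo--Nirenberg interpolation and Gr\"onwall, and a continuity (bootstrap) argument on $\frac34\le|u_\varepsilon|\le\frac54$ using the $O(\varepsilon^{1/4})$ bound on $\|1-|u_\varepsilon|^2\|_{L^\infty}$ to conclude $T^\ast=T_\varepsilon$ for small $\varepsilon$. The only cosmetic difference is that you verify strict containment at the endpoint $T^\ast$ directly, while the paper deduces $\frac78\le|u_\varepsilon|\le\frac98$ and invokes the maximality of its $T_\varepsilon^1$; the substance is identical.
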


\begin{proof}
Let $T_\varepsilon^1\in(0, T_\varepsilon]$ be the maximal time such that $\frac{3}{4}\leq|u_\varepsilon|\leq\frac{5}{4}$ on $\mathbb R^3\times[0, T_\varepsilon^1)$. By Lemma \ref{lem5.1},
\begin{align*}
&\frac{d}{dt}\int_{\mathbb R^3}(|\Delta u_\varepsilon|^2+|\partial_t u_\varepsilon|^2+|\nabla v_\varepsilon|^2)dx\\
&+\int_{\mathbb R^3}\left(a|\nabla^3u_\varepsilon|^2+a|\nabla\partial_tu_\varepsilon|^2+|\Delta v_\varepsilon|^2+|\partial_tv_\varepsilon|^2\right)dx\nonumber\\
\leq&C\int_{\mathbb R^3}(|\nabla u_\varepsilon|^2+|v_\varepsilon|^2)(|\nabla^2u_\varepsilon|^2+|\partial_tu_\varepsilon|^2+|\nabla v_\varepsilon|^2)dx
\end{align*}
for any $t\in(0, T_\varepsilon^1)$. By the Sobolev embedding inequality and the H\"older inequality, it holds that
\begin{align*}
\int_{\mathbb R^3}|w|^2|f|^2dx\leq \|w\|_{L^r}^2\|f\|_{L^2}^{\frac{2r-6}{r}}\|f\|_{L^6}^{\frac{6}{r}}\leq\eta\|\nabla f\|_{L^2}^2+C\|w\|_{L^r}^q\|f\|_{L^2}^2.
\end{align*}
Combining the above two inequalities shows that
\begin{align*}
&\frac{d}{dt}\int_{\mathbb R^3}(|\Delta u_\varepsilon|^2+|\partial_t u_\varepsilon|^2+|\nabla v_\varepsilon|^2)dx\\
&+\frac{1}{2}\int_{\mathbb R^3}\left(a|\nabla^3u_\varepsilon|^2+a|\nabla\partial_tu_\varepsilon|^2+|\Delta v_\varepsilon|^2+|\partial_tv_\varepsilon|^2\right)dx\nonumber\\
\leq&C\|(\nabla u_\varepsilon, v_\varepsilon)\|_{L^q(0, T_\varepsilon; L^r(\mathbb R^3))}(\|\nabla^2 u_\varepsilon\|_{L^2(\mathbb R^3)}^2+\|\partial_t u_\varepsilon\|_{L^2(\mathbb R^3)}^2+\|\nabla v_\varepsilon\|_{L^2(\mathbb R^3)}^2).
\end{align*}
Equation (\ref{1.12}) implies that
$$
\|\partial_tu_\varepsilon(0)\|_{L^2(\mathbb R^3)}^2\leq C(\|\nabla^2u_0\|_{L^2(\mathbb R^3)}^2+\|v_0\|_{L^4(\mathbb R^3)}^4+\|\nabla u_0\|_{L^4(\mathbb R^3)}^4)\leq C.
$$
It follows from above two inequalities that
\begin{align}
&\sup_{0\leq t\leq T_\varepsilon^1}(\|\nabla^2u_\varepsilon\|_{L^2(\mathbb R^3)}^2+\|\partial_tu_\varepsilon\|_{L^2(\mathbb R^3)}^2+\|\nabla v_\varepsilon\|_{L^2(\mathbb R^3)}^2)\nonumber\\
&+\int_0^{T_\varepsilon^1}(\|\nabla^3u_\varepsilon\|_{L^2(\mathbb R^3)}^2+\|\nabla\partial_tu_\varepsilon\|_{L^2(\mathbb R^3)}^2+\|\nabla^2v_\varepsilon\|_{L^2(\mathbb R^3)}^2+\|\partial_tv_\varepsilon\|_{L^2(\mathbb R^3)}^2)dt\nonumber\\
\leq&Ce^{\int_0^{T_\varepsilon^1}(\|\nabla u_\varepsilon\|_{L^r}^q+\|v_\varepsilon\|_{L^r}^q)dt}\leq Ce^{L^q}.\label{7.2}
\end{align}
Apply Lemma \ref{LLEM2.2} to obtain
\begin{equation*}
\sup_{0\leq t\leq T_\varepsilon^1}(\|(\nabla u_\varepsilon, \nabla v_\varepsilon)\|_{L^2(\mathbb R^3)}^2+\varepsilon^{-2}\|1-|u_\varepsilon|^2\|_{L^2(\mathbb R^3)}^2)\leq C\|(\nabla u_0, v_0)\|_{L^2(\mathbb R^3)}^2, \label{7.3}
\end{equation*}
which, together with (\ref{7.2}), gives
\begin{equation*}
\sup_{0\leq t\leq T_\varepsilon^1}\|(\nabla^2u_\varepsilon, v_\varepsilon)\|_{H^1}^2+\int_0^{T_\varepsilon^1}(\|(\nabla^3u_\varepsilon, \nabla\partial_tu_\varepsilon)\|_{L^2}^2+\|(\nabla^2v_\varepsilon, \partial_tv_\varepsilon)\|_{L^2}^2)dt\leq N\label{7.4}
\end{equation*}
for some constant $N$ depending only on $L$ and the initial data. By the aid of the above two inequalities, one can use a similar argument as in the proof of \ref{prop} to conclude by using the Gagliado-Nirenberg-Sobolev inequality that
$$
\frac{7}{8}\leq|u_\varepsilon|\leq\frac{9}{8}\quad\text{ on }\mathbb R^3\times[0, T_\varepsilon^1)
$$
for small $\varepsilon$. Recalling the definition of $T_\varepsilon^1$, the above inequality implies $T_\varepsilon^1=T_\varepsilon$, and thus the conclusion holds true.
\end{proof}

Now, let us give the proof of Theorem \ref{thm2}.

\begin {proof} [Proof of Theorem \ref{thm2}]

We first prove the strong convergence and the uniform estimates, which are given in three steps as follows.

Given arbitrary $T\in(0, T^*)$, set
$$
K=\sup_{0\leq t\leq T}\|(\nabla u, v)\|_{H^1(\mathbb R^3)}^2+\int_0^T(\|\nabla^2u\|_2^4+\|\nabla v\|_2^4)dt.
$$
Let $S_0$ be the constant stated in Lemma \ref{lem4.3} and put
$$
M=\|(\nabla u_0, v_0)\|_{H^1(\mathbb R^3)}^2+S_0.
$$

\textbf{Step 1.} In this step, we prove that the strong convergence and estimate hold true up to a time $T_M$.
By Proposition \ref{prop} and Lemma \ref{LLEM2.2}, $(u_\varepsilon, v_\varepsilon)$ can be defined on $\mathbb R^3\times[0, T_M]$ such that
$\frac{7}{8}\leq|u_\varepsilon|\leq\frac{9}{8}$ on $\mathbb R^3\times[0, T_M]$ and
\begin{equation}
\sup_{0\leq t\leq T_M}\|(\nabla u_\varepsilon, v_\varepsilon)\|_{H^1}^2+\int_0^{T_M}\|(\nabla^3u_\varepsilon, \nabla^2v_\varepsilon, \partial_t\nabla u_\varepsilon, \partial_tv_\varepsilon)\|_{L^2}^2dt\leq C(M)\label{6.1}
\end{equation}
for small $\varepsilon$.

Using the same argument as the proof of Theorem \ref{thm1}, we can prove that
\begin{align}
&u_\varepsilon\rightarrow u\quad\mbox{ in }\quad L^2(0, T_M; H^2(B_R(0))),\label{6.2}\\
&v_\varepsilon\rightarrow v\quad\mbox{ in }\quad L^2(0, T_M; H^1(B_R(0)))\label{6.3}
\end{align}
for any $R\in(0, \infty)$. In fact, to prove these convergence, by the aid of the uniqueness of the strong solutions to system (\ref{1.5})--(\ref{1.7}), it suffices to show that any sequence $\{(u_{\varepsilon_i}, v_{\varepsilon_i})\}_{i=1}^\infty$ has convergent subsequence. While such sequentially convergence has already been justified in the proof of Theorem \ref{thm1}.

The aim is to show that
\begin{equation}
\nabla u_\varepsilon\rightarrow\nabla u\quad\mbox{and}\quad v_\varepsilon\rightarrow v\quad\mbox{ in } L^2(0,T_M; H^1(\mathbb R^3)).\label{6.4}
\end{equation}
By the aid of (\ref{6.1})--(\ref{6.3}), using Lemma \ref{lemA} and the Gagliado-Nirenberg-Sobolev inequality, one needs to show that for any $\eta>0$, there is $R>0$, such that
\begin{equation}\label{6.5}
\int_0^{T_M}\int_{\mathbb R^3\setminus B_R(0)}(|\nabla u_\varepsilon|^2+|v_\varepsilon|^2)dxdt\leq\eta.
\end{equation}
Take function $\varphi_0\in C^\infty(\mathbb R)\cap W^{1,\infty}(\mathbb R)$, such that $\varphi_0\equiv0$ on $(-\infty, 1)$, $\varphi_0\equiv1$ on $(2,\infty)$ and $|\varphi'|\leq 2$ on $\mathbb R$. For $R\geq 1$, set $\varphi_R(x)=\varphi_0(\frac{|x|}{R})$, then $\varphi_R(x)\equiv0$ on $B_R(0)$, $\varphi_R\equiv1$ on $\mathbb R^3\setminus B_{2R}(0)$ and $|\nabla\varphi_R|\leq\frac{2}{R}$ on $\mathbb R^3$. For $\varphi=\varphi_R$ in Lemma \ref{lem5.2}, it holds that
\begin{align*}
&\sup_{0\leq t\leq T_M}\int_{\mathbb R^3\setminus B_{2R}(0)}(|v_\varepsilon|^2+|\nabla u_\varepsilon|^2)dx\\
\leq&\int_{\mathbb R^3\setminus B_R(0)}(|v_0|^2+|\nabla u_0|^2)dx+\frac{C}{R}\int_0^{T_M}\int_{\mathbb R^3}(|v_\varepsilon|^4+|\nabla u_\varepsilon|^4\\
&+|p_\varepsilon|^2+|\nabla v_\varepsilon|^2+|\nabla^2u_\varepsilon|^2+|v_\varepsilon|^2+|\nabla u_\varepsilon|^2)dxdt.
\end{align*}
Applying elliptic estimates for the Stokes equations, it follows from equation (\ref{1.10}) that
$$
\int_0^{T_M}\int_{\mathbb R^3}|p_\varepsilon|^2dxdt\leq C\int_0^{T_M}\int_{\mathbb R^3}(|v_\varepsilon|^4+|\nabla u_\varepsilon|^4)dxdt.
$$
Combining the above two inequalities, using the Gagliado-Nirenber-Sobolev inequality and the absolute continuity of integrals, one obtains from (\ref{6.1}) that
\begin{align*}
&\sup_{0\leq t\leq T_M}\int_{\mathbb R^3\setminus B_{2R}(0)}(|v_\varepsilon|^2+|\nabla u_\varepsilon|^2)dx\\
\leq&\int_{\mathbb R^3\setminus B_R(0)}(|v_0|^2+|\nabla u_0|^2)dx+\frac{C}{R}\int_0^{T_M}(\|v_\varepsilon\|_{H^1}^4+\|\nabla u_\varepsilon\|_{H^1}^4+1)dt\\
\leq&\frac{\eta}{2}+\frac{C}{R}T_MC(M)\leq\eta
\end{align*}
for large $R$, which shows (\ref{6.5}) and thus (\ref{6.4}).

Next we prove
\begin{equation}
(\nabla u_\varepsilon, v_\varepsilon)\rightarrow(\nabla u, v)\quad\mbox{ in }L^\infty(0, T_M; L^2(\mathbb R^3)). \label{6.11}
\end{equation}
Due to (\ref{6.1}), it suffices to show that each sequence $(u_{\varepsilon_i}, v_{\varepsilon_i})$ has an convergent subsequence in $L^\infty(0, T; L^2(\mathbb R^3))$. Let $(u_{\varepsilon_i}, v_{\varepsilon_i})$ be a sequence. By (\ref{6.1}), there is a subsequence, still denoted by $(u_{\varepsilon_i}, v_{\varepsilon_i})$, such that
\begin{equation}
\lim_{i\rightarrow\infty}\|(\nabla u_\varepsilon(\cdot, t)-\nabla u(\cdot, t), v_\varepsilon(\cdot, t)-v(\cdot, t)\|_{H^1(\mathbb R^3)}=0,\quad \text{ for a.e. }t\in[0, T_M]. \label{6.12}
\end{equation}
By (\ref{6.1}), it holds that
\begin{align*}
&\|\nabla u_\varepsilon(\cdot, t)-\nabla u_\varepsilon(\cdot, s)\|_{L^2(\mathbb R^3)}=\left\|\int_s^t\partial_t\nabla u_\varepsilon(\tau)d\tau\right\|_{L^2(\mathbb R^3)}\\
\leq&\int_s^t\left\|\partial_t\nabla u_\varepsilon(\tau)\right\|_{L^2(\mathbb R^3)}d\tau\leq(t-s)^{1/2}\int_s^t\left\|\partial_t\nabla u_\varepsilon(\tau)\right\|_{L^2(\mathbb R^3)}^2d\tau\\
\leq& C(M)(t-s)^{1/2},
\end{align*}
and similarly
$$
\|v_\varepsilon(\cdot, t)-v_\varepsilon(\cdot, s)\|_{L^2(\mathbb R^3)}\leq C(M)(t-s)^{1/2}.
$$
By the aid of the two inequalities above and (\ref{6.12}), one can prove easily (\ref{6.11}) by a density argument. This completes the proof of Step 1.

\textbf{Step 2.} In this step, we prove that if the strong convergence and uniform estimate hold true up to time $T_1$ with $T_1<T$, then they hold true up to time $T_2:=\min\{T, T_1+T_M\}$. Suppose that $\frac{7}{8}\leq|u_\varepsilon|\leq\frac{9}{8}$ on $\mathbb R^3\times[0, T_1]$,
\begin{equation*}
\sup_{0\leq t\leq T_1}\|(\nabla u_\varepsilon, v_\varepsilon)\|_{H^1}^2+\int_0^{T_1}\|(\nabla^3u_\varepsilon, \nabla^2v_\varepsilon, \partial_t\nabla u_\varepsilon, \partial_tv_\varepsilon)\|_{L^2}^2dt\leq C(M)
\end{equation*}
and
$$
(\nabla u_\varepsilon, v_\varepsilon)\rightarrow(\nabla u, v)\quad\text{in }L^\infty(0, T_1; L^2(\mathbb R^3))\cap L^2(0, T_1; H^1(\mathbb R^3))
$$
for some $T_1<T$.
Due to the above two inequality, we apply Lemma \ref{lem4.3} to conclude that
$$
\sup_{0\leq t\leq T_1}\left(\|(\nabla u_\varepsilon, v_\varepsilon)\|_{H^1(\mathbb R^3)}^2+\left\|\partial_tu_\varepsilon\right\|_{L^2(\mathbb R^3)}^2\right)\leq S_0,
$$
which, using equation (\ref{1.12}), gives
$$
\sup_{0\leq t\leq T_1}\left(\|(\nabla u_\varepsilon, v_\varepsilon)\|_{H^1(\mathbb R^3)}^2+\left\|\mathcal Q_\varepsilon(u_\varepsilon, v_\varepsilon)\right\|_{L^2(\mathbb R^3)}^2\right)\leq S_0\leq M
$$
for small $\varepsilon$.
Recall that $\frac{7}{8}\leq|u_\varepsilon|\leq\frac{9}{8}$ on $\mathbb R^3\times[0, T_1]$, starting from time $T_1$ and taking $(u_\varepsilon(T_1), v_\varepsilon(T_1))$ as the initial data, we can apply Proposition \ref{prop} again to extend $(u_\varepsilon, v_\varepsilon)$ to time $t_2:=\min\{T_1+T_M, T\}$, such that
$\frac{7}{8}\leq|u_\varepsilon|\leq\frac{9}{8}$ on $\mathbb R^3\times[0, T_2]$ and
\begin{equation*}
\sup_{0\leq t\leq T_2}\|(\nabla u_\varepsilon, v_\varepsilon)\|_{H^1}^2+\int_0^{T_2}\|(\nabla^3u_\varepsilon, \nabla^2v_\varepsilon, \partial_t\nabla u_\varepsilon, \partial_tv_\varepsilon)\|_{L^2}^2dt\leq C(M)
\end{equation*}
for small $\varepsilon$. Using the same argument as Step 1, we can prove the strong convergence up to time $T_2$, that is
\begin{equation*}
\nabla u_\varepsilon\rightarrow\nabla u\quad\mbox{and}\quad v_\varepsilon\rightarrow v\quad\mbox{ in } L^2(0, T_2; H^1(\mathbb R^3))\cap L^\infty(0, T_2; L^2(\mathbb R^3)).
\end{equation*}
This completes the proof of Step 2.

\textbf{Step 3.} Combining Step 1 and Step 2, one can easily prove that the strong convergence and uniform estimate hold true for any time $T<T^*$.

Now, we turn to the proof of the characterization of the maximal existence time. Suppose that $T^*<\infty$ is the maximal existence time of the strong solution $(u, v)$ to the system (\ref{1.5})--(\ref{1.9}). We prove that
\begin{equation}
\varlimsup_{\varepsilon\rightarrow0}\|(\nabla u_\varepsilon, v_\varepsilon)\|_{L^q(0, T^*; L^r(\mathbb R^3))}=\infty\label{7.5}
\end{equation}
for any $(q, r)\in \mathcal O$. Suppose, by contradiction, that the above is not true. Then there is some $(q, r)\in\mathcal O$ and a sequence $\varepsilon_i\rightarrow0$, such that
$$
\|(\nabla u_{\varepsilon_i}, v_{\varepsilon_i})\|_{L^q(0, T^*; L^r(\mathbb R^3))}\leq L
$$
for a positive number $L$. By Lemma \ref{lem4.4}, there is a positive constant $N$ depending only on $L$ and the initial data, such that
\begin{equation*}
\sup_{0\leq t\leq T^*}\|(\nabla^2u_{\varepsilon_i}, v_{\varepsilon_i})\|_{H^1}^2+\int_0^{T^*}(\|(\nabla^3u_{\varepsilon_i}, \nabla\partial_tu_{\varepsilon_i})\|_{L^2}^2+\|(\nabla^2v_{\varepsilon_i}, \partial_tv_{\varepsilon_i})\|_{L^2}^2)dt\leq N.
\end{equation*}
Due to this estimates, using the same argument to the proof of Theorem \ref{thm1} in Section \ref{sec2}, a subsequence of $(u_{\varepsilon_i}, v_{\varepsilon_i})$ converges to $(u, v)$ and
\begin{equation*}
\sup_{0\leq t\leq T^*}\|(\nabla^2u, v)\|_{H^1}^2+\int_0^{T^*}(\|(\nabla^3u, \nabla\partial_tu)\|_{L^2}^2+\|(\nabla^2v, \partial_tv)\|_{L^2}^2)dt\leq N.
\end{equation*}
As a result, by Theorem \ref{thm1}, we can extend the strong solution $(u, v)$ beyond $T^*$, contradicting to the definition of $T^*$. This contradiction implies that (\ref{7.5}) holds true.

Now we prove that (\ref{7.5}) implies that $T^*$ is the maximal existence time. Suppose, by contradiction, that $T^*$ is not the maximal existence time. By what we proved in the strong convergence and uniform estimates, we have
$$
\varlimsup_{\varepsilon\rightarrow0}\|(\nabla u_\varepsilon, v_\varepsilon)\|_{L^\infty(0, T^*; H^1(\mathbb R^3))\cap W^{2,1}_2(\mathbb R^3\times[0, T^*])}\leq M,
$$
for a positive constant $M$. It follows from the Gagliado-Nirenberg-Sobolev inequality and the above inequality that
\begin{align*}
&\|(\nabla u_\varepsilon, v_\varepsilon)\|_{L^q(0, T^*; L^r(\mathbb R^3))}=\left(\int_0^{T^*}\|(\nabla u_\varepsilon, v_\varepsilon)\|_{L^r(\mathbb R^3)}^q\right)^{1/q}\\
\leq&C\left(\int_0^{T^*}\|(\nabla u_\varepsilon, v_\varepsilon)\|_{H^1(\mathbb R^3)}^q\right)^{1/q}\leq C\sup_{0\leq t\leq T^*}\|(\nabla u_\varepsilon, v_\varepsilon)\|_{H^1(\mathbb R^3)}{T^*}^{1/q}\leq CM{T^*}^{1/q}
\end{align*}
for $r\in(3, q]$, and
\begin{align*}
&\|(\nabla u_\varepsilon, v_\varepsilon)\|_{L^q(0, T^*; L^r(\mathbb R^3))}=\left(\int_0^{T^*}\|(\nabla u_\varepsilon, v_\varepsilon)\|_{L^r(\mathbb R^3)}^q\right)^{1/q}\\
\leq&C\left(\int_0^{T^*}\|(\nabla^2 u_\varepsilon, \nabla v_\varepsilon)\|_{L^2(\mathbb R^3)}^{(\frac{1}{2}+\frac{3}{r})q}\|(\nabla^3 u_\varepsilon, \nabla^2 v_\varepsilon)\|_{L^2(\mathbb R^3)}^{(\frac{1}{2}-\frac{3}{r})q}dt\right)^{1/q} \\
=&C\left(\int_0^{T^*}\|(\nabla^2 u_\varepsilon, \nabla v_\varepsilon)\|_{L^2(\mathbb R^3)}^{\frac{3q}{2}-2}\|(\nabla^3 u_\varepsilon, \nabla^2 v_\varepsilon)\|_{L^2(\mathbb R^3)}^{2-\frac{q}{2}}dt\right)^{1/q}\\
\leq&C\sup_{0\leq t\leq T^*}\|(\nabla^2 u_\varepsilon, \nabla v_\varepsilon)\|_{L^2(\mathbb R^3)}^{\frac{3}{2}-\frac{2}{q}}\left(\int_0^{T^*}\|(\nabla^3 u_\varepsilon, \nabla^2 v_\varepsilon)\|_{L^2(\mathbb R^3)}^{2-\frac{q}{2}}dt\right)^{1/q}\\
\leq&CM^{\frac{3}{2}-\frac{2}{q}}{T^*}^{\frac{1}{4}}\left(\int_0^{T^*}\|(\nabla^3 u_\varepsilon, \nabla^2 v_\varepsilon)\|_{L^2(\mathbb R^3)}^2dt\right)^{\frac{1}{q}-\frac{1}{4}}\\
\leq&CM^{\frac{3}{2}-\frac{2}{q}}{T^*}^{\frac{1}{4}}M^{\frac{1}{q}-\frac{1}{4}}=CM{T^*}^{\frac{1}{4}}
\end{align*}
for $r\in(6,\infty]$. Due to the above two inequalities, we have
$$
\varlimsup_{\varepsilon\rightarrow0}\|(\nabla u_\varepsilon, v_\varepsilon)\|_{L^q(0, T^*; L^r(\mathbb R^3))}<\infty,
$$
contradicting to (\ref{7.5}). This contradiction implies that $T^*$ must be the maximal existence time, completing the proof.
\end{proof}

\begin{acknowledgement}
{The research of the third author is supported
partially by Zheng Ge Ru Foundation, Hong Kong RGC Earmarked
Research Grant CUHK4041/11P, a Focus Area Grant from The
Chinese University of Hong Kong, and a grant from Croucher Foundation. A part of the work was done when
the first author visited the Chinese University of Hong Kong in Dec of 2012
 and in Jan of 2013.}
\end{acknowledgement}

\end{document}